\documentclass[11pt]{amsart} \textwidth=14.5cm \oddsidemargin=1cm
\evensidemargin=1cm
\usepackage{amsmath}
\usepackage{amsxtra}
\usepackage{amscd}
\usepackage{amsthm}
\usepackage{amsfonts}
\usepackage{amssymb}
\usepackage{eucal}
\usepackage{pmgraph}
\usepackage{stmaryrd}
\usepackage[usenames,dvipsnames]{color}
%\usepackage{color}
%\font \srm=cmr5

\input prepictex
\input pictex
\input postpictex
%\usepackage{mathptm}

%Numbering methods:
%\newtheorem{thm}{Theorem}[section]
%\newtheorem{lem}{Lemma}[section]
%\newtheorem{cor}{Corollary}[section]
%\newtheorem{prop}{Proposition}[section]

\newtheorem{thm}{Theorem}[section]
\newtheorem{cor}[thm]{Corollary}
\newtheorem{lem}[thm]{Lemma}
\newtheorem{prop}[thm]{Proposition}

\theoremstyle{definition}

\newtheorem{example}[thm]{Example}

\theoremstyle{remark}
\newtheorem{rem}[thm]{Remark}

\numberwithin{equation}{section}

\begin{document}

%Referring commands:
\newcommand{\thmref}[1]{Theorem~\ref{#1}}
\newcommand{\secref}[1]{Section~\ref{#1}}
\newcommand{\lemref}[1]{Lemma~\ref{#1}}
\newcommand{\propref}[1]{Proposition~\ref{#1}}
\newcommand{\corref}[1]{Corollary~\ref{#1}}
\newcommand{\remref}[1]{Remark~\ref{#1}}
\newcommand{\eqnref}[1]{(\ref{#1})}
\newcommand{\exref}[1]{Example~\ref{#1}}

%Simplified symbols:
\newcommand{\nc}{\newcommand}
\nc{\Z}{{\mathbb Z}}
\nc{\C}{{\mathbb C}}
\nc{\N}{{\mathbb N}}
\nc{\F}{{\mf F}}
\nc{\Q}{\ol{Q}}
\nc{\la}{\lambda}
\nc{\ep}{\epsilon}
\nc{\h}{\mathfrak h}
\nc{\n}{\mf n}
\nc{\G}{{\mathfrak g}}
\nc{\DG}{\widetilde{\mathfrak g}}
\nc{\SG}{\overline{\mathfrak g}}
\nc{\D}{\mc D} \nc{\Li}{{\mc L}} \nc{\La}{\Lambda} \nc{\is}{{\mathbf
i}} \nc{\V}{\mf V} \nc{\bi}{\bibitem} \nc{\NS}{\mf N}
\nc{\dt}{\mathord{\hbox{${\frac{d}{d t}}$}}} \nc{\E}{\mc E}
\nc{\ba}{\tilde{\pa}} \nc{\half}{\frac{1}{2}} \nc{\mc}{\mathcal}
\nc{\mf}{\mathfrak} \nc{\hf}{\frac{1}{2}}
\nc{\hgl}{\widehat{\mathfrak{gl}}} \nc{\gl}{{\mathfrak{gl}}}
\nc{\hz}{\hf+\Z}
\nc{\dinfty}{{\infty\vert\infty}} \nc{\SLa}{\overline{\Lambda}}
\nc{\SF}{\overline{\mathfrak F}} \nc{\SP}{\overline{\mathcal P}}
\nc{\U}{\mathfrak u} \nc{\SU}{\overline{\mathfrak u}}
\nc{\ov}{\overline}
\nc{\wt}{\widetilde}
\nc{\osp}{\mf{osp}}
\nc{\spo}{\mf{spo}}
\nc{\hosp}{\widehat{\mf{osp}}}
\nc{\hspo}{\widehat{\mf{spo}}}
\nc{\I}{\mathbb{I}}
\nc{\X}{\mathbb{X}}
\nc{\Y}{\mathbb{Y}}
\nc{\hh}{\widehat{\mf{h}}}
\nc{\cc}{{\mathfrak c}}
\nc{\dd}{{\mathfrak d}}
\nc{\aaa}{{\mf A}}
\nc{\xx}{{\mf x}}
\nc{\wty}{\widetilde{\mathbb Y}}
\nc{\ovy}{\overline{\mathbb Y}}
\nc{\vep}{\bar{\epsilon}}

\advance\headheight by 2pt

\title[Super duality and ortho-symplectic Lie superalgebras]
{Super duality and irreducible characters of ortho-symplectic Lie
superalgebras}

\author[Cheng]{Shun-Jen Cheng}
\address{Institute of Mathematics, Academia Sinica, Taipei,
Taiwan 10617} \email{chengsj@math.sinica.edu.tw}

\author[Lam]{Ngau Lam}
\address{Department of Mathematics, National Cheng-Kung University, Tainan, Taiwan 70101}
\email{nlam@mail.ncku.edu.tw}

\author[Wang]{Weiqiang Wang}
\address{Department of Mathematics, University of Virginia, Charlottesville, VA 22904}
\email{ww9c@virginia.edu}

\begin{abstract}
We formulate and establish a super duality which connects parabolic categories $O$ for
the ortho-symplectic Lie superalgebras and classical Lie algebras of $BCD$ types. This
provides a complete and conceptual solution of the irreducible character problem for the
ortho-symplectic Lie superalgebras in a parabolic category $O$, which includes all finite
dimensional irreducible modules, in terms of classical Kazhdan-Lusztig polynomials.
\end{abstract}
%\noindent{\bf Key words:} Lie superalgebra, differential
%operators, free field realization, Howe duality.

%\vspace{.3cm}

%\noindent{\bf Mathematics Subject Classifications (1991)}: 17B67.

 \maketitle

  \setcounter{tocdepth}{1}
\tableofcontents

\section{Introduction}

\subsection{}
\label{sec:LA}

Finding the irreducible characters is a fundamental problem in
representation theory. As a prototype of this problem, consider a
complex semisimple Lie algebra $\G$. The problem is solved in two
steps, following the historical development:
\begin{enumerate}
\item The category of finite dimensional $\G$-modules is semisimple and the
    corresponding irreducible $\G$-characters are given by Weyl's character formula.

\item A general solution to the irreducible character problem in
the BGG category $O$ was given much later by the Kazhdan-Lusztig
(KL) polynomials (theorems of Beilinson-Bernstein and
Brylinski-Kashiwara) \cite{KL, BB, BK}.
\end{enumerate}
%The KL polynomials afford
%a geometric interpretation in terms of Schubert varieties \cite{KL2} and
%a homological interpretation in terms of $\mf u$-homology \cite{V}.

%
%
\subsection{}
\label{sec:LSA}

The study of Lie superalgebras and their representations was largely motivated by the
notion of supersymmetry in physics. A Killing-Cartan type classification of finite
dimensional complex simple Lie superalgebras was achieved by Kac \cite{K1} in 1977. The
most important subclass of the simple Lie superalgebras (called basic classical),
including two infinite series of types $A$ and $\osp$, bears a close resemblance to the
usual simple Lie algebras, so we can make sense of root systems, Dynkin diagrams,
triangular decomposition, Cartan and Borel subalgebras, (parabolic) category $O$, and so
on.

However, the representation theory of Lie superalgebras $\SG$ has encountered several
substantial difficulties, as made clear by numerous works over the last three decades
(cf. \cite{BL, K2, KW, LSS, Pen, dHKT, dJ} as a sample of earlier literature, and some
more recent references which can be found in the next paragraph):
\begin{enumerate}
\item There exist non-conjugate Borel subalgebras for a given Lie
superalgebra $\SG$.

\item The category $\mc F$ of finite dimensional $\SG$-modules is in general not
    semisimple. A uniform Weyl type finite dimensional character formula does not
    exist.

\item One has a notion of a Weyl group associated to the even subalgebra of $\SG$;
    however, the linkage in the category $O$ (or in $\mc F$) of $\SG$-modules is not
    solely controlled by the Weyl group.

\item A block in the category $O$ (or in $\mc F$) may contain
infinitely many simple objects.
%
%\item The geometry approach to representation theory is largely
%missing. There is no adequate development such as null cone,
%Springer theory, intersection theory for Schubert supervarieties.
\end{enumerate}
The conventional wisdom of solving the irreducible character
problem for Lie superalgebras has been to follow closely the two
steps for Lie algebras in \ref{sec:LA}. As the problem is already
very difficult in the category $\mc F$, there has been little
attempt in understanding the category $O$.

%
%
%\subsection{}
For type $A$ Lie superalgebra $\gl(m|n)$, there have  been several different general
approaches over the years. Serganova \cite{Se} in 1996 developed a mixed geometric and
algebraic approach to solving the irreducible character problem in the category $\mc F$.
Brundan in 2003 \cite{B} developed a new elegant purely algebraic solution to the same
problem in $\mc F$ using Lusztig-Kashiwara canonical basis. Developing the idea of super
duality \cite{CW2} (which generalizes \cite{CWZ}) which connects the categories $O$ for
Lie superalgebras and Lie algebras of type $A$ for the first time, two of the authors
\cite{CL} very recently established the super duality conjecture therein. In particular
they provided a complete solution to the irreducible character problem for a fairly
general parabolic category $O$ (including $\mc F$ as a very special case) in terms of KL
polynomials of  type $A$. Independently, Brundan and Stroppel \cite{BS} proved the super
duality conjecture  in \cite{CWZ}, offering yet another solution of the irreducible
character problem in $\mc F$.

\subsection{}
The goal of this paper is to formulate and establish a super duality which connects
parabolic category $O$ for Lie superalgebra of type $\osp$ with parabolic category $O$
for classical Lie algebras of types $BCD$, vastly generalizing the type $A$ case of \cite{CWZ,
CW2, CL}. In particular, it provides a complete solution of the irreducible character
problem for $\osp$ in some suitable parabolic category $O$, which includes all finite
dimensional irreducibles, in terms of parabolic KL polynomials of $BCD$ types
(cf.~Deodhar \cite{Deo}).

\subsection{}

Before launching on a detailed explanation of our main ideas below, it is helpful to keep
in mind the analogy that the ring of symmetric functions (or its super counterpart) in
infinitely many variables carries more symmetries than in finitely many variables, and a
truncation process can easily recover finitely many variables. The super duality can be
morally thought as a categorification of the standard involution $\omega$ on the ring of
symmetric functions and it only becomes manifest when the underlying Lie (super)algebras
pass to infinite rank. Then truncation functors can be used to recover the finite rank
cases which we are originally interested in.

Even though the finite dimensional Lie superalgebras of type $\osp$ depend on two
integers $m$ and $n$, our view is to fix one and let the other, say $n$, vary, and so let
us denote an $\osp$ Lie superalgebra by $\SG_n$. By choosing appropriately a Borel and a
Levi subalgebra of $\SG_n$, we formulate a suitable parabolic category $\ov{\mc{O}}_n$ of
$\SG_n$-modules. It turns out that there are four natural choices one can make here which
correspond to the four Dynkin diagrams $\mf{b,b^\bullet,c,d}$ in \ref{dynkin} (the type
$\mf a$ case has been treated in \cite{CL}). There is a natural sequence of inclusions of
Lie superalgebras:
$$
\SG_1\subset \SG_2 \subset \ldots \subset \SG_n \subset \ldots.
$$
Let $\SG :=\cup_{n=1}^\infty \SG_n$. A suitable category $\ov{\mc{O}}$ of $\SG$-modules
can be identified with the inverse limit $\lim\limits_{\longleftarrow} \ov{\mc{O}}_n$. On
the other hand, we introduce truncation functors
$\mf{tr}^{k}_n:\ov{\mc{O}}_k\rightarrow\ov{\mc{O}}_n$ for $k>n$, as analogues of the
truncation functors studied in algebraic group setting ( cf. Donkin \cite{Don}). These
truncation functors send parabolic Verma modules to parabolic Vermas or zero and
irreducibles to irreducibles or zero. In particular, this allows us to derive the
irreducible characters in $\ov{\mc{O}}_n$ once we know those in $\ov{\mc O}$.

Corresponding to each of the above choices of $\SG_n$ and $\ov{\mc O}_n$, we have the Lie
algebra counterparts $\G_n$ and parabolic categories ${\mc{O}}_n$ for positive integers
$n$. Moreover, we have natural inclusions of Lie algebras $\G_n \subset \G_{n+1}$, for
all $n$, which allow us to define the Lie algebra $\G :=\cup_n \G_n$ and the parabolic
category ${\mc{O}}$ of $\G$-modules. Similarly, the category ${\mc{O}}$ can be identified
with the inverse limit $\lim\limits_{\longleftarrow} {\mc{O}}_n$. In the main body of the
paper, we actually replace $\G, \SG$ et cetera by their (trivial) central extensions. The
reason is that the truncation functors depend implicitly on a stabilization scalar, which
is interpreted conceptually as a level of representations with respect to the central
extensions.

To establish a connection between $\ov{\mc{O}}$ and ${\mc{O}}$, we introduce another
infinite rank Lie superalgebra $\wt \G$ and its parabolic category $\wt{\mc{O}}$. The Lie
superalgebra $\wt \G$ contains $\G$ and $\SG$ as natural subalgebras (though not as Levi
subalgebras), and this enables us to introduce two natural functors $T: \wt{\mc{O}}
\rightarrow \mc O$ and  $\ov{T}: \wt{\mc{O}} \rightarrow \ov{\mc O}$. Using the technique
of odd reflections among others, we establish in \secref{sec:character} a key property
that $T$ and $\ov T$ respect the parabolic Verma and irreducible modules, respectively.
This result is already sufficient to provide a complete solution of irreducible
$\osp$-characters in the category $\ov{\mc{O}}_n$ in the first half of the paper (by the
end of Section~\ref{sec:T}). We remark that the idea of introducing an auxiliary Lie
superalgebra $\wt \G$ and category $\wt{\mc O}$ has been used in the type $A$
superalgebra setting \cite{CL}.

Recall that for the usual category $O$ of Lie algebras, the KL polynomials were
interpreted by Vogan \cite{V} in terms of Kostant $\mf u$-homology groups. The $\mf
u$-homology groups make perfect sense for Lie superalgebras, and we may take this
interpretation as the definition for the otherwise undefined KL polynomials in category
$O$ for Lie superalgebras (cf.~Section \ref{sec:LSA}~(3)), as Serganova \cite{Se} did in
the category $\mc F$ of $\gl (m|n)$-modules. In Section~\ref{sec:homology}, we show that
the functors $T$ and $\ov T$ match the corresponding $\mf u$-homology groups and hence
the corresponding KL polynomials (compare \cite{CKW}). Actually the computation in
\cite{CKW} of the $\mf u$-homology groups with coefficients in the Lie superalgebra
oscillator modules via Howe duality was the first direct supporting evidence for the
super duality for $\osp$ as formulated in this paper.

Section~\ref{sec:category} of the paper is devoted to proving that both $T$ and $\ov T$
are indeed category equivalences. As a consequence, we have established that the
categories $\mc O$ and $\ov{\mc O}$ are equivalent, which is called super duality. A
technical difference here from \cite{CL} is that we need to deal with the fact that
parabolic Verma modules in $\ov{\mc O}$ may not have finite composition series. An
immediate corollary of the super duality is that any known BGG resolution in the category
$\mc O$ gives rise to a BGG type resolution in the category $\ov{\mc O}$, and vice versa.

\subsection{}
The finite dimensional irreducible $\osp$-modules are of highest weight, and they are
classified in terms of the Dynkin labels by Kac \cite{K2}. We note that the finite
dimensional irreducible modules of non-integral highest weights are typical and so their
characters are known \cite[Theorem 1]{K2}. It turns out that a more natural labeling of
the remaining finite dimensional irreducible $\osp$-modules (of integral highest weights)
is given in terms of Young diagrams just as for classical Lie algebras (see e.g.
\cite{SW} for such a formulation and a new proof using odd reflections).

As Borel subalgebras are not conjugate to each other, it becomes a nontrivial problem to
find the extremal weights, i.e.~highest weights with respect to different Borel, of a
given finite dimensional irreducible $\osp$-module. We provide an elegant and simple
answer in terms of a combinatorial notion which we call {\em block Frobenius coordinates}
associated to Young diagrams.

We observe that our solution of the irreducible character problem in $\ov{\mc O}_n$
includes solutions to all finite dimensional irreducible $\osp$-characters.

The category $\mc F$ for a general $\osp$ (with the exception of $\osp(2|2n)$) is not a
highest weight category and does not admit an abstract KL theory in the sense of Cline,
Parshall and Scott \cite{CPS}, as indicated in the case of $\osp(3|2)$
\cite[Section~2]{Ger}. For a completely independent and different approach to the finite
dimensional irreducible $\osp$-characters in the category $\mc F$, see Gruson and
Serganova \cite{GS}. The finite dimensional irreducible characters of $\osp(2|2n)$ were
obtained in \cite{dJ}. The finite dimensional irreducible characters of $\osp(k|2)$ were
also computed in \cite{SZ}.

\subsection{}

In hindsight, here is how our super duality approach overcomes the
difficulties as listed in \ref{sec:LSA}.
\begin{enumerate}
\item The existence of non-conjugate Borel subalgebras for a Lie
superalgebra is essential for establishing the properties of the
functors $T$ and $\ov T$. Choices of suitable Borel subalgebras
are crucial for a formulation of the compatible sequence of
categories $\ov{\mc O}_n$ for $n>0$.

\item The category $\mc F$ of finite dimensional $\SG_n$-modules does not play any
    special role in our approach. Even the ``natural" $\osp(M|2n)$-modules
    $\C^{M|2n}$ do not correspond well with each other under truncation functors, as
    they are natural with respect to the ``wrong" Borel.

\item In the $n \to \infty$ limit, the linkage in the category
$\ov{\mc O}$ of $\SG$-modules is completely controlled by the Weyl
group of the corresponding Lie algebra $\G$ (which contains the
even subalgebra of $\SG$ as a subalgebra).

\item In the $n \to \infty$ limit, it is no surprise for a block
to contain infinitely many simple objects.
%
%\item The super duality suggests a possible categorical
%connections between super geometry and infinite dimensional geometry.
\end{enumerate}

In the extreme cases described in \secref{sec:m=0}, we indeed obtain an equivalence of
module categories between two classical (non-super!) Lie algebras of types $C$ and $D$ of
infinite rank at opposite levels. If one is willing to regard $\osp(1|\infty)$ as
classical (recall that the finite dimensional $\osp(1|2n)$-module category is
semisimple), there is another equivalence of categories which relates $\osp(1|\infty)$ to
the infinite rank Lie algebra of type $B$. In this sense, our super duality has a flavor
of the Langlands duality.

The super duality approach here can be further adapted to the
setting of Kac-Moody superalgebras (including affine
superalgebras) and this will shed new light on the irreducible
character problem for these superalgebras. The details will appear
elsewhere.

It is well known that the proof of KL conjectures involves deep geometric machinery and
results on $D$-modules of flag manifolds. The formulation of super duality suggests
potential direct connections on the (super) geometric level behind the categories $\mc O$
and $\ov{\mc O}$, which will be very important to develop.

\subsection{}

The paper is organized as follows. In \secref{sec:superalgebras} the Lie superalgebras
$\G$, $\SG$ and $\DG$ are defined, with their respective module categories ${\mc{O}}$,
$\ov{\mc{O}}$ and $\wt{\mc{O}}$ introduced in \secref{sec:O}. In \secref{sec:character},
we provide a complete solution of the irreducible $\osp$ character problem in category
$\ov{\mc O}_n$ for all $n$, including all finite dimensional irreducible
$\osp$-characters, in terms of the KL polynomials of classical type.  We establish in
\secref{sec:category} equivalence of the categories $\mc O$ and $\ov{\mc O}$.
\secref{finite:dim:repn} offers a diagrammatic description of the extremal weights of the
finite dimensional irreducible $\osp$-modules.

%\iffalse
Throughout the paper the symbols $\Z$, $\N$, and $\Z_+$ stand for the sets of all,
positive and non-negative integers, respectively. All vector spaces, algebras, tensor
products, et cetera, are over the field of complex numbers $\C$.

%\fi

{\bf Acknowledgment.} The first author is partially supported by
an NSC-grant and an Academia Sinica Investigator grant, and he
thanks NCTS/TPE and the Department of Mathematics of University of
Virginia for hospitality and support. The second author is
partially supported by an NSC-grant and
thanks NCTS/SOUTH. The third author is partially
supported by NSF and NSA grants, and he thanks the Institute of
Mathematics of Academia Sinica in Taiwan for hospitality and
support. The results of the paper were announced by the first
author in the AMS meeting at Raleigh in April 2009, and they were
presented by the third author in conferences at Ottawa, Canada and
Durham, UK in July 2009.

\section{Lie superalgebras of infinite rank}\label{sec:superalgebras}

In this section, we introduce infinite rank Lie (super)algebras $\G^\xx$, $\SG^\xx$ and
$\wt{\G}^\xx$ associated to the $3$ Dynkin diagrams in \eqref{Dynkin:combined} below,
where $\xx$ denotes one of the four types $\mf{b,b^\bullet,c,d}$.

\subsection{Dynkin diagrams of $\G^\xx$, $\ov{\G}^\xx$ and $\wt{\G}^\xx$}
\label{dynkin}

Let $m\in\Z_+$. Consider the free abelian group with basis
$\{\epsilon_{-m},\ldots,\epsilon_{-1}\}\cup\{\epsilon_{r}\vert
r\in\hf\N\}$, with a symmetric bilinear form $(\cdot|\cdot)$
given by
\begin{align*}
(\epsilon_r|\epsilon_s)=(-1)^{2r}\delta_{rs}, \qquad r,s \in
\{-m,\ldots,-1\} \cup \hf \N.
\end{align*}
We set
\begin{align}\label{alpha:beta}
&\alpha_{\times}:=\epsilon_{-1}-\epsilon_{1/2},
 \quad\alpha_{j}
:=\epsilon_{j}-\epsilon_{j+1},\quad -m\le j\le -2,\\
&\beta_{\times}:=\epsilon_{-1}-\epsilon_{1},\quad
\alpha_{r}:=\epsilon_{r}-\epsilon_{r+1/2},\quad \beta_{r}
:=\epsilon_{r}-\epsilon_{r+1},\quad r\in\hf\N.\nonumber
\end{align}

For $\xx =\mf{b,b^\bullet,c,d}$, we denote by $\mf{k}^\xx$ the contragredient Lie
(super)algebras (\cite[Section 2.5]{K1}) whose Dynkin diagrams
\makebox(23,0){$\oval(20,12)$}\makebox(-20,8){$\mf{k}^\xx$} together with certain
distinguished sets of simple roots $\Pi(\mf{k^x})$ are listed as follows: \vspace{.3cm}

\iffalse
\begin{center}
\hskip -3cm \setlength{\unitlength}{0.16in}
\begin{picture}(24,3)
\put(8,2){\makebox(0,0)[c]{$\bigcirc$}}
\put(10.4,2){\makebox(0,0)[c]{$\bigcirc$}}
\put(14.85,2){\makebox(0,0)[c]{$\bigcirc$}}
\put(17.25,2){\makebox(0,0)[c]{$\bigcirc$}}
\put(19.4,2){\makebox(0,0)[c]{$\bigcirc$}}
\put(5.6,2){\makebox(0,0)[c]{$\bigcirc$}}
\put(8.4,2){\line(1,0){1.55}} \put(10.82,2){\line(1,0){0.8}}
\put(13.2,2){\line(1,0){1.2}} \put(15.28,2){\line(1,0){1.45}}
\put(17.7,2){\line(1,0){1.25}}
\put(6,2){\line(1,0){1.4}}
\put(12.5,1.95){\makebox(0,0)[c]{$\cdots$}}
\put(-.5,2){\makebox(0,0)[c]{$\mf{a}$:}}
\put(5.5,1){\makebox(0,0)[c]{\tiny$\alpha_{-m}$}}
\put(8,1){\makebox(0,0)[c]{\tiny$\alpha_{-m+1}$}}
\put(17.2,1){\makebox(0,0)[c]{\tiny$\alpha_{-3}$}}
\put(19.3,1){\makebox(0,0)[c]{\tiny$\alpha_{-2}$}}
\end{picture}
\end{center}
\fi

\begin{center}
\hskip -3cm \setlength{\unitlength}{0.16in}
\begin{picture}(24,2)
\put(8,2){\makebox(0,0)[c]{$\bigcirc$}}
\put(10.4,2){\makebox(0,0)[c]{$\bigcirc$}}
\put(14.85,2){\makebox(0,0)[c]{$\bigcirc$}}
\put(17.25,2){\makebox(0,0)[c]{$\bigcirc$}}
\put(19.4,2){\makebox(0,0)[c]{$\bigcirc$}}
\put(5.6,2){\makebox(0,0)[c]{$\bigcirc$}}
\put(8.4,2){\line(1,0){1.55}} \put(10.82,2){\line(1,0){0.8}}
\put(13.2,2){\line(1,0){1.2}} \put(15.28,2){\line(1,0){1.45}}
\put(17.7,2){\line(1,0){1.25}}
\put(6,1.8){$\Longleftarrow$}
\put(12.5,1.95){\makebox(0,0)[c]{$\cdots$}}
\put(-.5,2){\makebox(0,0)[c]{$\mf{b}$:}}
\put(5.5,1){\makebox(0,0)[c]{\tiny$-\epsilon_{-m}$}}
\put(8,1){\makebox(0,0)[c]{\tiny$\alpha_{-m}$}}
\put(17.2,1){\makebox(0,0)[c]{\tiny$\alpha_{-3}$}}
\put(19.3,1){\makebox(0,0)[c]{\tiny$\alpha_{-2}$}}
\end{picture}
\end{center}
\begin{center}
\hskip -3cm \setlength{\unitlength}{0.16in}
\begin{picture}(24,2)
\put(5.6,2){\circle*{0.9}} \put(8,2){\makebox(0,0)[c]{$\bigcirc$}}
\put(10.4,2){\makebox(0,0)[c]{$\bigcirc$}}
\put(14.85,2){\makebox(0,0)[c]{$\bigcirc$}}
\put(17.25,2){\makebox(0,0)[c]{$\bigcirc$}}
\put(19.4,2){\makebox(0,0)[c]{$\bigcirc$}}
\put(8.35,2){\line(1,0){1.5}} \put(10.82,2){\line(1,0){0.8}}
\put(13.2,2){\line(1,0){1.2}} \put(15.28,2){\line(1,0){1.45}}
\put(17.7,2){\line(1,0){1.25}}
\put(6.8,2){\makebox(0,0)[c]{$\Longleftarrow$}}
\put(12.5,1.95){\makebox(0,0)[c]{$\cdots$}}
\put(-.2,2){\makebox(0,0)[c]{$\mf{b^\bullet}$:}}
\put(5.5,1){\makebox(0,0)[c]{\tiny$-\epsilon_{-m}$}}
\put(8,1){\makebox(0,0)[c]{\tiny$\alpha_{-m}$}}
\put(17.2,1){\makebox(0,0)[c]{\tiny$\alpha_{-3}$}}
\put(19.3,1){\makebox(0,0)[c]{\tiny$\alpha_{-2}$}}
\end{picture}
\end{center}
\begin{center}
\hskip -3cm \setlength{\unitlength}{0.16in}
\begin{picture}(24,2)
\put(5.7,2){\makebox(0,0)[c]{$\bigcirc$}}
\put(8,2){\makebox(0,0)[c]{$\bigcirc$}}
\put(10.4,2){\makebox(0,0)[c]{$\bigcirc$}}
\put(14.85,2){\makebox(0,0)[c]{$\bigcirc$}}
\put(17.25,2){\makebox(0,0)[c]{$\bigcirc$}}
\put(19.4,2){\makebox(0,0)[c]{$\bigcirc$}}
\put(6.8,2){\makebox(0,0)[c]{$\Longrightarrow$}}
\put(8.4,2){\line(1,0){1.55}} \put(10.82,2){\line(1,0){0.8}}
\put(13.2,2){\line(1,0){1.2}} \put(15.28,2){\line(1,0){1.45}}
\put(17.7,2){\line(1,0){1.25}}
\put(12.5,1.95){\makebox(0,0)[c]{$\cdots$}}
\put(-.5,2){\makebox(0,0)[c]{$\mf{c}$:}}
\put(5.5,1){\makebox(0,0)[c]{\tiny$-2\epsilon_{-m}$}}
\put(8,1){\makebox(0,0)[c]{\tiny$\alpha_{-m}$}}
\put(17.2,1){\makebox(0,0)[c]{\tiny$\alpha_{-3}$}}
\put(19.3,1){\makebox(0,0)[c]{\tiny$\alpha_{-2}$}}
\end{picture}
\end{center}
\begin{center}
\hskip -3cm \setlength{\unitlength}{0.16in}
\begin{picture}(24,3.5)
\put(8,2){\makebox(0,0)[c]{$\bigcirc$}}
\put(10.4,2){\makebox(0,0)[c]{$\bigcirc$}}
\put(14.85,2){\makebox(0,0)[c]{$\bigcirc$}}
\put(17.25,2){\makebox(0,0)[c]{$\bigcirc$}}
\put(19.4,2){\makebox(0,0)[c]{$\bigcirc$}}
\put(6,3.8){\makebox(0,0)[c]{$\bigcirc$}}
\put(6,.3){\makebox(0,0)[c]{$\bigcirc$}}
\put(8.4,2){\line(1,0){1.55}} \put(10.82,2){\line(1,0){0.8}}
\put(13.2,2){\line(1,0){1.2}} \put(15.28,2){\line(1,0){1.45}}
\put(17.7,2){\line(1,0){1.25}}
\put(7.6,2.2){\line(-1,1){1.3}}
\put(7.6,1.8){\line(-1,-1){1.3}}
\put(12.5,1.95){\makebox(0,0)[c]{$\cdots$}}
\put(-.5,2){\makebox(0,0)[c]{$\mf{d}$:}}
\put(3.3,0.3){\makebox(0,0)[c]{\tiny${-}\epsilon_{-m}{-}\epsilon_{-m+1}$}}
\put(4.7,3.8){\makebox(0,0)[c]{\tiny$\alpha_{-m}$}}
\put(8.2,1){\makebox(0,0)[c]{\tiny$\alpha_{-m+1}$}}
\put(17.2,1){\makebox(0,0)[c]{\tiny$\alpha_{-3}$}}
\put(19.3,1){\makebox(0,0)[c]{\tiny$\alpha_{-2}$}}
\end{picture}
\end{center}
According to \cite[Proposition 2.5.6]{K1} these Lie (super)algebras are $\mf{so}(2m+1)$,
$\mf{osp}(1|2m)$, $\mf{sp}(2m)$ for $m\ge 1$ and $\mf{so}(2m)$ for $m \ge 2$,
respectively. We will use the same notation
\makebox(23,0){$\oval(20,12)$}\makebox(-20,8){$\mf{k}^\xx$} to denote the diagrams of all
the degenerate cases for $m=0,1$ as well.  (See Sections~\ref{sec:realize} and
\ref{sec:m=0} below). We have used \makebox(15,5){\circle*{7}} to denote an odd
non-isotropic simple root. So $\mf{osp}(1|2m)$ is actually a Lie superalgebra (instead of
Lie algebra), but it is classical from the super duality viewpoint in this paper.

For $n\in\N$ let
\makebox(23,0){$\oval(20,12)$}\makebox(-20,8){$\mf{T}_n$},
\makebox(23,0){$\oval(20,14)$}\makebox(-20,8){$\ov{\mf{T}}_n$} and
\makebox(23,0){$\oval(20,14)$}\makebox(-20,8){$\wt{\mf{T}}_n$}
denote the following Dynkin diagrams, where $\bigotimes$ denotes
an odd isotropic simple root:
\begin{center}
\hskip -3cm \setlength{\unitlength}{0.16in}
\begin{picture}(24,3)
\put(8,2){\makebox(0,0)[c]{$\bigcirc$}}
\put(10.4,2){\makebox(0,0)[c]{$\bigcirc$}}
\put(14.85,2){\makebox(0,0)[c]{$\bigcirc$}}
\put(17.25,2){\makebox(0,0)[c]{$\bigcirc$}}
\put(5.6,2){\makebox(0,0)[c]{$\bigcirc$}}
\put(8.4,2){\line(1,0){1.55}} \put(10.82,2){\line(1,0){0.8}}
\put(13.2,2){\line(1,0){1.2}} \put(15.28,2){\line(1,0){1.45}}
\put(6,2){\line(1,0){1.4}}
\put(12.5,1.95){\makebox(0,0)[c]{$\cdots$}}
\put(0,1.2){{\ovalBox(1.6,1.2){$\mf{T}_n$}}}
\put(5.5,1){\makebox(0,0)[c]{\tiny$\beta_{\times}$}}
\put(8,1){\makebox(0,0)[c]{\tiny$\beta_{1}$}}
\put(10.3,1){\makebox(0,0)[c]{\tiny$\beta_{2}$}}
\put(15,1){\makebox(0,0)[c]{\tiny$\beta_{n-2}$}}
\put(17.2,1){\makebox(0,0)[c]{\tiny$\beta_{n-1}$}}
\end{picture}
\end{center}
\begin{center}
\hskip -3cm \setlength{\unitlength}{0.16in}
\begin{picture}(24,2)
\put(8,2){\makebox(0,0)[c]{$\bigcirc$}}
\put(10.4,2){\makebox(0,0)[c]{$\bigcirc$}}
\put(14.85,2){\makebox(0,0)[c]{$\bigcirc$}}
\put(17.25,2){\makebox(0,0)[c]{$\bigcirc$}}
\put(5.6,2){\makebox(0,0)[c]{$\bigotimes$}}
\put(8.4,2){\line(1,0){1.55}} \put(10.82,2){\line(1,0){0.8}}
\put(13.2,2){\line(1,0){1.2}} \put(15.28,2){\line(1,0){1.45}}
\put(6,2){\line(1,0){1.4}}
\put(12.5,1.95){\makebox(0,0)[c]{$\cdots$}}
\put(0,1.2){{\ovalBox(1.6,1.2){$\ov{\mf{T}}_n$}}}
\put(5.5,1){\makebox(0,0)[c]{\tiny$\alpha_{\times}$}}
\put(10.3,1){\makebox(0,0)[c]{\tiny$\beta_{3/2}$}}
\put(8,1){\makebox(0,0)[c]{\tiny$\beta_{1/2}$}}
\put(14.8,1){\makebox(0,0)[c]{\tiny$\beta_{n-5/2}$}}
\put(17.5,1){\makebox(0,0)[c]{\tiny$\beta_{n-3/2}$}}
\end{picture}
\end{center}
\begin{center}
\hskip -3cm \setlength{\unitlength}{0.16in}
\begin{picture}(24,2)
\put(8,2){\makebox(0,0)[c]{$\bigotimes$}}
\put(10.4,2){\makebox(0,0)[c]{$\bigotimes$}}
\put(14.85,2){\makebox(0,0)[c]{$\bigotimes$}}
\put(17.25,2){\makebox(0,0)[c]{$\bigotimes$}}
\put(5.6,2){\makebox(0,0)[c]{$\bigotimes$}}
\put(8.4,2){\line(1,0){1.55}} \put(10.82,2){\line(1,0){0.8}}
\put(13.2,2){\line(1,0){1.2}} \put(15.28,2){\line(1,0){1.45}}
\put(6,2){\line(1,0){1.4}}
\put(12.5,1.95){\makebox(0,0)[c]{$\cdots$}}
\put(0,1.2){{\ovalBox(1.6,1.2){$\wt{\mf{T}}_n$}}}
\put(5.5,1){\makebox(0,0)[c]{\tiny$\alpha_{\times}$}}
\put(8,1){\makebox(0,0)[c]{\tiny$\alpha_{1/2}$}}
\put(14.8,1){\makebox(0,0)[c]{\tiny$\alpha_{n-1}$}}
\put(17.2,1){\makebox(0,0)[c]{\tiny$\alpha_{n-1/2}$}}
\put(10.3,1){\makebox(0,0)[c]{\tiny$\alpha_{1}$}}
\end{picture}
\end{center}
The Lie superalgebras associated with these Dynkin diagrams are
$\gl(n+1)$, $\mf{gl}(1|n)$ and $\mf{gl}(n|n+1)$, respectively. In the
limit $n\to\infty$, the associated Lie superalgebras are direct limits of
these Lie superalgebras, and we
will simply drop $\infty$ to write
\makebox(23,0){$\oval(20,12)$}\makebox(-20,8){$\mf{T}$} =
\makebox(23,0){$\oval(20,12)$}\makebox(-20,8){$\mf{T}_\infty$} and
so on.

%As $(\alpha_{-2}|\alpha_{\times})=(\alpha_{-2}|\beta_{\times})=-1$,
Any of the {\em head} diagrams
\makebox(23,0){$\oval(20,11)$}\makebox(-20,8){$\mf{k}^\xx$} may be
connected with the {\em tail} diagrams
\makebox(23,0){$\oval(20,12)$}\makebox(-20,8){$\mf{T}_n$},
\makebox(23,0){$\oval(20,14)$}\makebox(-20,8){$\ov{\mf{T}}_n$} and
\makebox(23,0){$\oval(20,14)$}\makebox(-20,8){$\wt{\mf{T}}_n$} to
produce the following Dynkin diagrams ($n\in\N\cup\{\infty\}$):
\begin{equation}\label{Dynkin:combined}
%\begin{center}
\hskip -3cm \setlength{\unitlength}{0.16in}
\begin{picture}(24,1)
\put(5.0,0.5){\makebox(0,0)[c]{{\ovalBox(1.6,1.2){$\mf{k}^\xx$}}}}
\put(5.8,0.5){\line(1,0){1.85}}
\put(8.5,0.5){\makebox(0,0)[c]{{\ovalBox(1.6,1.2){$\mf{T}_n$}}}}
\put(15,0.5){\makebox(0,0)[c]{{\ovalBox(1.6,1.2){$\mf{k}^\xx$}}}}
\put(15.8,0.5){\line(1,0){1.85}}
\put(18.5,0.5){\makebox(0,0)[c]{{\ovalBox(1.6,1.2){$\ov{\mf{T}}_n$}}}}
\put(25,0.5){\makebox(0,0)[c]{{\ovalBox(1.6,1.2){$\mf{k}^\xx$}}}}
\put(25.8,0.5){\line(1,0){1.85}}
\put(28.5,0.5){\makebox(0,0)[c]{{\ovalBox(1.6,1.2){$\wt{\mf{T}}_n$}}}}
\end{picture}
%\end{center}
\end{equation}
We will denote the sets of simple roots of the above diagrams
accordingly by $\Pi_n^{\xx}$, $\ov{\Pi}_n^\xx$ and
$\wt{\Pi}_n^\xx$. For $n=\infty$, we also denote the sets of
positive roots by $\Phi_+^\xx$, $\ov{\Phi}^\xx_+$ and
$\wt{\Phi}^\xx_+$, and the sets of roots by $\Phi^\xx$,
$\ov{\Phi}^\xx$ and $\wt{\Phi}^\xx$, respectively.

\subsection{Realization} \label{sec:realize}

Let us denote the $3$ Dynkin diagrams of \eqnref{Dynkin:combined}
at $n=\infty$ by
\makebox(23,0){$\oval(20,14)$}\makebox(-20,8){$\G^\xx$},
\makebox(23,0){$\oval(20,14)$}\makebox(-20,8){$\SG^\xx$} and
\makebox(23,0){$\oval(20,14)$}\makebox(-20,8){$\DG^\xx$}. We
provide a realization for the corresponding Lie superalgebras.

 For $m\in\Z_+$ consider the following totally ordered set
$\wt{\I}_m$
\begin{align*}
\cdots <\ov{\frac{3}{2}}
<\ov{1}<\ov{\hf}<\underbrace{\ov{-1}<\ov{-2}
<\cdots<\ov{-m}}_m<\ov{0}<\underbrace{-m<\cdots<-1}_m
<\hf<1<\frac{3}{2}<\cdots
\end{align*}
%As usual for $a,b\in\wt{\I}_m$ we use $[a,b]$ to denote the set
%$\{x\in\wt{\I}_m|a\le x\le b\}$.
For $m\in\Z_+$ define the following subsets of $\wt{\mathbb I}_m$:
\begin{align*}
{{\mathbb I}}_m &:=
\{\underbrace{\ov{-1},\ldots,\ov{-m}}_m,\ov{0},
\underbrace{-m,\ldots,-1}_m\}\cup \{\ov{1},\ov{2},\ov{3},\ldots\}
\cup\{1,2,3,\ldots\},\\
\ov{{\mathbb I}}_m &:=
\{\underbrace{\ov{-1},\ldots,\ov{-m}}_m,\ov{0},
\underbrace{-m,\ldots,-1}_m\}\cup
\{\ov{\frac{1}{2}},\ov{\frac{3}{2}},\ov{\frac{5}{2}},\ldots\}
\cup\{\hf,\frac{3}{2},\frac{5}{2},\ldots\},
 \\
\wt{\I}^+_m &:=\{-m,\ldots,-1,\hf,1,\frac{3}{2},2,\ldots\}.
\end{align*}
For a subset $\X$ of $\wt{\I}_m$, define
%\begin{align*}
$\X^\times:=\X\setminus\{\ov{0}\},\quad \X^+ :=\X\cap\wt{\I}_m^+.
$
%\end{align*}

\subsubsection{General linear Lie superalgebra}

For a homogeneous element $v$ in a super vector space
$V=V_{\bar{0}}\oplus V_{\bar{1}}$ we denote by $|v|$ its
$\Z_2$-degree.

For $m\in\Z_+$ consider the infinite dimensional super space $\wt{V}_{m}$ over $\C$ with
ordered basis $\{v_i|i\in\wt{\I}_m\}$. We declare $|v_r|=|v_{\ov{r}}|=\bar{0}$, if
$r\in\Z\setminus\{0\}$, and $|v_r|=|v_{\ov{r}}|=\bar{1}$, if $r\in\hf+\Z_+$.  The parity
of the vector $v_{\ov{0}}$ is to be specified. With respect to this basis a linear map on
$\wt{V}_m$ may be identified with a complex matrix $(a_{rs})_{r,s\in\wt{\mathbb{I}}_m}$.
The Lie superalgebra $\gl(\wt{V}_m)$ is the Lie subalgebra of linear transformations on
$\wt{V}_m$ consisting of $(a_{rs})$ with $a_{rs}=0$ for all but finitely many $a_{rs}$'s.
Denote by $E_{rs}\in\gl(\wt{V}_m)$ the elementary matrix with $1$ at the $r$th row and
$s$th column and zero elsewhere.

The vector spaces $V_m$ and $\ov{V}_m$ are defined to be subspaces
of $\wt{V}_m$ with ordered basis $\{v_i\}$ indexed by $\I_m$ and
$\ov{\I}_m$, respectively. The corresponding subspaces of $V_m$,
$\ov{V}_m$ and $\wt{V}_m$ with basis vectors $v_i$, with $i$
indexed by $\I^\times_m$, $\ov{\I}^\times_m$ and
$\wt{\I}^\times_m$, respectively, are denoted by $V^\times_m$,
$\ov{V}^\times_m$ and $\wt{V}^\times_m$, respectively.  This gives
rise to Lie superalgebras $\gl(V_m)$, $\gl(\ov{V}_m)$,
$\gl(V^\times_m)$, $\gl(\ov{V}^\times_m)$ and
$\gl(\wt{V}^\times_m)$.

Let $W$ be one of the spaces $\wt{V}_m,\wt{V}^\times_m, V_m, V_m^\times, \ov{V}_m$ or
$\ov{V}_m^\times$. The standard Cartan subalgebra of $\gl(W)$ is spanned by the basis
$\{E_{rr}\}$, with corresponding dual basis $\{\epsilon_{r}\}$, where $r$ runs over the
index sets $\wt{\I}_m$, $\wt{\I}^\times_m$, ${\I}_m$, ${\I}^\times_m$, $\ov{\I}_m$,
$\ov{\I}^\times_m$, respectively.

\subsubsection{Skew-supersymmetric bilinear form on $W$}
\label{sec:skewsym}

In this subsection we set $|v_{\ov{0}}|=\bar{1}$. For $m\in\Z_+$
define a non-degenerate skew-supersymmetric bilinear form
$(\cdot|\cdot)$ on $\wt{V}_m$ by
\begin{align}\label{symp:bilinear:form}
&(v_{r}|v_{{s}}) =(v_{\ov{r}}|v_{\ov{s}})=0,\quad
(v_r|v_{\ov{s}}) =\delta_{rs}=-(-1)^{|v_r|\cdot|v_s|}(v_{\ov{s}}|v_r),
 \quad r,s\in\wt{\I}_m^+,\\
&(v_{\ov{0}}|v_{\ov{0}})=1, \quad
(v_{\ov{0}}|v_{r})=(v_{\ov{0}}|v_{\ov{r}})=0,\quad
r\in\wt{\I}_m^+.\nonumber
\end{align}
Restricting the form to $\wt{V}^\times_m$, $V_m$, $V^\times_m$,
$\ov{V}_m$ and $\ov{V}^\times_m$ gives rise to non-degenerate
skew-supersymmetric bilinear forms that will again be denoted by
$(\cdot|\cdot)$.

Let $W$ be as before. The Lie superalgebra $\spo(W)$ is the
subalgebra of $\gl(W)$ preserving the bilinear form
$(\cdot|\cdot)$.
%That is, for $\epsilon\in\Z_2$ and homogeneous $v,w\in W$ we have
%\begin{equation*}
%\spo(W)_\epsilon
%:=\{T\in\gl(W)_\epsilon|(Tv|w)=-(-1)^{\epsilon|v|}(v|Tw)\}.
%\end{equation*}
The standard Cartan subalgebra of $\spo(W)$ is spanned by the basis
$\{E_r:=E_{rr}-E_{\ov{r},\ov{r}}\}$, with corresponding dual basis $\{\epsilon_r\}$.
%where $r$ runs over the index sets $\wt{\I}^+_m$, $\wt{\I}^+_m$,
%${\I}^+_m$, ${\I}^+_m$, $\ov{\I}^+_m$, $\ov{\I}^+_m$,
%respectively.
We have the following realizations of the corresponding Lie
superalgebras for $\xx=\mf{b^\bullet},\cc$ and $m>0$:

\begin{center}
\begin{table}[ht]
\caption{}\label{table1}
\begin{tabular}{|c|c||c|c|}
    \hline
Lie superalgebra &  Dynkin diagram& Lie superalgebra &  Dynkin diagram\\
    \hline \hline
$\spo(\wt{V}_m)$
 &\makebox(23,0){$\oval(20,14)$}\makebox(-20,8){$\DG^{\mf{b^\bullet}}$}
 &$\spo(\wt{V}^\times_m)$
 & \makebox(23,2){$\oval(20,12)$}\makebox(-20,8){$\DG^{\mf{c}}$}\\
$\spo(V_m)$
 & \makebox(23,-2){$\oval(20,14)$}\makebox(-20,8){$\G^{\mf{b^\bullet}}$}
 &$\spo(V^\times_m)$
 & \makebox(23,1){$\oval(20,12)$}\makebox(-20,8){$\G^{\mf{c}}$}\\
$\spo(\ov{V}_m)$
 & \makebox(23,0){$\oval(20,14)$}\makebox(-20,8){$\SG^{\mf{b^\bullet}}$}
 &$\spo(\ov{V}^\times_m)$
 & \makebox(23,0){$\oval(20,12)$}\makebox(-20,8){$\SG^{\mf{c}}$}\\
    \hline
\end{tabular}
\end{table}
\end{center}
The sets $\Pi^{\xx}$, $\ov{\Pi}^\xx$ and $\wt{\Pi}^\xx$
give rise to the following sets of positive roots:
\begin{align*}
&\wt{\Phi}^{\mf b^\bullet}_+
 =\{\pm\epsilon_r-\epsilon_s|r< s\ (r,s\in\wt{\I}^+_m)\}
 \cup\{-2\epsilon_i\ (i\in\I^+_m)\}\cup\{-\epsilon_r\ (r\in\wt{\I}_m^+)\},\\
&\wt{\Phi}^{\mf c}_+
 =\{\pm\epsilon_r-\epsilon_s|r< s\ (r,s\in\wt{\I}^+_m)\}
 \cup\{-2\epsilon_i\ (i\in\I^+_m)\},\\
&{\Phi}^{\mf b^\bullet}_+
 =\{\pm\epsilon_i-\epsilon_j|i< j\ (i,j\in{\I}^+_m)\}
 \cup\{-\epsilon_i,-2\epsilon_i\ (i\in\I^+_m)\},\\
&{\Phi}^{\mf c}_+
 =\{\pm\epsilon_i-\epsilon_j|i< j\ (i,j\in{\I}^+_m)\}
 \cup\{-2\epsilon_i\ (i\in\I^+_m)\},\\
&\ov{\Phi}^{\mf b^\bullet}_+
 =\{\pm\epsilon_r-\epsilon_s|r< s\ (r,s\in\ov{\I}^+_m)\}
 \cup\{-2\epsilon_i\ (-m\le i\le -1)\}\cup\{-\epsilon_r\ (r\in\ov{\I}_m^+)\},\\
&\ov{\Phi}^{\mf c}_+
 =\{\pm\epsilon_r-\epsilon_s|r< s\ (r,s\in\ov{\I}^+_m)\}
 \cup\{-2\epsilon_i\ (-m\le i\le -1)\}.
\end{align*}
The corresponding subsets of simple roots can be read off from the
corresponding diagrams in \eqref{Dynkin:combined} (here we recall
the notation of roots $\alpha$'s and $\beta$'s from
\eqref{alpha:beta}).

\subsubsection{Supersymmetric bilinear form on $W$}\label{sec:symm}

Let $W$ be as before.  In this subsection we set
$|v_{\ov{0}}|=\bar{0}$. Define a supersymmetric bilinear form
$(\cdot|\cdot)$ on $\wt{V}_m$ by
\begin{align}\label{sym:bilinear:form}
&(v_{r}|v_{{s}})=(v_{\ov{r}}|v_{\ov{s}})=0,\quad
(v_r|v_{\ov{s}})=\delta_{rs}=(-1)^{|v_r|\cdot|v_s|}(v_{\ov{s}}|v_r),
 \quad r,s\in\wt{\I}_m^+,\\
&(v_{\ov{0}}|v_{\ov{0}})=1,\quad (v_{\ov{0}}|v_{r})
=(v_{\ov{0}}|v_{\ov{r}})=0,\quad r\in\wt{\I}_m^+.\nonumber
\end{align}
Restricting the form to $\wt{V}^\times_m$, $V_m$, $V^\times_m$, $\ov{V}_m$ and
$\ov{V}^\times_m$ gives respective non-degenerate supersymmetric bilinear forms that will
also be denoted by $(\cdot|\cdot)$. The Lie superalgebra $\osp(W)$ is the subalgebra of
$\gl(W)$ preserving the respective bilinear form determined by
\eqnref{sym:bilinear:form}. The standard Cartan subalgebra of $\osp(W)$ is also spanned
by the basis $\{E_r:=E_{rr}-E_{\ov{r},\ov{r}}\}$, with corresponding dual basis
$\{\epsilon_r\}$. We have the following realizations of the corresponding Lie
superalgebras for $\xx=\mf{b}, \dd$ and $m>0$:
\begin{center}
\begin{table}[ht]
\caption{}\label{table2}
\begin{tabular}{|c|c||c|c|}
    \hline
Lie superalgebra &  Dynkin diagram& Lie superalgebra &  Dynkin diagram\\
    \hline \hline
$\osp(\wt{V}_m)$
 & \makebox(23,0){$\oval(20,14)$}\makebox(-20,8){$\DG^{\mf{b}}$}
 &$\osp(\wt{V}^\times_m)$
 & \makebox(23,1){$\oval(20,14)$}\makebox(-20,8){$\DG^{\mf{d}}$}\\
$\osp(V_m)$
 & \makebox(23,-2){$\oval(20,13)$}\makebox(-20,8){$\G^{\mf{b}}$}&$\osp(V^\times_m)$
 & \makebox(23,0){$\oval(20,13)$}\makebox(-20,8){$\G^{\mf{d}}$}\\
$\osp(\ov{V}_m)$
 & \makebox(23,0){$\oval(20,14)$}\makebox(-20,8){$\SG^{\mf{b}}$}&$\osp(\ov{V}^\times_m)$
 & \makebox(23,0){$\oval(20,14)$}\makebox(-20,8){$\SG^{\mf{d}}$}\\
    \hline
\end{tabular}
\end{table}
\end{center}
The sets $\Pi^{\xx}$, $\ov{\Pi}^\xx$ and $\wt{\Pi}^\xx$ give rise
to the following sets of positive roots:
\begin{align*}
&\wt{\Phi}^{\mf b}_+
 =\{\pm\epsilon_r-\epsilon_s|r< s\ (r,s\in\wt{\I}^+_m)\}
 \cup\{-2\epsilon_s\ (s\in\ov{\I}^+_0)\}\cup\{-\epsilon_r\ (r\in\wt{\I}_m^+)\},\\
&\wt{\Phi}^{\mf d}_+
 =\{\pm\epsilon_r-\epsilon_s|r< s\ (r,s\in\wt{\I}^+_m)\}
 \cup\{-2\epsilon_s\ (s\in\ov{\I}^+_0)\},\\
&{\Phi}^{\mf b}_+
 =\{\pm\epsilon_i-\epsilon_j|i< j\ (i,j\in{\I}^+_m)\}
 \cup\{-\epsilon_i\ (i\in\I^+_m)\},\\
&{\Phi}^{\mf d}_+
 =\{\pm\epsilon_i-\epsilon_j|i< j\ (i,j\in{\I}^+_m)\},\\
&\ov{\Phi}^{\mf b}_+
 =\{\pm\epsilon_r-\epsilon_s|r< s\ (r,s\in\ov{\I}^+_m)\}
 \cup\{-2\epsilon_s\ (s\in\ov{\I}_0^+)\}\cup\{-\epsilon_r\ (r\in\ov{\I}_m^+)\},\\
&\ov{\Phi}^{\mf d}_+
 =\{\pm\epsilon_r-\epsilon_s|r< s\ (r,s\in\ov{\I}^+_m)\}
 \cup\{-2\epsilon_s\ (s\in\ov{\I}_0^+)\}.
\end{align*}
Again, the subsets of simple roots can be read off from the
corresponding diagrams in \eqref{Dynkin:combined}.

\subsection{The case $m=0$}\label{sec:m=0}

The Dynkin diagrams of $\spo(W)$ with a distinguished set of
simple roots, for $W=\wt{V}_0,\wt{V}_0^\times,V_0, V_0^\times,
\ov{V}_0, \ov{V}_0^\times$ are listed in order as follows (see
also \remref{rem:000} below):
\begin{center}
\hskip -3cm \setlength{\unitlength}{0.16in}
\begin{picture}(24,3)
\put(8,2){\makebox(0,0)[c]{$\bigotimes$}}
\put(10.4,2){\makebox(0,0)[c]{$\bigotimes$}}
\put(14.85,2){\makebox(0,0)[c]{$\bigotimes$}}
\put(17.25,2){\makebox(0,0)[c]{$\bigotimes$}}
\put(19.4,2){\makebox(0,0)[c]{$\bigotimes$}}
\put(5.6,2){\makebox(0,0)[c]{$\bigcirc$}}
\put(8.4,2){\line(1,0){1.55}} \put(10.82,2){\line(1,0){0.8}}
\put(13.2,2){\line(1,0){1.2}} \put(15.28,2){\line(1,0){1.45}}
\put(17.7,2){\line(1,0){1.25}} \put(19.8,2){\line(1,0){1.5}}
\put(6,1.8){$\Longleftarrow$}
\put(12.5,1.95){\makebox(0,0)[c]{$\cdots$}}
\put(22.6,1.95){\makebox(0,0)[c]{$\cdots$}}
\put(8,1){\makebox(0,0)[c]{\tiny $\alpha_{1/2}$}}
\put(10.5,1){\makebox(0,0)[c]{\tiny $\alpha_{1}$}}
\put(14.8,1){\makebox(0,0)[c]{\tiny $\alpha_{r-1/2}$}}
\put(17.15,1){\makebox(0,0)[c]{\tiny $\alpha_{r}$}}
\put(19.8,1){\makebox(0,0)[c]{\tiny $\alpha_{r+1/2}$}}
\put(5.4,1){\makebox(0,0)[c]{\tiny $-\epsilon_{1/2}$}}
\put(0,1.2){{\ovalBox(1.6,1.2){$\DG^{\mf b^\bullet}$}}}
\end{picture}
\end{center}

\begin{center}
\hskip -3cm \setlength{\unitlength}{0.16in}
\begin{picture}(24,4)
\put(8,2){\makebox(0,0)[c]{$\bigotimes$}}
\put(10.4,2){\makebox(0,0)[c]{$\bigotimes$}}
\put(14.85,2){\makebox(0,0)[c]{$\bigotimes$}}
\put(17.25,2){\makebox(0,0)[c]{$\bigotimes$}}
\put(19.4,2){\makebox(0,0)[c]{$\bigotimes$}}
\put(6,3.8){\makebox(0,0)[c]{$\bigotimes$}}
\put(6,.3){\makebox(0,0)[c]{$\bigotimes$}}
\put(8.4,2){\line(1,0){1.55}} \put(10.82,2){\line(1,0){0.8}}
\put(13.2,2){\line(1,0){1.2}} \put(15.28,2){\line(1,0){1.45}}
\put(17.7,2){\line(1,0){1.25}} \put(19.8,2){\line(1,0){1.5}}
\put(7.5,2.2){\line(-1,1){1.3}}
\put(7.6,1.8){\line(-1,-1){1.25}}
\put(6,.8){\line(0,1){2.6}}
\put(12.5,1.95){\makebox(0,0)[c]{$\cdots$}}
\put(22.6,1.95){\makebox(0,0)[c]{$\cdots$}}
\put(8.1,1){\makebox(0,0)[c]{\tiny $\alpha_{1}$}}
\put(10.9,1){\makebox(0,0)[c]{\tiny $\alpha_{3/2}$}}
\put(14.8,1){\makebox(0,0)[c]{\tiny $\alpha_{r-1/2}$}}
\put(17.15,1){\makebox(0,0)[c]{\tiny $\alpha_{r}$}}
\put(19.8,1){\makebox(0,0)[c]{\tiny $\alpha_{r+1/2}$}}
\put(4.5,3.8){\makebox(0,0)[c]{\tiny $\alpha_{1/2}$}}
\put(3.5,0.3){\makebox(0,0)[c]{\tiny $-\epsilon_{1/2}-\epsilon_{1}$}}
\put(0,1.2){{\ovalBox(1.6,1.2){$\DG^{\mf c}$}}}
\end{picture}
\end{center}

\begin{center}
\hskip -3cm \setlength{\unitlength}{0.16in}
\begin{picture}(24,3)
\put(8,2){\makebox(0,0)[c]{$\bigcirc$}}
\put(10.4,2){\makebox(0,0)[c]{$\bigcirc$}}
\put(14.85,2){\makebox(0,0)[c]{$\bigcirc$}}
\put(17.25,2){\makebox(0,0)[c]{$\bigcirc$}}
\put(19.4,2){\makebox(0,0)[c]{$\bigcirc$}}
\put(5.6,2){\circle*{0.9}}
\put(8.4,2){\line(1,0){1.55}} \put(10.82,2){\line(1,0){0.8}}
\put(13.2,2){\line(1,0){1.2}} \put(15.28,2){\line(1,0){1.45}}
\put(17.7,2){\line(1,0){1.25}} \put(19.8,2){\line(1,0){1.5}}
\put(6,1.8){$\Longleftarrow$}
\put(12.5,1.95){\makebox(0,0)[c]{$\cdots$}}
\put(22.6,1.95){\makebox(0,0)[c]{$\cdots$}}
\put(8,1){\makebox(0,0)[c]{\tiny $\beta_1$}}
\put(10.5,1){\makebox(0,0)[c]{\tiny $\beta_{2}$}}
\put(14.8,1){\makebox(0,0)[c]{\tiny $\beta_{n-1}$}}
\put(17.15,1){\makebox(0,0)[c]{\tiny $\beta_{n}$}}
\put(19.8,1){\makebox(0,0)[c]{\tiny $\beta_{n+1}$}}
\put(5.4,1){\makebox(0,0)[c]{\tiny $-\epsilon_{1}$}}
\put(-1,1.2){{\ovalBox(1.6,1.2){$\G^{\mf b^\bullet}$}}}
\put(1,1.5){=}
\put(2,1.2){{\ovalBox(1.6,1.2){$\SG^{\mf b}$}}}
\end{picture}
\end{center}

\begin{center}
\hskip -3cm \setlength{\unitlength}{0.16in}
\begin{picture}(24,3)
\put(8,2){\makebox(0,0)[c]{$\bigcirc$}}
\put(10.4,2){\makebox(0,0)[c]{$\bigcirc$}}
\put(14.85,2){\makebox(0,0)[c]{$\bigcirc$}}
\put(17.25,2){\makebox(0,0)[c]{$\bigcirc$}}
\put(19.4,2){\makebox(0,0)[c]{$\bigcirc$}}
\put(5.6,2){\makebox(0,0)[c]{$\bigcirc$}}
\put(8.4,2){\line(1,0){1.55}} \put(10.82,2){\line(1,0){0.8}}
\put(13.2,2){\line(1,0){1.2}} \put(15.28,2){\line(1,0){1.45}}
\put(17.7,2){\line(1,0){1.25}} \put(19.8,2){\line(1,0){1.5}}
\put(6,1.8){$\Longrightarrow$}
\put(12.5,1.95){\makebox(0,0)[c]{$\cdots$}}
\put(22.6,1.95){\makebox(0,0)[c]{$\cdots$}}
\put(8,1){\makebox(0,0)[c]{\tiny $\beta_1$}}
\put(10.5,1){\makebox(0,0)[c]{\tiny $\beta_{2}$}}
\put(14.8,1){\makebox(0,0)[c]{\tiny $\beta_{n-1}$}}
\put(17.15,1){\makebox(0,0)[c]{\tiny $\beta_{n}$}}
\put(19.8,1){\makebox(0,0)[c]{\tiny $\beta_{n+1}$}}
\put(5.4,1){\makebox(0,0)[c]{\tiny $-2\epsilon_{1}$}}
\put(-1,1.2){{\ovalBox(1.6,1.2){$\G^{\mf c}$}}}
\put(1,1.5){=}
\put(2,1.2){{\ovalBox(1.6,1.2){$\SG^{\mf d}$}}}
\end{picture}
\end{center}

\begin{center}
\hskip -3cm \setlength{\unitlength}{0.16in}
\begin{picture}(24,3)
\put(8,2){\makebox(0,0)[c]{$\bigcirc$}}
\put(10.4,2){\makebox(0,0)[c]{$\bigcirc$}}
\put(14.85,2){\makebox(0,0)[c]{$\bigcirc$}}
\put(17.25,2){\makebox(0,0)[c]{$\bigcirc$}}
\put(19.4,2){\makebox(0,0)[c]{$\bigcirc$}}
\put(5.6,2){\makebox(0,0)[c]{$\bigcirc$}}
\put(8.4,2){\line(1,0){1.55}} \put(10.82,2){\line(1,0){0.8}}
\put(13.2,2){\line(1,0){1.2}} \put(15.28,2){\line(1,0){1.45}}
\put(17.7,2){\line(1,0){1.25}} \put(19.8,2){\line(1,0){1.5}}
\put(6,1.8){$\Longleftarrow$}
\put(12.5,1.95){\makebox(0,0)[c]{$\cdots$}}
\put(22.6,1.95){\makebox(0,0)[c]{$\cdots$}}
\put(8,1){\makebox(0,0)[c]{\tiny $\beta_{1/2}$}}
\put(10.5,1){\makebox(0,0)[c]{\tiny $\beta_{3/2}$}}
\put(14.8,1){\makebox(0,0)[c]{\tiny $\beta_{r-1}$}}
\put(17.15,1){\makebox(0,0)[c]{\tiny $\beta_{r}$}}
\put(19.8,1){\makebox(0,0)[c]{\tiny $\beta_{r+1}$}}
\put(5.4,1){\makebox(0,0)[c]{\tiny $-\epsilon_{1/2}$}}
\put(-1,1.2){{\ovalBox(1.6,1.2){$\SG^{\mf b^\bullet}$}}}
\put(1,1.5){=}
\put(2,1.2){{\ovalBox(1.6,1.2){$\G^{\mf b}$}}}
\end{picture}
\end{center}

\begin{center}
\hskip -3cm \setlength{\unitlength}{0.16in}
\begin{picture}(24,4)
\put(8,2){\makebox(0,0)[c]{$\bigcirc$}}
\put(10.4,2){\makebox(0,0)[c]{$\bigcirc$}}
\put(14.85,2){\makebox(0,0)[c]{$\bigcirc$}}
\put(17.25,2){\makebox(0,0)[c]{$\bigcirc$}}
\put(19.4,2){\makebox(0,0)[c]{$\bigcirc$}}
\put(6,3.8){\makebox(0,0)[c]{$\bigcirc$}}
\put(6,.3){\makebox(0,0)[c]{$\bigcirc$}}
\put(8.4,2){\line(1,0){1.55}} \put(10.82,2){\line(1,0){0.8}}
\put(13.2,2){\line(1,0){1.2}} \put(15.28,2){\line(1,0){1.45}}
\put(17.7,2){\line(1,0){1.25}} \put(19.8,2){\line(1,0){1.5}}
\put(7.6,2.2){\line(-1,1){1.3}}
\put(7.6,1.8){\line(-1,-1){1.3}}
\put(12.5,1.95){\makebox(0,0)[c]{$\cdots$}}
\put(22.6,1.95){\makebox(0,0)[c]{$\cdots$}}
\put(8.1,1){\makebox(0,0)[c]{\tiny $\beta_{3/2}$}}
\put(10.7,1){\makebox(0,0)[c]{\tiny $\beta_{5/2}$}}
\put(14.8,1){\makebox(0,0)[c]{\tiny $\beta_{r-1}$}}
\put(17.15,1){\makebox(0,0)[c]{\tiny $\beta_{r}$}}
\put(19.8,1){\makebox(0,0)[c]{\tiny $\beta_{r+1}$}}
\put(4.5,3.8){\makebox(0,0)[c]{\tiny $\beta_{1/2}$}}
\put(3.5,0.3){\makebox(0,0)[c]{\tiny $-\epsilon_{1/2}-\epsilon_{3/2}$}}
\put(-1,1.2){{\ovalBox(1.6,1.2){$\SG^{\mf c}$}}}
\put(1,1.5){=}
\put(2,1.2){{\ovalBox(1.6,1.2){$\G^{\mf d}$}}}
\end{picture}
\end{center}
For the sake of completeness we also list the corresponding sets
of positive roots.
\begin{align*}
&\wt{\Phi}^{\mf b^\bullet}_+
 =\{\pm\epsilon_r-\epsilon_s|r< s\ (r,s\in\wt{\I}^+_0)\}
 \cup\{-2\epsilon_i\ (i\in\I^+_0)\}\cup\{-\epsilon_r\ (r\in\wt{\I}_0^+)\},\\
&\wt{\Phi}^{\mf c}_+
  =\{\pm\epsilon_r-\epsilon_s|r< s\ (r,s\in\wt{\I}^+_0)\}
  \cup\{-2\epsilon_i\ (i\in\I^+_0)\},\\
&{\Phi}^{\mf b^\bullet}_+
 =\{\pm\epsilon_i-\epsilon_j|i< j\ (i,j\in{\I}^+_0)\}
 \cup\{-\epsilon_i,-2\epsilon_i\ (i\in\I^+_0)\},\\
&{\Phi}^{\mf c}_+
 =\{\pm\epsilon_i-\epsilon_j|i< j\ (i,j\in{\I}^+_0)\}
 \cup\{-2\epsilon_i\ (i\in\I^+_0)\},\\
&\ov{\Phi}^{\mf b^\bullet}_+
 =\{\pm\epsilon_r-\epsilon_s|r< s\ (r,s\in\ov{\I}^+_0)\}
 \cup\{-\epsilon_r\ (r\in\ov{\I}_0^+)\},\\
&\ov{\Phi}^{\mf c}_+
 =\{\pm\epsilon_r-\epsilon_s|r< s\ (r,s\in\ov{\I}^+_0)\}.
\end{align*}

\begin{rem}\label{rem:000}
It is easy to see that we have the following isomorphisms of Lie
superalgebras with identical Dynkin diagrams:
$\osp(V_0)\cong\spo(\ov{V}_0)$, $\osp(\ov{V}_0) \cong\spo(V_0)$,
$\osp(V^\times_0)\cong\spo(\ov{V}^\times_0)$,
$\osp(\ov{V}^\times_0)\cong\spo(V^\times_0)$.
\end{rem}

\subsection{Central extensions}  \label{sec:ext}
We will replace the above matrix realization of the Lie
superalgebras with Dynkin diagrams
\makebox(23,0){$\oval(20,14)$}\makebox(-20,8){$\G^\xx$},
\makebox(23,0){$\oval(20,14)$}\makebox(-20,8){$\SG^\xx$} and
\makebox(23,0){$\oval(20,14)$}\makebox(-20,8){$\DG^\xx$} by their
central extensions, for $\xx=\mf{b, b^\bullet, c, d}$. These
central extensions will be convenient and conceptual for later
formulation of truncation functors and super duality.

Let $m\in\Z_+$. Consider the central extension
$\widehat{\gl}(\wt{V}_m)$ of $\gl(\wt{V}_m)$
by the one-dimensional center $\C K$ determined by the $2$-cocycle
\begin{align*}
\tau(A,B):=\text{Str}([\mathfrak{J},A]B),\quad A,B\in\gl(\wt{V}_m),
\end{align*}
where $\mathfrak{J}=E_{\ov{0}\ov{0}}+\sum_{r\le \ov{\hf}}E_{rr}$
and $\text{Str}$ denotes the supertrace. Observe that the cocycle
$\tau$ is a coboundary.
Indeed, as a vector space, $\widehat{\gl}(\wt{V}_m) =\gl(\wt{V}_m) \oplus \C K$,
and let us denote by $\widehat{X}$ for $X\in\gl(\wt{V}_m)$ to indicate
that it is in $\widehat{\gl}(\wt{V}_m)$. Then the map from $\widehat{\gl}(\wt{V}_m)$
to the direct sum of Lie superalgebras $\gl(\wt{V}_m) \oplus \C K$, which sends
$\widehat{X}$ to ${X}':= X - \text{Str}(\mf{J}X) K,$
 is an algebra isomorphism, i.e., $[{X}', {Y}']= {[X,Y]}' +\tau(X,Y) K$.

For $W=\wt{V}^\times_m, V_m, V_m^\times, \ov{V}_m,
\ov{V}_m^\times$ the restrictions of $\tau$ to the subalgebras
$\gl(W)$ give rise to respective central extensions, which in turn
induce central extensions on $\osp(W)$ and $\spo(W)$. We denote
such a central extension of $\spo(W)$ or $\osp(W)$ by $\G^\xx$
(respectively, $\SG^\xx$ and $\DG^\xx$) when it corresponds to the
Dynkin diagram
\makebox(23,0){$\oval(20,13)$}\makebox(-20,8){$\G^\xx$} in Tables
\ref{table1} and \ref{table2} (respectively,
\makebox(23,0){$\oval(20,13)$}\makebox(-20,8){$\SG^\xx$} and
\makebox(23,0){$\oval(20,14)$}\makebox(-20,8){$\DG^\xx$}).

We make a trivial yet crucial observation that $\G^\xx$ and $\SG^\xx$ are naturally
subalgebras of $\DG^\xx$. The standard Cartan subalgebras of $\G^\xx$, $\SG^\xx$ and
$\DG^\xx$ will be denoted by $\h^\xx$, $\ov{\h}^\xx$ and $\wt{\h}^\xx$, respectively.
$\h^\xx$, $\ov{\h}^\xx$ or $\wt{\h}^\xx$ has a basis $\{K,\widehat{E}_{r} \}$  with dual
basis $\{\Lambda_0,\epsilon_r\}$ in the restricted dual $(\h^\xx)^*$, $(\ov{\h}^\xx)^*$
or $(\wt{\h}^\xx)^*$,  where $r$ runs over the index sets $\I^+_m$, $\ov{\I}^+_m$ or
$\wt{\I}^+_m$, respectively. Here $\La_0$ is defined by letting
\begin{align*}
\La_0(K)=1,\quad\La_0(\widehat{E}_{r})=0,
\end{align*}
for all relevant $r$ in each case.

{\bf In the remainder of the paper we shall drop the superscript
$\xx$.} For example, we write $\G$, $\SG$ and $\wt{\G}$ for
$\G^\xx$, $\SG^\xx$ and $\wt{\G}^\xx$, with associated Dynkin
diagrams \makebox(23,0){$\oval(20,14)$}\makebox(-20,8){$\G$},
\makebox(23,0){$\oval(20,14)$}\makebox(-20,8){$\SG$} and
\makebox(23,0){$\oval(20,14)$}\makebox(-20,8){$\DG$},
respectively, where $\xx$ denotes a fixed type among
$\mf{b,b^\bullet,c,d}$.

\section{Categories $\mathcal{O}$, $\ov{\mathcal{O}}$
and $\wt{\mathcal{O}}$}\label{sec:O}

In this section, we first introduce the categories $\mathcal{O}$, $\ov{\mathcal{O}}$ and
$\wt{\mathcal{O}}$ of $\G$-modules, $\SG$-modules and $\wt{\G}$-modules, respectively.
Then we study the truncation functors which relate $\SG$ to finite dimensional Lie
superalgebras of $\osp$ type.

Let $m\in\Z_+$ %and $d\in\C$
be fixed.

\subsection{The weights}
\label{sec:OBCD}

We fix an
arbitrary subset $Y_0$ of $\Pi({\mf k})$.

Let $Y$, $\ov{Y}$ and $\wt{Y}$ be the union of $Y_0$ and the subset of simple roots of
\makebox(23,0){$\oval(20,12)$}\makebox(-20,8){$\mf T$},
\makebox(23,0){$\oval(20,12)$}\makebox(-20,8){$\ov{\mf T}$}  and
\makebox(23,0){$\oval(20,13)$}\makebox(-20,8){$\wt{\mf T}$}, respectively, with the
leftmost one removed. We have $Y_0=\emptyset$ for $m=0$. As $Y$, $\ov{Y}$ and $\wt{Y}$
are fixed, we will make the convention of suppressing them from notations below. Set
$\mf{l}$, $\ov{\mf{l}}$ and $\wt{\mf l}$ to be the standard Levi subalgebras of $\G$,
$\SG$ and $\DG$ corresponding to the subsets $Y$, $\ov{Y}$ and $\wt{Y}$, respectively.
The Borel subalgebras of $\G$, $\SG$ and $\DG$, spanned by the central element $K$ and
upper triangular matrices, are denoted by $\mf{b}$, $\ov{\mf{b}}$ and $\wt{\mf{b}}$,
respectively. Let $\mf{p} =\mf{l} +\mf{b}$, $\ov{\mf p} =\ov{\mf{l}} + \ov{\mf{b}}$ and
$\wt{\mf p} =\wt{\mf{l}} +\wt{\mf{b}}$ be the corresponding parabolic subalgebras with
nilradicals $\mf{u}$, $\ov{\mf u}$ and $\wt{\mf u}$ and opposite nilradicals $\mf{u}_-$,
$\ov{\mf u}_-$ and $\wt{\mf u}_-$, respectively.
%Set
%\begin{align}\label{lattice:xx}
%\begin{split}
%\Gamma &:=  \{\Sigma_{r\in\I_m^+}a_r \epsilon_r+d\La_0 \mid
%\forall a_r \in \C  \},
% \\
%\ov{\Gamma} &:=  \{\Sigma_{r\in\ov{\I}^+_m}a_r \epsilon_r+d\La_0
%\mid \forall a_r \in \C  \},
% \\
%\wt{\Gamma} &:=  \{\Sigma_{r\in\wt{\I}_m^+}a_r \epsilon_r+d\La_0
%\mid \forall a_r \in \C  \}.
%\end{split}
%\end{align}
%\begin{align}\label{lattice:xx}
%\Gamma :=  \sum_{r\in\I_m^+}\C \epsilon_r+ \C \La_0, \quad
%\ov{\Gamma} :=  \sum_{r\in\ov{\I}^+_m} \C \epsilon_r+ \C\La_0, \quad
%\wt{\Gamma} :=  \sum_{r\in\wt{\I}_m^+}\C \epsilon_r+ \C\La_0.
%\end{align}

Given a partition $\mu=(\mu_1,\mu_2,\ldots)$, we denote by
$\ell(\mu)$ the length of $\mu$ and by $\mu'$ its conjugate
partition. We also denote by $\theta(\mu)$ the modified Frobenius
coordinates of $\mu$:
\begin{equation*}
\theta(\mu)
:=(\theta(\mu)_{1/2},\theta(\mu)_1,\theta(\mu)_{3/2},\theta(\mu)_2,\ldots),
\end{equation*}
where
$$\theta(\mu)_{i-1/2}:=\max\{\mu'_i-i+1,0\}, \quad
\theta(\mu)_i:=\max\{\mu_i-i, 0\}, \quad i\in\N.
$$

Let $\la_{-m},\ldots,\la_{-1}\in\C$ and $\la^+$ be a partition.
The tuple $(\la_{-m},\ldots,\la_{-1};\la^+)$ is said to satisfy a
dominant condition if $\langle\sum_{i=-m}^{-1}\la_i\epsilon_i,
h_\alpha \rangle\in\Z_+$ for all $\alpha\in Y_0$, where $h_\alpha$
denotes the coroot of $\alpha$. Associated to such a dominant
tuple and each $d \in \C$, we define the weights (which will be
called {\em dominant})
\begin{align}
{\la} &:=\sum_{i=-m}^{-1}\la_{i}\epsilon_{i}
 + \sum_{j\in\N}\la^+_{j}\epsilon_{j}
 + d\La_0\in {\h}^{*},\label{weight:Im}\\
\la^\natural &:=\sum_{i=-m}^{-1}\la_{i}\epsilon_{i}
 + \sum_{s\in\hf+\Z_+}(\la^+)'_{s+\hf}\epsilon_s
 + d\La_0\in \ov{\h}^{*}, \label{weight:ovIm}\\
\la^\theta &:=\sum_{i=-m}^{-1}\la_{i}\epsilon_{i}
 + \sum_{r\in\hf\N}\theta(\la^+)_r\epsilon_r
 + d\La_0\in \wt{\h}^{*}.\label{weight:wtIm}
\end{align}
%We sometimes identify a dominant weight $\la$ with a dominant
%tuple $(\la_{-m},\ldots,\la_{-1};\la^+)$ for convenience.
We denote by $P^+\subset{\h}^*$, $\bar{P}^+\subset\ov{\h}^*$ and
$\wt{P}^+\subset\wt{\h}^*$ the sets of all dominant weights of the form
\eqnref{weight:Im}, \eqnref{weight:ovIm} and \eqnref{weight:wtIm} for all $d\in \C$,
respectively. By definition we have bijective maps
\begin{eqnarray*}
\natural:P^+\longrightarrow\bar{P}^+, && \quad \la\mapsto
\la^\natural, \\
\theta:P^+\longrightarrow\wt{P}^+, && \quad \la\mapsto \la^\theta.
\end{eqnarray*}

For $\mu\in P^+$, let $L(\mf{l},\mu)$ denote the highest weight
irreducible $\mf{l}$-module of highest weight $\mu$. We extend
$L(\mf{l},\mu)$ to a $\mf{p}$-module by letting $\mf{u}$ act
trivially.  Define as usual the parabolic Verma module
$\Delta(\mu)$ and its irreducible quotient $L(\mu)$ over $\G$:
\begin{align*}
\Delta(\mu):=\text{Ind}_{\mf{p}}^{\G}L(\mf{l},\mu), \qquad
\Delta(\mu) \twoheadrightarrow L(\mu).
\end{align*}
Similarly, for $\mu\in{P^+}$, we define the irreducible $\ov{\mf{l}}$-module
$L(\ov{\mf{l}},\mu^\natural)$, the parabolic Verma $\SG$-module
$\ov{\Delta}(\mu^\natural)$ and its irreducible $\SG$-quotient $\ov{L}(\mu^\natural)$, as
well as the irreducible $\wt{\mf{l}}$-module $L(\wt{\mf{l}},\mu^\theta)$, the parabolic
Verma $\DG$-module $\wt{\Delta}(\mu^\theta)$ and its irreducible $\DG$-quotient
$\wt{L}(\mu^\theta)$.

\subsection{The categories $\mc{O}, \ov{\mc O}$ and $\wt{\mc O}$}

\begin{lem}\label{lem:paraverma}
Let $\mu\in P^+$.
\begin{itemize}
\item[(i)] The restrictions to $\mf{l}$ of the $\G$-modules
$\Delta(\mu)$ and $L(\mu)$  decompose  into direct sums of
$L(\mf{l},\nu)$ for $\nu\in P^+$.

\item[(ii)] The restrictions to $\ov{\mf{l}}$ of the $\SG$-modules
$\ov{\Delta}(\mu^\natural)$ and $\ov{L}(\mu^\natural)$ decompose
into direct sums of $L(\ov{\mf{l}},\nu^\natural)$ for $\nu\in
P^+$.

\item[(iii)] The restrictions to $\wt{\mf{l}}$ of the
$\DG$-modules $\wt{\Delta}(\mu^\theta)$ and $\wt{L}(\mu^\theta)$
decompose into direct sums of $L(\wt{\mf{l}},\nu^\theta)$ for
$\nu\in P^+$.
\end{itemize}
\end{lem}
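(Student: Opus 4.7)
The plan is to prove all three statements in parallel, as they follow the same pattern. For concreteness, I describe (i); cases (ii) and (iii) are completely analogous after substituting the corresponding Levi, nilradical, and dominance cone.

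First, by PBW applied to $\G = \mf u_- \oplus \mf l \oplus \mf u$, one has an $\mf l$-module isomorphism
\begin{equation*}
\Delta(\mu) \cong U(\mf u_-) \otimes L(\mf l, \mu),
\end{equation*}
with $\mf l$ acting diagonally via the adjoint action on $U(\mf u_-)$ and intrinsically on $L(\mf l, \mu)$. The (super)symmetrization map identifies $U(\mf u_-)$ with the (super)symmetric algebra $S(\mf u_-)$ as an $\mf l$-module, so it suffices to decompose $S(\mf u_-) \otimes L(\mf l, \mu)$. Inspecting the positive roots listed in \secref{sec:skewsym} (resp.~\secref{sec:symm}) relative to $Y$, one sees that $\mf u_-$ is a \emph{finite} direct sum of irreducible $\mf l$-modules; each summand is the outer tensor product of a trivial, natural, or dual-natural $\mf k$-module and either the natural module or its symmetric/exterior square for the ``tail'' general linear (super)algebra factor of $\mf l$, with the exact type determined by the bilinear form in \secref{sec:skewsym}/\secref{sec:symm}. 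A direct check shows each such summand has highest weight in $P^+$.

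Let $\mc C$ be the full subcategory of $\mf l$-modules which decompose as direct sums of $L(\mf l, \eta)$, $\eta \in P^+$. I would show that $\mc C$ is closed under tensor products: on the finite-dimensional factor $\mf k$ this is Weyl's theorem, and on the infinite rank general-linear tail factor this is the classical Cauchy--Littlewood--Richardson decomposition for polynomial representations (which extends verbatim to $\mf{gl}(\infty|\infty)$ as required in (iii)). Filtering $S(\mf u_-)$ by symmetric degree then expresses each $S^k(\mf u_-)$ as an iterated tensor product of summands of $\mf u_-$, so $S(\mf u_-) \in \mc C$; tensoring with $L(\mf l, \mu) \in \mc C$ yields the decomposition of $\Delta(\mu)$. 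Because each $\mf h$-weight space of $\Delta(\mu)$ is finite dimensional (again by PBW), multiplicities of the $L(\mf l, \nu)$ are well-defined and $\mc C$ is closed under subquotients; since $L(\mu)$ is an $\mf l$-quotient of $\Delta(\mu)$, it too lies in $\mc C$.

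The main obstacle is the analysis of $\mc C$ in part (iii), where the tail factor of $\wt{\mf l}$ is the genuine Lie \emph{superalgebra} $\mf{gl}(\infty|\infty)$ rather than an ordinary Lie algebra. One must verify, using the specific choice of Borel in $\wt{\mf T}$ and the dominance cone cut out by the bijection $\theta$, that the $L(\wt{\mf l}, \nu^\theta)$-indexed subcategory is semisimple and tensor-closed---this is where the careful setup of \secref{sec:OBCD} pays off, identifying $\wt P^+$ with a classical ``polynomial" tensor subcategory parametrised by partitions.
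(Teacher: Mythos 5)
Your argument follows essentially the same route as the paper's: write $\Delta(\mu)\cong U(\mf u_-)\otimes L(\mf l,\mu)$, observe that $\mf u_-$ decomposes into finitely many irreducible $\mf l$-modules with highest weights in the cone $P^+$, and then invoke the fact that the subcategory of $\mf l$-modules (resp.\ $\ov{\mf l}$-, $\wt{\mf l}$-modules) with composition factors indexed by $P^+$ is a semisimple tensor category, so that $U(\mf u_-)\otimes L(\mf l,\mu)$ and its quotient $L(\mu)$ decompose as claimed. Where the paper simply cites \cite[Section 3.2]{CK} for the semisimple tensor category fact, you sketch a proof of it via the (super) Littlewood--Richardson/Cauchy decomposition. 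That is indeed what lies behind \cite{CK}, but the statement that it ``extends verbatim to $\gl(\infty|\infty)$'' and to the $\theta$-parametrized category is itself a theorem (going back to Sergeev and Berele--Regev in finite rank), and you would need to either cite it or spell it out; as written this is the main gap, since the rest of the proof hinges on it.

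Two minor points worth flagging: (a) the phrase that ``each $S^k(\mf u_-)$ \emph{is} an iterated tensor product of summands of $\mf u_-$'' is not literally correct --- $S^k(\mf u_-)$ is a direct summand of $\mf u_-^{\otimes k}$; the argument goes through because $\mc C$ is closed under both tensor products and subquotients (which you do note afterwards), so the wording should be tightened. (b) The paper simply declares part (i) ``clear,'' since there the tail of $\mf l$ is an honest $\gl(\infty)$ and the needed tensor semisimplicity is entirely classical; treating all three parts in parallel as you do is fine, but the real content is in (ii) and (iii) where the super structure enters, exactly as you identify in your last paragraph.
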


\begin{proof}
Part (i) is clear.

The proofs of (ii) and (iii) are analogous, and so we shall only
give the proof for (ii). The $\ov{\mf{l}}$-module $\ov{\mf{u}}_-$
is a direct sum of irreducible modules of the form
$L(\ov{\mf{l}},\nu^\natural)$. Now the category of
$\ov{\mf{l}}$-modules that have an increasing composition series
with composition factors isomorphic to
$L(\ov{\mf{l}},\nu^\natural)$, with $\nu\in P^+$, is a semi-simple
tensor category \cite[Section 3.2]{CK}.  Thus
$\ov{\Delta}(\mu^\natural)\cong U{(}\ov{\mf{u}}_-{)}\otimes
L(\ov{\mf{l}},\mu^\natural)$ also decomposes into a direct sum of
$L(\ov{\mf{l}},\nu^\natural)$ with $\nu\in P^+$, and so does its
irreducible quotient $\ov{L}(\mu^\natural)$.
\end{proof}

Let $\mc{O}$ be the category of $\G$-modules ${M}$ such that ${M}$ is a semisimple
${\h}$-module with finite dimensional weight subspaces $M_\gamma$, $\gamma\in {\h}^*$,
satisfying
\begin{itemize}
\item[(i)] ${M}$ decomposes over ${\mf{l}}$ into a direct sum of
$L({\mf{l}},\mu)$ for $\mu\in {P^+}$.

\item[(ii)] There exist finitely many weights
$\la_1,\la_2,\ldots,\la_k\in{P^+}$ (depending on ${M}$) such that
if $\gamma$ is a weight in ${M}$, then
$\gamma\in\la_i-\sum_{\alpha\in{\Pi}}\Z_+\alpha$, for some $i$.
\end{itemize}
%Then the category $\mc O$ is defined to be
%the direct sum $\mc{O} =\oplus_{d \in \C} \mc{O}(d)$.
The parabolic Verma modules $\Delta(\mu)$ and irreducible modules
$L(\mu)$ for $\mu\in P^+$ lie in $\mc{O}$, by
\lemref{lem:paraverma}. Analogously we define the categories
$\ov{\mc{O}}$
% =\oplus_{d \in \C} \ov{\mc{O}}^d$
and $\wt{\mc{O}}$
%=\oplus_{d \in \C} \wt{\mc{O}}^d$
of $\SG$- and $\DG$-modules, respectively. They also contain suitable parabolic Verma and
irreducible modules. The morphisms in $\mc{O}$, $\ov{\mc{O}}$ and $\wt{\mc{O}}$ are all
(not necessarily even) $\G$-, ${\SG}$- and ${\DG}$-homomorphisms, respectively.

\subsection{The Lie superalgebras $\G_n$, $\SG_n$ and $\DG_n$ of finite rank}
\label{sec:finiterank}

For $n\in\N$, recall the sets $\Pi_n, \ov{\Pi}_n, \wt{\Pi}_n$ of
simple roots for the Dynkin diagrams \eqref{Dynkin:combined}.
%define
%\begin{align*}
%\Pi_n =\Pi\setminus\{\beta_r|r>n\},\quad
%\ov{\Pi}_n=\ov{\Pi}\setminus\{\beta_r|r>n\},\quad
%\wt{\Pi}_n=\wt{\Pi}\setminus\{\alpha_r|r>n\}.
%\end{align*}
The associated Lie superalgebras $\G_n$, $\SG_n$ and $\DG_n$ can be identified naturally
with the subalgebras of $\G$, $\SG$ and $\DG$ generated by $K$ and the root vectors of
the corresponding Dynkin diagrams in (\ref{Dynkin:combined}), and moreover, $\G_n \subset
\G_{n+1}, \SG_n \subset \SG_{n+1}$ for all $n$. Observe that the $\SG_n$'s (modulo the
trivial central extensions) are exactly all the finite dimensional Lie superalgebras of
$\osp$ type. Since $\SG =\cup_n \SG_n$, the standard Cartan subalgebra of $\SG_n$ equals
$\ov{\h}_n=\ov{\h}\cap\SG_n$. Similarly, we use the notation ${\h}_n$ and $\wt{\h}_n$ for
the standard Cartan subalgebras of $\G_n$ and $\DG_n$, respectively.

Recall the notation $\la \in P^+$, $\la^\natural$, and
$\la^\theta$ from \eqnref{weight:Im}, \eqnref{weight:ovIm} and
\eqnref{weight:wtIm}. Given $\la\in P^+$ with $\la^+_j=0$ for
$j>n$, we may regard it as a weight $\la_n \in \h^*_n$ in a
natural way. Similarly, for $\la\in P^+$  with $(\la^+)'_j=0$ for
$j>n$, we regard $\la^\natural$ as a weight $\la^\natural_n \in
\ov{\h}^*_n$. Finally, for $\la\in P^+$ with $\theta(\la^+)_j=0$
for $j>n$, we regard $\la^\theta$ as a weight $\la_n^\theta \in
\wt{\h}^*_n$. The subsets of such weights $\la_n, \la_n^\natural,
\la_n^\theta$ in $\h^*_n$, $\ov{\h}^*_n$ and $\wt{\h}^*_n$ will be
denoted by $P^+_n$, $\bar{P}^+_n$ and $\wt{P}^+_n$, respectively.

The corresponding parabolic Verma and irreducible $\G_n$-modules
are denoted by $\Delta_n (\mu)$ and $L_n(\mu)$, respectively, with
$\mu\in P^+_n$, while the corresponding category of $\G_n$-modules
is denoted by $\mc{O}_n$.
%We then set $\mc{O}_{n}=\oplus_{d \in \C} \mc{O}_n(d)$.
Similarly, we introduce the self-explanatory notations
$\ov{\Delta}_n(\mu^\natural)$, $\ov{L}_n(\mu^\natural)$,
$\ov{\mc{O}}_{n}$, and $\wt{\Delta}_n(\mu^\theta)$,
$\ov{L}_n(\mu^\theta)$, $\wt{\mc{O}}_{n}$ for $\SG_n$- and
$\DG_n$-modules, respectively.

\subsection{The truncation functors}

Let $\infty\ge k>n$. For $M\in \mc{O}_k$, we can write
$M=\bigoplus_{\gamma}M_\gamma$, where $\gamma$ runs over
$\gamma\in\sum_{i=-m}^{-1}\C \epsilon_i+\sum_{0<j\le
k}\C \epsilon_j+ \C\La_0$. The {\em truncation functor}
$$
\mf{tr}^{k}_n:\mc{O}_k \rightarrow\mc{O}_n
$$
is defined by sending $M$ to
$\bigoplus_{\nu}M_\nu$, summed over
$\sum_{i=-m}^{-1}\C \epsilon_i+\sum_{0<j\le n}\C\epsilon_j+ \C\La_0$.
%Then, by taking a direct sum over $d$, we obtain a truncation functor
%$$
%\mf{tr}^{k}_n:\mc{O}_k \longrightarrow\mc{O}_n.
%$$
When it is clear from the context we shall also write $\mf{tr}_n$
instead of $\mf{tr}^k_n$. Analogously, truncation functors
$\mf{tr}^{k}_n:\ov{\mc{O}}_k\rightarrow\ov{\mc{O}}_n $ and
$\mf{tr}^{k}_n:\wt{\mc{O}}_k\rightarrow\wt{\mc{O}}_n$ are defined.

\begin{lem}\label{lem:trunc}
Let $\infty\ge k>n$ and $X=L,\Delta$.
\begin{itemize}
\item[(i)] For $\mu\in P^+_k$ we have $\mf{tr}_n\big{(}X_k
(\mu)\big{)} =
\begin{cases}
X_n (\mu),\quad\text{if }
\langle\mu,\widehat{E}_j\rangle=0,\forall j>n,\\
0,\quad\text{otherwise}.
\end{cases}$

\item[(ii)] For $\mu\in\bar{P}^+_k$ we have
$\mf{tr}_n\big{(}\ov{X}_k (\mu)\big{)}=
\begin{cases}
\ov{X}_n(\mu),\quad\text{if }
\langle\mu,\widehat{E}_j\rangle=0,\forall j>n,\\
0,\quad\text{otherwise}.
\end{cases}$

\item[(iii)] For $\mu\in\wt{P}^+_k$ we have
$\mf{tr}_n\big{(}\wt{X}_k(\mu)\big{)}=
\begin{cases}
\wt{X}_n(\mu),\quad\text{if }\langle\mu,\widehat{E}_j\rangle=0,\forall j>n,\\
0,\quad\text{otherwise}.
\end{cases}$
\end{itemize}
\end{lem}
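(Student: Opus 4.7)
The three items are proved by parallel arguments, so I focus on (i). Fix $\mu \in P^+_k$ and split into cases according to whether the support condition $\langle \mu, \widehat E_j \rangle = 0$ for all $j > n$ holds. When it fails, introduce the statistic
\[
S(\nu) := \sum_{j > n} \langle \nu, \widehat E_j \rangle,
\]
so that $S(\mu) > 0$ while $S$ vanishes on every truncation-region weight. A direct inspection of the positive root lists in \secref{sec:realize} shows that every positive root $\alpha$ of $\G_k$ satisfies $S(\alpha) \le 0$: each $\alpha$ is one of $\pm \epsilon_r - \epsilon_s$ ($r<s$), $-\epsilon_i$, or $-2\epsilon_i$, and in every case its coefficient on any tail index is at most $+1$ and is compensated (or worse) on its larger index. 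Hence for any weight $\nu = \mu - \sum c_\alpha \alpha$ of $\Delta_k(\mu)$ (and thus of its quotient $L_k(\mu)$) with $c_\alpha \in \Z_+$, one gets $S(\nu) \ge S(\mu) > 0$, so $\mf{tr}_n X_k(\mu) = 0$.

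Now assume the support condition holds, so $\mu \in P^+_n \subseteq P^+_k$. For the Verma module, use the PBW factorization $\Delta_k(\mu) = U(\mf{u}_-^{(k)}) \otimes L(\mf{l}_k, \mu)$. The Levi irreducible decomposes as a head factor (an irreducible of the $Y_0$-generated Levi inside $\mf{k}$, unchanged by truncation) tensored with a tail factor $L(\gl_k, \la^+)$; using the Young tableau model, the tail factor truncates to $L(\gl_n, \la^+)$ because only tableaux with entries in $\{1, \ldots, n\}$ contribute weights supported on the first $n$ indices. For the enveloping factor, every weight of $\mf{u}_-^{(k)}$ is non-negative on $\{\epsilon_j : j > n\}$, so positive contributions from distinct PBW factors add and never cancel; hence a monomial that lands in the truncation region must use only root vectors whose underlying root is supported on head and first $n$ tail indices. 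Combining, $\mf{tr}_n \Delta_k(\mu) \cong U(\mf{u}_-^{(n)}) \otimes L(\mf{l}_n, \mu) = \Delta_n(\mu)$ as $\G_n$-modules.

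For the irreducible $L_k(\mu)$, applying the exact functor $\mf{tr}_n$ to $0 \to J_k \to \Delta_k(\mu) \to L_k(\mu) \to 0$ yields $\mf{tr}_n L_k(\mu) = \Delta_n(\mu)/\mf{tr}_n J_k$, a highest weight $\G_n$-module that surjects onto $L_n(\mu)$. A refinement of the cascade argument (applied now to all of $\mf{n}_-^{(k)}$: a positive $\epsilon_j$-contribution for $j > n$ can only come from a root $\epsilon_j - \epsilon_s$ with $s > j$, which introduces a fresh negative contribution at $\epsilon_s$, so no finite PBW monomial can balance) gives $\mf{tr}_n L_k(\mu) = U(\G_n) v_\mu$. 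The main difficulty is to upgrade this surjection to an isomorphism $\mf{tr}_n L_k(\mu) \cong L_n(\mu)$: I would show that $v_\mu$ is, up to scalar, the unique $\G_n$-singular vector in $\mf{tr}_n L_k(\mu)$, which forces irreducibility since any nonzero submodule in $\mc O_n$ contains a highest weight vector. The obstruction is that a vector killed by $\mf{n}_+^{(n)}$ need not be killed by the extra root vectors in $\mf{n}_+^{(k)} \setminus \mf{n}_+^{(n)}$; I plan to control these via the $\mf{l}_k$-isotypic decomposition of $L_k(\mu)$ from \lemref{lem:paraverma} matched against the decomposition of $L_n(\mu)$ under $\mf{l}_n$, or alternatively, using the transitivity $\mf{tr}_n = \mf{tr}_n \circ \mf{tr}_k$ to reduce to the $k = \infty$ case and exploit $\G$-irreducibility of $L(\mu)$ via a direct-limit argument. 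This irreducibility step is the heart of the lemma.
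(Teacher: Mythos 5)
Your handling of the vanishing case (via the statistic $S$) and of the parabolic Verma $\Delta_k(\mu)$ (via the PBW factorization and the non-cancellation of $\epsilon_j$-contents for $j>n$) is correct and runs along the same weight-bookkeeping lines that the paper implicitly uses. You also correctly flag the irreducibility step for $L_k(\mu)$ as the real difficulty. However, that step is left as a sketch of two unresolved ideas, and neither is developed far enough to close the gap: the ``match isotypic decompositions'' idea is not made precise, and the ``reduce to $k=\infty$ and use a direct limit'' idea does not by itself explain why $\mathfrak{tr}_n L(\mu)$ has no extra $\G_n$-singular vectors. So as written, (i) for $X=L$ is not proved.

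The paper closes this gap with a different choice of induction, and it is worth internalizing because it sidesteps exactly the difficulty you identified. After reducing to $k=\infty$ by transitivity (as you suggest), one does \emph{not} realize $L(\mu)$ as a quotient of $\Delta(\mu)=\mathrm{Ind}_{\mathfrak p}^{\G}L(\mathfrak l,\mu)$; instead one removes the single simple root $\beta_n$ from the Dynkin diagram of $\G$, obtaining a Levi $\mathfrak l'\cong \G_n\oplus\gl(\infty)$ and a parabolic $\mathfrak p'=\mathfrak l'+\mathfrak b$. Then $L(\mu)$ is also the unique irreducible quotient of $M':=\mathrm{Ind}_{\mathfrak p'}^{\G}L_n(\mu)$, where $L_n(\mu)$ is inflated trivially across $\gl(\infty)$. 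The opposite nilradical $\mathfrak u'_-$ of $\mathfrak p'$ has the feature (absent for $\mathfrak u_-$) that \emph{every} nonzero weight of $U(\mathfrak u'_-)$ has strictly positive total $\epsilon_j$-content over $j>n$, with no possibility of cancellation: indeed every positive root outside $\Phi(\mathfrak l')$ has strictly negative total content there, so its negative has strictly positive content. Hence $\mathfrak{tr}_n M'=L_n(\mu)$ on the nose. Since $\mathfrak{tr}_n$ is exact and $M'\twoheadrightarrow L(\mu)$, the module $\mathfrak{tr}_n L(\mu)$ is a nonzero quotient of the irreducible $L_n(\mu)$, forcing $\mathfrak{tr}_n L(\mu)=L_n(\mu)$ with no further argument. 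In short: your approach gives you a surjection $\Delta_n(\mu)\twoheadrightarrow\mathfrak{tr}_n L_k(\mu)$, so you still owe an irreducibility proof; switching to the parabolic $\mathfrak p'$ gives a surjection from an object whose truncation is \emph{already} $L_n(\mu)$, so the irreducibility of the target is free. I'd replace your irreducibility sketch with this argument.
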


\begin{proof}
We will show (i) only. The proofs of (ii) and (iii) are similar.

Since $\mf{tr}^k_n\circ\mf{tr}^l_k=\mf{tr}^l_n$, it is enough to show (i) for $k=\infty$.
Suppose that $\langle\mu,\widehat{E}_{j}\rangle=0$ for all $j>n$. Let $\mf {l}'$ denote
the standard Levi subalgebra of $\G$ corresponding to the removal of the vertex
$\beta_{n} $ of the Dynkin diagram of $\G$. Then $\mf{l}'\cong \G_n\oplus
\mf{gl}(\infty)$. Now $L(\mu)$ is the unique irreducible quotient of the $\G$-module
obtained via parabolic induction from the $\mf{l}'$-module $L_n(\mu)$ (where the
$\G_n$-module $L_n(\mu)$ is extended to an $\mf{l}'$-module by a trivial action of
$\mf{gl}(\infty)$). Our choice of the Levi subalgebra and of the opposite nilradical
assures that this parabolically induced module truncates to $L_n(\mu)$. Thus its
irreducible quotient $L(\mu)$ also truncates to $L_n(\mu)$. The remaining case in (i) is
clear.
\end{proof}

\begin{rem}
The central extensions introduced in Section \ref{sec:ext} allow us to study, in a
uniform fashion, modules whose weights stabilize at any $d\in\C$ (not just at $d=0$). For
example, if $\mu$ is a weight with $\mu(E_{r})=d\not=0$, for $r\gg 0$, then, without
central extensions, the usual truncation functors would always truncate an irreducible or
parabolic Verma of such a highest weight to zero. A way around central extensions is to
define truncation functors depending on each $d\in\C$. This approach, although
equivalent, looks less elegant.
\end{rem}

\section{The character formulas}\label{sec:character}

In this section we introduce two functors $T: \wt{\mc
O} \rightarrow \mc O$ and $\ov{T}: \wt{\mc O} \rightarrow \ov{\mc
O}$,
and then establish a fundamental property of these functors
(Theorem~\ref{matching:modules}). As a consequence, we obtain an
irreducible $\osp$-character formula in a parabolic category $O$
in terms of the KL polynomials of $BCD$ types
(Theorem~\ref{character} and Remark~\ref{rem:sol}).
The Kazhdan-Lusztig polynomials
in the categories $\mc O$, $\ov{\mc O}$ and $\wt{\mc O}$ in terms
of Kostant $\mf u$-homology groups are shown
to match perfectly with one another (\thmref{matching:KLpol}).

\subsection{Odd reflections}\label{odd}

Let $\mc G$ be a Lie superalgebra with a Borel subalgebra $\mc{B}$
with corresponding sets of simple and positive roots $\Pi(\mc{B})$
and $\Phi_+(\mc{B})$, respectively. As usual, for a positive root
$\beta$, we let $f_\beta$ denote a root vector associated to root
$-\beta$.

Let $\alpha$ be an isotropic odd simple root in  $\Pi(\mc{B})$ and $h_\alpha$ be its
corresponding coroot. The set $\Phi_+(\mc{B}^\alpha)
:=\{-\alpha\}\cup\Phi_+(\mc{B})\setminus\{\alpha\}$ forms a new
set of positive roots whose corresponding set of simple roots is
\begin{align*}
\Pi(\mc{B}^\alpha)
=\{\beta\in\Pi(\mc{B})|\langle\beta,h_\alpha\rangle=0, \beta \neq
\alpha\}\cup
\{\beta+\alpha|\beta\in\Pi(\mc{B}),\langle\beta,h_\alpha
\rangle\not=0\} \cup\{-\alpha\}.
\end{align*}
We shall denote by $\mc B^\alpha$ the corresponding new Borel
subalgebra. The process of such a change of Borel subalgebras is
referred to as {\em odd reflection} with respect to $\alpha$
\cite{LSS}.

The following simple and fundamental lemma for odd reflections has
been used by many authors (cf. e.g. \cite{KW}).

\begin{lem}  \label{hwt odd}
Let $L$ be a simple $\mc G$-module of $\mc B$-highest weight $\la$
and let $v$ be a $\mc B$-highest weight vector. Let $\alpha$ be a simple
isotropic odd root in  $\Pi(\mc{B})$.
\begin{enumerate}
\item If $\langle \la, h_\alpha \rangle = 0$, then $L$ is a $\mc
G$-module of $\mc B^\alpha$-highest weight $\la$  and $v$ is a
$\mc B^\alpha$-highest weight vector.

\item If $\langle \la, h_\alpha \rangle \neq 0$, then $L$ is a
$\mc G$-module of $\mc B^\alpha$-highest weight $\la -\alpha$ and
$f_\alpha v$ is a $\mc B^\alpha$-highest weight vector.
\end{enumerate}
\end{lem}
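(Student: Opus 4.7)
The plan is to verify directly that the proposed highest weight vector---namely $v$ in case (1), and $f_\alpha v$ in case (2)---is annihilated by every simple root vector for $\mc B^\alpha$, and then to invoke simplicity of $L$.

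First I would record two structural facts about the isotropic odd simple root $\alpha$: the relations $f_\alpha^2=0$ and $[e_\alpha,f_\alpha]=h_\alpha$ (the former because $2\alpha$ is not a root for isotropic odd $\alpha$), and the vanishing $[e_\beta,f_\alpha]=0$ for any simple root $\beta\neq\alpha$, since differences of distinct simple roots are not roots. A short weight-cone argument extends this: for every positive root $\beta\neq\alpha$, the weight $\beta-\alpha$ of $[e_\beta,f_\alpha]$ is either a positive root (when $\alpha$ occurs with positive coefficient in $\beta$) or not a root at all (when $\alpha$ does not occur). Consequently $e_\beta(f_\alpha v)=[e_\beta,f_\alpha]v=0$ for all positive $\beta\neq\alpha$, while $e_\alpha(f_\alpha v)=h_\alpha v=\langle\la,h_\alpha\rangle v$.

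For part (1), where $\langle\la,h_\alpha\rangle=0$, the above shows that $f_\alpha v$ is annihilated by every positive root vector for $\mc B$. Were $f_\alpha v$ nonzero, it would be a $\mc B$-highest weight vector of weight $\la-\alpha\neq\la$ in the simple module $L$, contradicting the fact that every $\mc B$-singular vector in a simple highest weight module has weight $\la$. Hence $f_\alpha v=0$. The simple root vectors for $\mc B^\alpha$ fall into three types---$e_\beta$ for $\beta\in\Pi(\mc B)$ with $\langle\beta,h_\alpha\rangle=0$, root vectors proportional to $[e_\beta,e_\alpha]$ for the roots $\beta+\alpha$, and $f_\alpha$ itself---and all of them annihilate $v$, so $v$ is a $\mc B^\alpha$-highest weight vector of weight $\la$.

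For part (2), where $\langle\la,h_\alpha\rangle\neq 0$, the identity $e_\alpha(f_\alpha v)=\langle\la,h_\alpha\rangle v$ forces $f_\alpha v\neq 0$. The same three types of simple root vectors for $\mc B^\alpha$ now annihilate $f_\alpha v$: the first by $[e_\beta,f_\alpha]=0$, the second via the super Jacobi computation $[[e_\beta,e_\alpha],f_\alpha]=-\langle\beta,h_\alpha\rangle e_\beta$ which kills $v$, and the third by $f_\alpha^2=0$. Since $L$ is simple, the nonzero $\mc B^\alpha$-singular vector $f_\alpha v$ generates $L$, which is therefore a $\mc B^\alpha$-highest weight module of highest weight $\la-\alpha$. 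The main technical point, invoked essentially in case (1), is the weight-cone analysis establishing $[e_\beta,f_\alpha]v=0$ for all positive $\beta\neq\alpha$.
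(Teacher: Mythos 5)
The paper states this lemma without proof, calling it "simple and fundamental" and citing \cite{KW}, so there is no proof in the paper to compare against. Your direct verification is correct and self-contained. The key steps all check out: the weight-cone analysis showing $[e_\beta,f_\alpha]v=0$ for all positive $\beta\neq\alpha$ (because $\beta-\alpha$ is then either positive or a non-root), the identity $e_\alpha f_\alpha v = \langle\la,h_\alpha\rangle v$ that cleanly separates the two cases and forces $f_\alpha v=0$ in case (1) (via uniqueness of singular weights in a simple highest weight module) and $f_\alpha v\neq 0$ in case (2), the vanishing $f_\alpha^2=0$ from $2\alpha$ not being a root, and the super Jacobi computation $[[e_\beta,e_\alpha],f_\alpha]=-\langle\beta,h_\alpha\rangle e_\beta$ for the new simple root vectors $\beta+\alpha$. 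One point you use silently: to pass from "annihilated by the simple root vectors of $\mc B^\alpha$" to "$\mc B^\alpha$-singular," you need that the nilradical of $\mc B^\alpha$ is generated by its simple root vectors; this holds for the contragredient Lie superalgebras in play here, so the argument is complete. This is the standard proof of the odd reflection lemma, and your write-up of it is accurate.
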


\subsection{The Borel subalgebras $\wt{\mf{b}}^{c}(n)$ and $\wt{\mf{b}}^{s}(n)$}
\label{oddreflection}

%\subsubsection{}\label{odd:reflection:ccdd}

Fix $n\in \N$.
Starting with the third Dynkin diagram in \eqref{Dynkin:combined}
associated to $\DG$, we apply the following sequence of
$\frac{n(n+1)}{2}$ odd reflections. First we apply one odd
reflection corresponding to $\alpha_{1/2}$, then we apply two odd
reflections corresponding to $\alpha_{3/2}$ and
$\alpha_{1/2}+\alpha_1+\alpha_{3/2}$.  After that we apply three
odd reflections corresponding to $\alpha_{5/2}$,
$\alpha_{3/2}+\alpha_{2}+\alpha_{5/2}$, and
$\alpha_{1/2}+\alpha_1+\alpha_{3/2}+\alpha_{2}+\alpha_{5/2}$, et
cetera, until finally we apply $n$ odd reflections corresponding
to $\alpha_{n-1/2},\alpha_{n-3/2}+\alpha_{n-1}
+\alpha_{n-1/2},\ldots,\sum_{i=1}^{2n-1}\alpha_{i/2}$. The
resulting new Borel subalgebra for $\DG$ will be denoted by
$\wt{\mf{b}}^{c}(n)$ and its corresponding simple roots are listed in
the following Dynkin diagram:
%being
%\begin{equation*}
%\wt{\Pi}^{c}(n)
%=\Pi(\mf{k})\cup\{\beta_{-1},\beta_1,\ldots,\beta_{n-1};
%-\sum_{i=1}^{2n-1}\alpha_{i/2}; \beta_{1/2},\ldots,
%\beta_{n-1/2};\alpha_{n+1/2},\alpha_{n+1},\ldots\}.
%\end{equation*}
%
%If we use
%\makebox(26,0){$\oval(26,15)$}\makebox(-24,8){$\G_{n-1}$} to
%denote the Dynkin diagram of the Lie algebra $\G_{n-1}$ with
%simple roots
%$\Pi(\mf{k})\cup\{\beta_{-1},\beta_1,\ldots,\beta_{n-1}\}$, then
%the Dynkin diagram corresponding to $\wt{\mf{b}}^{c}(n)$ is the
%following.
\begin{center}
\hskip -6cm \setlength{\unitlength}{0.16in}
\begin{picture}(24,4)
\put(15.25,2){\makebox(0,0)[c]{$\bigotimes$}}
\put(17.4,2){\makebox(0,0)[c]{$\bigcirc$}}
\put(21.9,2){\makebox(0,0)[c]{$\bigcirc$}}
\put(24.4,2){\makebox(0,0)[c]{$\bigotimes$}}
\put(26.8,2){\makebox(0,0)[c]{$\bigotimes$}}
\put(13.2,2){\line(1,0){1.45}} \put(15.7,2){\line(1,0){1.25}}
\put(17.8,2){\line(1,0){0.9}} \put(20.1,2){\line(1,0){1.4}}
\put(22.35,2){\line(1,0){1.6}} \put(24.9,2){\line(1,0){1.5}}
\put(27.3,2){\line(1,0){1.5}}
\put(19.5,1.95){\makebox(0,0)[c]{$\cdots$}}
\put(29.7,1.95){\makebox(0,0)[c]{$\cdots$}}
\put(15.2,3){\makebox(0,0)[c]{\tiny $-\sum_{i=1}^{2n-1}\alpha_{i/2}$}}
\put(17.8,1){\makebox(0,0)[c]{\tiny $\beta_{1/2}$}}
\put(22,1){\makebox(0,0)[c]{\tiny $\beta_{n-1/2}$}}
\put(24.4,1){\makebox(0,0)[c]{\tiny $\alpha_{n+1/2}$}}
\put(27,1){\makebox(0,0)[c]{\tiny $\alpha_{n+1}$}}
%\put(11.1,1.3){{\ovalBox(2.2,1.5){$\G_{n-1}$}}}
\put(8.0,2){\makebox(0,0)[c]{{\ovalBox(1.6,1.2){$\mf{k}$}}}}
\put(8.8,2){\line(1,0){1.7}}
\put(11.8,2){\makebox(0,0)[c]{{\ovalBox(2.6,1.2){$\mf{T}_{n}$}}}}
\end{picture}
\end{center}
The crucial point here is that the subdiagram to the left of
the first $\bigotimes$ is the Dynkin diagram of $\G_{n}$.

On the other hand, starting with the third Dynkin diagram in
\eqref{Dynkin:combined} associated to $\DG$, we apply the
following new sequence of $\frac{n(n+1)}{2}$ odd reflections.
First we apply one odd reflection corresponding to $\alpha_{1}$,
then we apply two odd reflections corresponding to $\alpha_{2}$
and $\alpha_{1}+\alpha_{3/2}+\alpha_2$.  After that we apply three
odd reflections corresponding to $\alpha_{3}$,
$\alpha_{2}+\alpha_{5/2}+\alpha_3$, and
$\alpha_1+\alpha_{3/2}+\alpha_{2}+\alpha_{5/2}+\alpha_3$, et
cetera, until finally we apply $n$ odd reflections corresponding
to $\alpha_{n},\alpha_{n-1}+\alpha_{n-1/2}
+\alpha_{n},\ldots,\sum_{i=2}^{2n}\alpha_{i/2}$. The resulting new
Borel subalgebra for $\DG$ will be denoted by $\wt{\mf{b}}^{s}(n)$
and its corresponding simple roots are listed in the following
Dynkin diagram:
%\begin{equation*}
%\wt{\Pi}^{s}(n)=\Pi(\mf{k})\cup\{\alpha_{-1},\beta_{1/2},\ldots
%\beta_{n-1/2};-\sum_{i=2}^{2n}\alpha_{i/2};\beta_1,\ldots,\beta_{n};
%\alpha_{n+1},\alpha_{n+3/2},\ldots\}.
%\end{equation*}
%The corresponding Dynkin diagram is as follows.
\begin{center}
\hskip -6cm \setlength{\unitlength}{0.16in}
\begin{picture}(24,4)
\put(15.25,2){\makebox(0,0)[c]{$\bigotimes$}}
\put(17.4,2){\makebox(0,0)[c]{$\bigcirc$}}
\put(21.9,2){\makebox(0,0)[c]{$\bigcirc$}}
\put(24.4,2){\makebox(0,0)[c]{$\bigotimes$}}
\put(26.8,2){\makebox(0,0)[c]{$\bigotimes$}}
\put(13.28,2){\line(1,0){1.45}}
\put(15.7,2){\line(1,0){1.25}} \put(17.8,2){\line(1,0){0.9}}
\put(20.1,2){\line(1,0){1.4}}
\put(22.35,2){\line(1,0){1.6}}
\put(24.9,2){\line(1,0){1.5}}
\put(27.3,2){\line(1,0){1.5}}
\put(19.5,1.95){\makebox(0,0)[c]{$\cdots$}}
\put(29.7,1.95){\makebox(0,0)[c]{$\cdots$}}
\put(15.5,3){\makebox(0,0)[c]{\tiny $-\sum_{i=2}^{2n}\alpha_{i/2}$}}
\put(17.8,1){\makebox(0,0)[c]{\tiny $\beta_{1}$}}
\put(22,1){\makebox(0,0)[c]{\tiny $\beta_{n}$}}
\put(24.4,1){\makebox(0,0)[c]{\tiny $\alpha_{n+1}$}}
\put(27,1){\makebox(0,0)[c]{\tiny $\alpha_{n+3/2}$}}
%\put(11.1,1.3){{\ovalBox(2.0,1.3){$\SG_{n}$}}}
\put(8.0,2){\makebox(0,0)[c]{{\ovalBox(1.6,1.2){$\mf{k}$}}}}
\put(8.8,2){\line(1,0){1.7}}
\put(11.8,2){\makebox(0,0)[c]{{\ovalBox(2.6,1.2){$\ov{\mf{T}}_{n+1}$}}}}
\end{picture}
\end{center}
We remark that the subdiagram to the left of the odd simple
root $-\sum_{i=2}^{2n}\alpha_{i/2}$ above becomes the Dynkin
diagram of $\SG_{n+1}$.
%The simple roots of
%\makebox(22,0){$\oval(22,15)$}\makebox(-22,8){$\SG_{n}$} are
%$\Pi(\mf{k})\cup\{\alpha_{-1},\beta_{1/2},\ldots \beta_{n-1/2}\}$.

%
%
\subsection{Highest weights with respect to $\wt{\mf{b}}^{c}(n)$
and $\wt{\mf{b}}^{s}(n)$}

Recall the standard Levi subalgebra $\wt{\mf{l}}$ of $\DG$ with (opposite) nilradical
$\wt{\mf{u}}$ and $\wt{\mf{u}}_-$ (see Section~\ref{sec:OBCD}).
\begin{lem}\label{lem:uinvar}
The sequences of odd reflections in \ref{oddreflection}
leave the sets of roots of $\wt{\mf{u}}$
and $\wt{\mf{u}}_-$ invariant.
\end{lem}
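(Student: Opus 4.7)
The plan is to show that every root used in the two odd reflection sequences in Section \ref{oddreflection} is a root of the Levi subalgebra $\wt{\mf l}$, and then deduce the lemma from the general principle that an odd reflection only flips the sign of the single simple root one reflects through. Recall the effect on positive roots,
\[
\Phi_+(\mc B^\alpha) \;=\; \{-\alpha\} \,\cup\, \Phi_+(\mc B)\setminus\{\alpha\},
\]
so all other root spaces keep their sign. Since $\wt{\mf l}$ is a fixed subalgebra of $\wt{\G}$ (independent of the chosen Borel), if $\alpha$ happens to be a Levi root then both $\alpha$ and $-\alpha$ remain in $\wt{\mf l}$; hence the set of non-Levi positive roots (the roots of $\wt{\mf u}$) and the set of non-Levi negative roots (the roots of $\wt{\mf u}_-$) are both unaffected by the reflection.

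Thus the whole proof reduces to the combinatorial assertion that every reflection root in the two sequences lies in $\wt{\mf l}$. First I would identify each reflection root explicitly. Using $\alpha_{k/2}=\epsilon_{k/2}-\epsilon_{(k+1)/2}$ for $k\in\N$, every consecutive sum telescopes:
\[
\alpha_{i/2}+\alpha_{(i+1)/2}+\cdots+\alpha_{j/2}\;=\;\epsilon_{i/2}-\epsilon_{(j+1)/2}.
\]
In particular, for $\wt{\mf b}^{c}(n)$ each reflection root has the form $\epsilon_r-\epsilon_s$ with $r,s\in\{\tfrac12,1,\tfrac32,\ldots,n\}$, and for $\wt{\mf b}^{s}(n)$ each reflection root has the form $\epsilon_r-\epsilon_s$ with $r,s\in\{1,\tfrac32,\ldots,n+\tfrac12\}$.

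Next I would note that any root of the form $\epsilon_r-\epsilon_s$ with $r,s$ positive half-integers or integers (and $r\neq s$) is a root of $\wt{\mf l}$. By definition, $\wt{Y}$ contains every tail simple root except the leftmost one $\alpha_\times$; in particular $\{\alpha_{1/2},\alpha_1,\alpha_{3/2},\alpha_2,\ldots\}\subseteq \wt{Y}$. Since $\epsilon_r-\epsilon_s$ is a $\Z$-linear combination of consecutive such $\alpha_{k/2}$'s (positive if $r<s$, negative if $r>s$), it belongs to the Levi root system. Consequently every odd reflection in both sequences is with respect to a root of $\wt{\mf l}$, and the lemma follows from the general principle above.

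The main (and only) obstacle is bookkeeping: one must verify that the rather long list of reflection roots really is exhausted by sums of consecutive $\alpha_{k/2}$'s from the tail, so that they all telescope into the form $\epsilon_r-\epsilon_s$ with $r,s$ positive. Since the reflection roots are given explicitly in Section \ref{oddreflection}, this is a direct inspection and poses no serious difficulty.
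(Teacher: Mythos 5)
Your proposal is correct and takes essentially the same approach as the paper. The paper's proof is a one-line observation that all odd-reflection roots used in Section \ref{oddreflection} are roots of $\wt{\mf l}$; you simply spell out the telescoping computation and the general fact that an odd reflection through a Levi root fixes the nilradical and its opposite, which is exactly the reasoning being invoked.
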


\begin{proof}
This follows from the fact that the simple roots used in the
sequences of odd reflections in Section~ \ref{oddreflection}
are all roots of $\wt{\mf{l}}$.
\end{proof}

We denote by $\wt{\mf{b}}^{c}_{\wt{\mf{l}}}(n)$ and
$\wt{\mf{b}}^{s}_{\wt{\mf{l}}}(n)$ the Borel subalgebras of
$\wt{\mf{l}}$ corresponding to the sets of simple roots
$\wt{\Pi}^{c}(n)\cap\sum_{\alpha\in\wt{Y}}\Z\alpha$ and
$\wt{\Pi}^{s}(n)\cap\sum_{\alpha\in\wt{Y}}\Z\alpha$, respectively.
The sequences of odd reflections in \ref{oddreflection} only affect the tail diagram
\makebox(20,0){$\oval(20,14)$}\makebox(-20,8){$\wt{\mf{T}}$} and
leaves the head diagram
\makebox(20,0){$\oval(20,12)$}\makebox(-20,7){$\mf{k}$} invariant.
Since the tail diagram is of type $A$, the proofs of \cite[Lemma
3.2]{CL} and \cite[Corollary 3.3]{CL} can be adapted in a
straightforward way to prove the following
(where Lemma~\ref{lem:uinvar} is used).

\begin{prop}\label{prop:change} Let $\la\in P^+$ and $n\in\N$.
\begin{itemize}
\item[(i)] Suppose that $\ell(\la_+)\le n$. Then the highest
weight of $L(\wt{\mf{l}},\la^\theta)$ with respect to the Borel
subalgebra $\wt{\mf{b}}^{c}_{\wt{\mf{l}}}(n)$ is $\la$.
Furthermore, $\wt{\Delta}(\la^\theta)$ and $\wt{L}(\la^\theta)$
are highest weight $\DG$-modules of highest weight $\la$ with
respect to the new Borel subalgebra $\wt{\mf{b}}^{c}(n)$.
%Here $\la$ is regarded as an element in $\wt{\h}^{*}$.

\item[(ii)] Suppose that $\ell(\la_+')\le n$. Then the highest
weight of $L(\wt{\mf{l}},\la^\theta)$ with respect to the Borel
subalgebra $\wt{\mf{b}}^{s}_{\wt{\mf{l}}}(n)$ is $\la^\natural$.
Furthermore, $\wt{\Delta}(\la^\theta)$ and $\wt{L}(\la^\theta)$
are highest weight $\DG$-modules of highest weight $\la^\natural$
with respect to the new Borel subalgebra $\wt{\mf{b}}^{s}(n)$.
%Here $\la^\natural$ is regarded as an element in $\wt{\h}^{*}$.
\end{itemize}
\end{prop}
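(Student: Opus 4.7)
The plan is to reduce Proposition~\ref{prop:change} to the analogous type-$A$ result already established in \cite{CL}, by exploiting the crucial fact that every one of the $\frac{n(n+1)}{2}$ simple roots used in the sequences of odd reflections of Section~\ref{oddreflection} lies in $\sum_{\alpha\in\wt Y}\Z\alpha$, i.e., is a root of the Levi subalgebra $\wt{\mf l}$. Since the head diagram \makebox(23,0){$\oval(20,12)$}\makebox(-20,7){$\mf k$} is untouched by these reflections, the whole combinatorial content of the proposition lives in the type-$A$ tail.

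First, I would combine Lemma~\ref{lem:uinvar} with this observation to show that the new Borel subalgebras admit triangular decompositions compatible with the original parabolic structure: $\wt{\mf b}^{c}(n)=\wt{\mf b}^{c}_{\wt{\mf l}}(n)+\wt{\mf u}$ and $\wt{\mf b}^{s}(n)=\wt{\mf b}^{s}_{\wt{\mf l}}(n)+\wt{\mf u}$, while $\wt{\mf u}$ and $\wt{\mf u}_-$ remain the same. Consequently $\wt\Delta(\la^\theta)=U(\wt{\mf u}_-)\otimes L(\wt{\mf l},\la^\theta)$ as a vector space, and its $\wt{\mf b}^{c}(n)$-highest (resp.\ $\wt{\mf b}^{s}(n)$-highest) weight coincides with the $\wt{\mf b}^{c}_{\wt{\mf l}}(n)$-highest (resp.\ $\wt{\mf b}^{s}_{\wt{\mf l}}(n)$-highest) weight of the Levi module $L(\wt{\mf l},\la^\theta)$; the same applies to the irreducible quotient $\wt L(\la^\theta)$. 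This reduces both parts of the proposition to tracking a highest weight of a $\wt{\mf l}$-module.

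Next, I would perform this tracking by iterated application of Lemma~\ref{hwt odd}. At each odd reflection with simple root $\alpha$ the highest weight either remains unchanged (when $\langle\mu,h_\alpha\rangle=0$) or is shifted by $-\alpha$ (otherwise). Starting from $\la^\theta$, whose tail coordinates are the modified Frobenius entries $\theta(\la^+)$, one checks inductively that the cumulative effect of the prescribed triangular sequence of reflections is precisely to convert $\theta(\la^+)$ into $\la^+$ for the $\wt{\mf b}^{c}(n)$-sequence (this is the assertion of (i)), and into $(\la^+)'$ shifted to the half-integer indices for the $\wt{\mf b}^{s}(n)$-sequence (this is the assertion of (ii)). Under the hypotheses $\ell(\la^+)\le n$ or $\ell((\la^+)')\le n$, the pairings $\langle\mu,h_\alpha\rangle$ behave exactly as required for the rearrangement to terminate at the expected weight; these are precisely the combinatorial assertions of \cite[Lemma~3.2]{CL} and \cite[Corollary~3.3]{CL}, respectively, so the computations carry over verbatim, noting that the head coordinates $\la_{-m},\ldots,\la_{-1}$ and the level $d$ are inert under every reflection in question.

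The main obstacle is purely bookkeeping: one must verify that at each of the $\frac{n(n+1)}{2}$ steps the relevant coroot pairing with the current highest weight is nonzero in exactly the right pattern, so that the shifts accumulate to replace the half-integer modified-Frobenius data by the integer partition data (or its conjugate). Because the reflections are confined to the type-$A$ tail and do not see the $\mf k$-head, no new superalgebraic phenomena appear beyond what was already handled in the type-$A$ setting of \cite{CL}; hence the \emph{straightforward adaptation} claimed in the paragraph preceding the proposition genuinely suffices.
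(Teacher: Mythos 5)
Your proof follows exactly the same strategy the paper intends: reduce the $\DG$-module statements to the Levi module via Lemma~\ref{lem:uinvar} and the invariance of $\wt{\mf u}$, then track the highest weight through the triangular sequence of odd reflections by Lemma~\ref{hwt odd}, invoking the type-$A$ combinatorics of \cite[Lemma~3.2, Corollary~3.3]{CL} for the bookkeeping. The paper gives no more detail than you do — it simply cites Lemma~\ref{lem:uinvar} and the two results of \cite{CL} — so your write-up, if anything, makes the parabolic decomposition step more explicit than the original.
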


\subsection{The functors $T$ and $\ov{T}$}\label{Tfunctors}
\label{sec:T}

By definition, $\G$ and $\SG$ are naturally subalgebras of $\DG$, $\mf l$ and
$\ov{\mf{l}}$ are subalgebras of $\wt{\mf{l}}$, while $\h$ and $\ov{\h}$ are subalgebras
of $\wt{\h}$. Also, we may regard ${\h}^* \subset \wt{\h}^*$ and $\ov{\h}^* \subset
\wt{\h}^*$.

Given a semisimple $\wt{\h}$-module
$\wt{M}=\bigoplus_{\gamma\in\wt{\h}^*}\wt{M}_\gamma$, we define
\begin{align*}
T(\wt{M}):= \bigoplus_{\gamma\in{{\h}^*}}\wt{M}_\gamma,\qquad
\hbox{and}\qquad \ov{T}(\wt{M}):=
\bigoplus_{\gamma\in{\ov{\h}^*}}\wt{M}_\gamma.
\end{align*}
Note that $T(\wt{M})$ is an ${\h}$-submodule of the $\wt{M}$, and
$\ov{T}(\wt{M})$ is an $\ov{\h}$-submodule of $\wt{M}$. One checks
that if $\wt{M}=\bigoplus_{\gamma\in\wt{\h}^*}\wt{M}_\gamma$ is an
$\wt{\mf{l}}$-module, then $T(\wt{M})$ is an ${\mf{l}}$-submodule
of $\wt{M}$ and $\ov{T}(\wt{M})$ is an $\ov{\mf{l}}$-submodule of
$\wt{M}$. Furthermore, if
$\wt{M}=\bigoplus_{\gamma\in\wt{\h}^*}\wt{M}_\gamma$ is a
$\DG$-module, then $T(\wt{M})$ is a ${\G}$-submodule of $\wt{M}$
and $\ov{T}(\wt{M})$ is a $\ov{\G}$-submodule of $\wt{M}$.

The direct sum decomposition in $\wt{M}$ gives rise to the natural
projections
\begin{eqnarray*}
\CD
 T_{\wt{M}} :  \wt{M} @>>>T(\wt{M}) \qquad
 \hbox{and}\qquad  \ov{T}_{\wt{M}} :  \wt{M} @>>>\ov{T}(\wt{M})
 \endCD
\end{eqnarray*}
that are ${\h}$- and $\ov{\h}$-module homomorphisms, respectively.
If $\wt{f}:\wt{M}\rightarrow \wt{N}$ is an
$\wt{\h}$-homomorphism, then the following induced maps
\begin{eqnarray*}
 \CD
T[\wt{f}] :  T(\wt{M}) @>>>T(\wt{N}) \qquad \hbox{and}\qquad
\ov{T}[\wt{f}] :  \ov{T}(\wt{M}) @>>>\ov{T}(\wt{N})
 \endCD
\end{eqnarray*}
are also ${\h}$- and $\ov{\h}$-module homomorphisms, respectively.
Also if $\wt{f}:\wt{M}\rightarrow \wt{N}$
is a ${\wt{\G}}$-homomorphism, then $T_{\wt{M}}$ and $T[\wt{f}]$
(respectively, $\ov{T}_{\wt{M}}$ and $\ov{T}[\wt{f}]$) are ${\G}$-
(respectively, $\ov{\G}$-) homomorphisms.

\begin{lem}\label{lmod2lmod}
For $\la\in P^+$, we have
$T\big{(}L(\wt{\mf{l}},\la^\theta)\big{)} = L({\mf{l}},\la)$, and
$\ov{T}\big{(}L(\wt{\mf{l}},\la^\theta)\big{)}=
L(\ov{\mf{l}},\la^\natural)$.
\end{lem}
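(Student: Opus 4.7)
The plan is to reduce the statement to the pure type~$A$ case proved in~\cite{CL}, by exploiting a direct-sum decomposition of the Levi subalgebras. Inspection of the combined Dynkin diagrams in~\eqref{Dynkin:combined} and of the Levis defined in Section~\ref{sec:OBCD} shows that each of $Y$, $\ov Y$, $\wt Y$ is a disjoint union of two mutually orthogonal pieces --- the head subdiagram $Y_0$ sitting inside $\mf k$ and the corresponding tail interior --- since the unique ``connecting'' simple root ($\alpha_\times$ or $\beta_\times$) has been removed in each case. Consequently one has direct-sum decompositions
\[
\wt{\mf l} = \mf l_{\mf k} \oplus \wt{\mf l}_A,
\qquad
\mf l = \mf l_{\mf k} \oplus \mf l_A,
\qquad
\ov{\mf l} = \mf l_{\mf k} \oplus \ov{\mf l}_A,
\]
where $\mf l_{\mf k}$ is the common Levi in $\mf k$ carved out by $Y_0$, and $\wt{\mf l}_A$, $\mf l_A$, $\ov{\mf l}_A$ are the (centrally extended) type-$A$ Lie (super)algebras attached to the three respective tail interiors, with $\mf l_A, \ov{\mf l}_A \subset \wt{\mf l}_A$.

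The compatibility of this splitting with the central extension is the crux of the first step: I need to verify that the cocycle $\tau(A,B) = \text{Str}([\mf J, A] B)$ from Section~\ref{sec:ext} vanishes on pairs $(A,B)$ in which $A$ is head-supported and $B$ is tail-supported. This reduces to the elementary matrix identity $[\mf J, A] B = 0$ in such cases, which holds because $[\mf J, A]$ remains head-supported while $B$ is tail-supported, and a matrix product of operators with disjoint supports in $\wt{\I}_m$ is zero. Since $\mf l_{\mf k}$ is finite-dimensional, any residual self-cocycle on $\mf l_{\mf k}$ is a coboundary and is absorbed into a trivial reparametrization of $K$.

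Given the splitting, I would decompose $\la^\theta = \la_{\mf k} + \la^\theta_A$ along $\wt{\h}^* = \h_{\mf k}^* \oplus \wt{\h}_A^*$ and invoke the standard factorization of irreducibles over direct-sum Lie (super)algebras to obtain
\[
L(\wt{\mf l}, \la^\theta) \cong L(\mf l_{\mf k}, \la_{\mf k}) \otimes L(\wt{\mf l}_A, \la^\theta_A),
\]
together with the analogous factorizations $L(\mf l, \la) \cong L(\mf l_{\mf k}, \la_{\mf k}) \otimes L(\mf l_A, \la_A)$ and $L(\ov{\mf l}, \la^\natural) \cong L(\mf l_{\mf k}, \la_{\mf k}) \otimes L(\ov{\mf l}_A, \la^\natural_A)$. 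Since $\h_{\mf k}$ is common to all three Cartans, the projections $T$ and $\ov T$ act as the identity on the $\mf l_{\mf k}$-factor and reduce to their tail analogues on the type-$A$ factor; consequently
\[
T\bigl(L(\wt{\mf l}, \la^\theta)\bigr) = L(\mf l_{\mf k}, \la_{\mf k}) \otimes T\bigl(L(\wt{\mf l}_A, \la^\theta_A)\bigr),
\]
and likewise for $\ov T$.

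Finally, the tail identities $T\bigl(L(\wt{\mf l}_A, \la^\theta_A)\bigr) = L(\mf l_A, \la_A)$ and $\ov T\bigl(L(\wt{\mf l}_A, \la^\theta_A)\bigr) = L(\ov{\mf l}_A, \la^\natural_A)$ are precisely the content of the analogous lemma in the pure type-$A$ super setting established in~\cite{CL}, whose proof in turn relies on odd reflections in the style of Proposition~\ref{prop:change} together with a highest-weight-generation argument on the tail. Substituting these tail identities into the factorizations above yields the two claims of the lemma. The main obstacle in this plan is the first step, namely the clean splitting of the central extension across head and tail; once that is verified, Steps 2 and 3 are essentially formal.
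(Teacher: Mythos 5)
Your proposal is correct and its core idea is in fact the same as the paper's, but the execution is different enough to be worth comparing. The paper's proof is a purely character-level computation: it invokes the character identity from \cite{CK},
$$
\mathrm{ch}\,L(\wt{\mf l},\la^\theta)
  = \mathrm{ch}\,L(\wt{\mf l}\cap\mf k,\la|_{\mf k})\cdot
    hs_{\la_+'}(x_{1/2},x_1,x_{3/2},\ldots),
$$
then observes that applying $T$ corresponds to setting the odd variables $x_{1/2},x_{3/2},\ldots$ to zero in the hook Schur function, which collapses $hs_{\la_+'}$ to the ordinary Schur function $s_{\la_+}(x_1,x_2,\ldots)$ and hence yields $\mathrm{ch}\,L(\mf l,\la)$. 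That character factorization \emph{is} the shadow of your tensor-product decomposition $L(\wt{\mf l},\la^\theta)\cong L(\mf l_{\mf k},\la_{\mf k})\otimes L(\wt{\mf l}_A,\la^\theta_A)$, so your first two steps make explicit what the paper's citation of \cite{CK} packages implicitly. What you do differently is in the last step: instead of computing the specialization of the hook Schur function directly, you reduce the tail factor to the type-$A$ super-duality lemma of \cite{CL}. That is a clean reduction and it works; the trade-off is that your route requires the extra verification that the central extension splits across the head/tail boundary (which you correctly identify as the crux, and your argument that $[\mf J,A]B=0$ for head-supported $A$ and tail-supported $B$ is right, since the index sets are disjoint and the cross-term in the matrix product vanishes). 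The paper's character approach sidesteps this bookkeeping entirely because \eqref{eqn:hookschur} is already stated at the level of formal characters. One small inaccuracy worth flagging: you guess that the cited \cite{CL} tail lemma is proved via odd reflections and highest-weight generation — those techniques are what the present paper uses for \propref{prop:change} and \thmref{matching:modules} on the full $\DG$, whereas for the Levi piece the proof in \cite{CL} is, like here, a hook-Schur character computation. This mischaracterization does not affect the validity of your reduction.
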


\begin{proof}
We shall prove the first formula using a character argument,
and the second one can be proved similarly.

Associated to partitions $\nu \subset \la$, we denote by $s_\la(x_1,x_2,\ldots)$ and
$s_{\la/\nu}(x_1,x_2,\ldots)$ the Schur and skew Schur functions in the variables
$x_1,x_2,\ldots$. The hook Schur functions associated to $\la$ is defined to be
(cf.~\cite{S, BR})
\begin{equation}\label{hookschur:char}
hs_{\la}(x_{1/2},x_1,x_{3/2},x_2,\ldots)
:=\sum_{\mu\subset\la}s_{\mu}(x_{1/2},x_{3/2},\ldots)
s_{\la'/\mu'}(x_1,x_2,\ldots).
\end{equation}
For a dominant tuple $(\la_{-m},\ldots,\la_{-1};\la^+)$, we have (cf. \cite{CK})
\begin{equation} \label{eqn:hookschur}
{\rm ch}L(\wt{\mf{l}},\la^\theta)={\rm
ch}L(\wt{\mf{l}}\cap\mf{k},\la|_{\mf k}) \,
hs_{\la_+'}(x_{1/2},x_1,x_{3/2},x_2,\ldots).
\end{equation}
Here $x_r:=e^{\epsilon_r}$ for each $r$,
 and $L(\wt{\mf{l}}\cap\mf{k},\la|_{\mf k})$ denotes the irreducible
$\wt{\mf{l}}\cap\mf{k}$-module of highest weight $\la|_{\mf
k}=\sum_{i=-m}^{-1}\la_i\epsilon_i$. Note that
$\wt{\mf{l}}\cap\mf{k} ={\mf{l}}\cap\mf{k}$.

As an $\mf{l}$-module,
$L(\wt{\mf{l}},\la^\theta)$ is completely reducible.
On the character level,
applying $T$ to $L(\wt{\mf{l}},\la^\theta)$
corresponds to setting $x_{1/2},x_{3/2},x_{5/2},\ldots$
in the character formula \eqnref{eqn:hookschur} to zero.
By \eqnref{hookschur:char},
$T\big{(}L({\mf{l}},\la^\theta)\big{)}$ is an $\mf{l}$-module with
character ${\rm
ch}L(\wt{\mf{l}}\cap\mf{k},\la|_{\mf k}) \,
s_{\la_+}(x_1,x_2,\ldots)$, which is precisely the character of
$L({\mf{l}},\la)$. This proves the formula.
\end{proof}

\begin{cor}\label{functor}
$T$ and $\ov{T}$ define exact functors from $\wt{\mc{O}}$ to
$\mc{O}$ and from $\wt{\mc{O}}$ to $\ov{\mc{O}}$, respectively.
\end{cor}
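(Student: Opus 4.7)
\medskip

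\textbf{Proof plan.} The statement has two substantive components: that $T$ and $\ov T$ send objects of $\wt{\mc O}$ to objects of $\mc O$ and $\ov{\mc O}$ respectively (functoriality on morphisms is essentially tautological once this is established), and that they are exact. I will handle both components together for $T$, since the argument for $\ov T$ is parallel.

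First, for functoriality, note that the discussion preceding the corollary already verifies that $T(\wt M)$ is a $\G$-submodule of $\wt M$ and that $T[\wt f]$ is a $\G$-homomorphism whenever $\wt f$ is a $\DG$-homomorphism. Compatibility with composition and identities is immediate since $T$ acts on morphisms simply by restriction to the sum of those weight spaces indexed by $\h^*\subset\wt\h^*$.

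Next, I must check that $T(\wt M)\in\mc O$ for every $\wt M\in\wt{\mc O}$. The key tool here is Lemma~\ref{lmod2lmod}. By definition of $\wt{\mc O}$, as a $\wt{\mf l}$-module the object $\wt M$ decomposes into a direct sum of irreducibles of the form $L(\wt{\mf l},\nu^\theta)$ with $\nu\in P^+$. Applying $T$ commutes with direct sums, and Lemma~\ref{lmod2lmod} identifies each summand $T(L(\wt{\mf l},\nu^\theta))$ with $L(\mf l,\nu)$. This yields the required $\mf l$-semisimple decomposition of $T(\wt M)$ into modules $L(\mf l,\nu)$ with $\nu\in P^+$, establishing condition~(i) in the definition of $\mc O$. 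Finite dimensionality of the $\h$-weight spaces of $T(\wt M)$ follows because they coincide with the corresponding $\wt\h$-weight spaces of $\wt M$. For condition~(ii), fix finitely many $\wt\la_1,\dots,\wt\la_k\in\wt P^+$ bounding the weights of $\wt M$, and write $\wt\la_i=\la_i^\theta$ with $\la_i\in P^+$. The $\mf l$-highest weights $\nu\in P^+$ appearing in the decomposition of $T(\wt M)$ are exactly those for which $\nu^\theta$ occurs as a $\wt{\mf l}$-highest weight in $\wt M$, and hence each $\nu^\theta\in\wt\la_i-\Z_+\wt\Pi$ for some $i$; projecting this relation onto $\h^*$ (where all the tail roots $\alpha_\times,\alpha_r$ with $r\in\hf\N$ are absorbed into moves within an $\mf l$-orbit, while the head $\mf k$ and tail $\beta$-roots are common to $\wt\Pi$ and $\Pi$) shows that $\nu$ lies in $\la_i-\Z_+\Pi$, and since every weight of $L(\mf l,\nu)$ is of the form $\nu-\sum_{\alpha\in Y}\Z_+\alpha$ with $Y\subset\Pi$, every weight of $T(\wt M)$ lies in some $\la_i-\Z_+\Pi$ as required.

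Finally, exactness is immediate from the very definition of $T$. Given a short exact sequence $0\to\wt A\to\wt B\to\wt C\to 0$ in $\wt{\mc O}$, passing to a fixed $\wt\h$-weight space $\gamma$ preserves exactness because all three modules are semisimple $\wt\h$-modules; hence $0\to\wt A_\gamma\to\wt B_\gamma\to\wt C_\gamma\to 0$ is exact for each $\gamma$, and taking the direct sum over $\gamma\in\h^*$ preserves exactness termwise, yielding $0\to T(\wt A)\to T(\wt B)\to T(\wt C)\to 0$ exact in $\mc O$. The identical argument using $\ov\h^*$ in place of $\h^*$ handles $\ov T$.

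The only step that requires genuine input beyond bookkeeping is the translation of the weight-cone condition from $\wt\Pi$ to $\Pi$, and that is controlled by Lemma~\ref{lmod2lmod} together with the observation that the ``extra'' simple roots in $\wt\Pi\setminus\Pi$ are precisely the odd roots internal to $\wt{\mf l}$, so they do not enlarge the relevant $\h$-cone.
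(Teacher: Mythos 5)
Your overall strategy is the same as the paper's (implicit) one: use Lemma~\ref{lmod2lmod} together with the discussion preceding it to verify that $T(\wt M)$ and $\ov T(\wt M)$ satisfy the defining conditions of $\mc O$ and $\ov{\mc O}$, and read off exactness from the fact that $T$ and $\ov T$ are just projections onto sums of weight spaces. The functoriality, condition~(i), finite-dimensionality of weight spaces, and the exactness argument are all fine.

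However, the verification of condition~(ii) contains a genuine error. You claim that $\nu^\theta\in\la_i^\theta-\Z_+\wt\Pi$ implies $\nu\in\la_i-\Z_+\Pi$, obtained ``by projecting onto $\h^*$.'' This implication is false, and the parenthetical justification is also factually wrong: the roots $\beta_\times,\beta_1,\beta_2,\dots$ are \emph{not} elements of $\wt\Pi$ (one has $\beta_j=\alpha_j+\alpha_{j+1/2}$, a sum of two simple roots of $\DG$, not a simple root), so they are not ``common to $\wt\Pi$ and $\Pi$.'' Moreover, $\alpha_\times$ is not in $\wt Y$, so it is not ``absorbed into moves within an $\mf l$-orbit.'' For a concrete counterexample to the claimed implication (ignoring the head, i.e.\ $m=0$), take $\la^+=(1,1)$ and $\nu^+=(2)$: then $\theta(\la^+)=(2,0,0,\dots)$ and $\theta(\nu^+)=(1,1,0,\dots)$, so $\la^\theta-\nu^\theta=\epsilon_{1/2}-\epsilon_1=\alpha_{1/2}\in\Z_+\wt\Pi$, yet $\la-\nu=\epsilon_2-\epsilon_1=-\beta_1\notin\Z_+\Pi$. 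Thus $\nu^\theta\le\la^\theta$ in the $\wt\Pi$-order while $\nu\not\le\la$ in the $\Pi$-order. The map $\theta$ (and likewise $\natural$) is a bijection $P^+\to\wt P^+$ but is \emph{not} order-preserving between the partial orders defined by $\Pi$ and $\wt\Pi$, so you cannot keep the same bounding weights $\la_i$.

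Condition~(ii) of the definition of $\mc O$ only asks for the existence of \emph{some} finite bounding set, so the corollary survives — but a correct argument must replace the $\la_i$ by other dominant weights adapted to the $\Pi$-order (for instance by adjusting the tail partition and exploiting that in types $\mf b, \mf b^\bullet, \mf c$ the short simple root at the end of \makebox(23,0){$\oval(20,12)$}\makebox(-20,8){$\mf{k}^\xx$} makes roots like $-\epsilon_j$ or $-2\epsilon_j$ positive, which considerably enlarges the downward cone). As written, your proof asserts a false comparison between the $\wt\Pi$- and $\Pi$-orders, and so does not actually establish condition~(ii).
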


The following theorem can be regarded as a weak version of the
super duality which is to be established in
Theorem~\ref{thm:equivalence}.

\begin{thm}\label{matching:modules}
Let $\la\in P^+$. If $\wt{M}$ is a highest weight $\wt{\G}$-module
of highest weight $\la^\theta$, then $T(\wt{M})$ and
$\ov{T}(\wt{M})$ are highest weight ${\G}$- and $\ov{\G}$-modules
of highest weights $\la$ and $\la^\natural$, respectively.
Furthermore, we have
\begin{align*}
T\big{(}\wt{\Delta}(\la^\theta)\big{)}
 =\Delta(\la),\quad &T\big{(}\wt{L}(\la^\theta)\big{)}=L(\la);
    \\
\ov{T}\big{(}\wt{\Delta}(\la^\theta)\big{)}
 =\ov{\Delta}(\la^\natural),\quad
&\ov{T}\big{(}\wt{L}(\la^\theta)\big{)}=\ov{L}(\la^\natural).
\end{align*}
\end{thm}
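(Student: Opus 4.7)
The plan is to exploit the odd reflections of \propref{prop:change} to locate a highest weight vector of the correct $\G$- (or $\SG$-) weight inside $\wt M$, and then to match characters via the semisimple $\mf l$-module decompositions provided by \lemref{lem:paraverma}.

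I focus on the statements for $T$; those for $\ov T$ are parallel, substituting \propref{prop:change}(ii) and $\wt{\mf b}^s(n)$ for \propref{prop:change}(i) and $\wt{\mf b}^c(n)$. Fix $n\ge\ell(\la_+)$ and apply the sequence of odd reflections of \secref{oddreflection} to pass from $\wt{\mf b}$ to $\wt{\mf b}^c(n)$. By \propref{prop:change}(i) the modules $\wt{\Delta}(\la^\theta)$ and $\wt{L}(\la^\theta)$ become highest weight $\wt{\G}$-modules of highest weight $\la$ relative to $\wt{\mf b}^c(n)$, so the same then holds for any highest weight quotient $\wt M$ of $\wt{\Delta}(\la^\theta)$. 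Let $v\in\wt M$ be its resulting $\wt{\mf b}^c(n)$-highest weight vector; since $\la\in\h^*\subset\wt{\h}^*$, we have $v\in T(\wt M)$.

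Next I argue that $v$ is a $\G$-highest weight vector in $T(\wt M)$ relative to $\mf b$. The subdiagram to the left of the first $\bigotimes$ in the Dynkin diagram of $\wt{\mf b}^c(n)$ is precisely the Dynkin diagram of $\G_n$, so the positive root vectors of $\G_n$ (with respect to its standard Borel) embed as positive root vectors of $\wt{\G}$ relative to $\wt{\mf b}^c(n)$ and therefore annihilate $v$. Passing from $\wt{\mf b}^c(n)$ to $\wt{\mf b}^c(n')$ with $n'>n$ employs additional odd reflections through isotropic odd roots $\alpha$ whose coroots pair trivially with $\la$ (because $\la$ has vanishing coefficient on $\epsilon_j$ for $j>\ell(\la_+)$ and on $\epsilon_r$ for $r\in\hf+\Z_+$); by \lemref{hwt odd}(1) the vector $v$ is preserved under each such reflection. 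Consequently $v$ is annihilated by the positive root vectors of every $\G_{n'}$ with $n'\ge n$, and since $\G=\bigcup_{n'}\G_{n'}$ this shows $v$ is a $\G$-highest weight vector of weight $\la$. The analogous sequence of odd reflections using $\wt{\mf b}^s(n)$ yields a $\SG$-highest weight vector of weight $\la^\natural$ in $\ov T(\wt M)$, which establishes the first assertion of the theorem.

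For $\wt M=\wt{\Delta}(\la^\theta)$ we thus obtain a $\G$-homomorphism $\Delta(\la)\to T(\wt{\Delta}(\la^\theta))$ extending $v$, and it remains to show it is an isomorphism. By \lemref{lem:paraverma}(iii) one has an $\wt{\mf l}$-decomposition $\wt{\Delta}(\la^\theta)\cong\bigoplus_{\nu\in P^+}m_\nu\,L(\wt{\mf l},\nu^\theta)$, and the exactness of $T$ (\corref{functor}) together with \lemref{lmod2lmod} yields $T(\wt{\Delta}(\la^\theta))\cong\bigoplus_\nu m_\nu\,L(\mf l,\nu)$ as an $\mf l$-module. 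Writing the analogous decomposition $\Delta(\la)\cong\bigoplus_\nu m'_\nu\,L(\mf l,\nu)$ via \lemref{lem:paraverma}(i), it suffices to prove $m_\nu=m'_\nu$ for all $\nu\in P^+$. This reduces to two matching statements within the semisimple tensor categories whose simple objects are $\{L(\wt{\mf l},\nu^\theta)\}$ and $\{L(\mf l,\nu)\}$: the $\wt{\mf l}$-decomposition of $\wt{\mf u}_-$ matches the $\mf l$-decomposition of $\mf u_-$ under the bijection $\nu\leftrightarrow\nu^\theta$, and the tensor product structure constants coincide under the same bijection. Both are stability properties of the $\gl$-type combinatorics parallel to the hook Schur versus ordinary Schur identity employed in the proof of \lemref{lmod2lmod}. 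Once the characters match, the surjection $\Delta(\la)\twoheadrightarrow T(\wt{\Delta}(\la^\theta))$ is forced to be an isomorphism.

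Finally, for $\wt M=\wt{L}(\la^\theta)$ the exactness of $T$ yields a surjection $\Delta(\la)=T(\wt{\Delta}(\la^\theta))\twoheadrightarrow T(\wt{L}(\la^\theta))$; since $L(\la)$ is the unique irreducible quotient of $\Delta(\la)$, this induces a further surjection $T(\wt{L}(\la^\theta))\twoheadrightarrow L(\la)$. The principal remaining obstacle is to show that $T(\wt{L}(\la^\theta))$ is itself irreducible, equivalently that the $\mf l$-branching multiplicities $[\wt{L}(\la^\theta):L(\wt{\mf l},\nu^\theta)]_{\wt{\mf l}}$ coincide with $[L(\la):L(\mf l,\nu)]_{\mf l}$; this character matching, combined with the existing surjection onto $L(\la)$, then forces $T(\wt{L}(\la^\theta))=L(\la)$. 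The parallel argument with $\wt{\mf b}^s(n)$ delivers $\ov T(\wt{\Delta}(\la^\theta))=\ov{\Delta}(\la^\natural)$ and $\ov T(\wt{L}(\la^\theta))=\ov{L}(\la^\natural)$.
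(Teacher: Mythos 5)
There is a genuine gap at the first step that propagates through the rest of the argument. You show that after passing to $\wt{\mf b}^c(n)$ the vector $v$ is a $\mf b$-singular vector of weight $\la$ in $T(\wt M)$, and this is indeed the right starting point. But the first assertion of the theorem is that $T(\wt M)$ is a \emph{highest weight} $\G$-module, i.e.\ generated over $U(\G)$ by $v$; you establish only that $v$ is a highest weight \emph{vector}. The paper addresses precisely this point: after noting that $T(\wt M)$ is completely reducible over $\mf l$ with all constituents $L(\mf l,\nu)$, $\nu\in P^+$, it reduces the generation claim to showing that every vector of weight in $P^+$ lies in $U(\G)v_\la$, and cites the argument of \cite[Lemma 3.5]{CL} (which works because the tail diagram is of type $A$). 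Without this step, your final deduction for parabolic Vermas also fails: once you know $\mathrm{ch}\,T(\wt{\Delta}(\la^\theta))=\mathrm{ch}\,\Delta(\la)$, you get a $\G$-homomorphism $\Delta(\la)\to T(\wt{\Delta}(\la^\theta))$ extending $v$ via the universal property, but this map has image $U(\G)v$, and a character match alone does not force $U(\G)v$ to equal $T(\wt{\Delta}(\la^\theta))$; one needs the highest weight property of $T(\wt{\Delta}(\la^\theta))$ first, exactly as the paper does.

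Two further steps are left unproven. For the character match $\mathrm{ch}\,T(\wt{\Delta}(\la^\theta))=\mathrm{ch}\,\Delta(\la)$, you reduce it to a matching of $\wt{\mf l}$- versus $\mf l$-branching multiplicities, which in turn you reduce to two assertions about a matching of the opposite nilradical decompositions and of tensor product structure constants; these are stated as plausible ``stability properties of the $\gl$-type combinatorics'' but not proved. The paper instead observes that all relevant weights have nonnegative coefficients on the $\ep_r$ with $r>0$, so $T$ is multiplicative on these tensor products (in particular $T(U(\wt{\mf u}_-))=U(\mf u_-)$), and then the character match follows directly from \lemref{lmod2lmod}; that argument is both shorter and rigorous. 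Finally, for the irreducibility of $T(\wt L(\la^\theta))$ you explicitly acknowledge that the branching multiplicity matching is ``the principal remaining obstacle'' without resolving it. The paper's route is different and completes the proof: it argues by contradiction that a non-highest-weight singular vector in $T(\wt L(\la^\theta))$ would lift to a non-highest-weight singular vector in $\wt L(\la^\theta)$, contradicting irreducibility, again relying on the type-$A$ tail diagram as in \cite[Theorem 3.6]{CL}. So while your starting point via odd reflections is the same as the paper's, the argument as written has three unfilled steps, the first of which (highest weight module versus highest weight vector) is essential.
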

\begin{proof}
We will prove only the statements involving $T$, and the
statements involving $\ov{T}$ can be proved in the same way.

By \propref{prop:change}, $\wt{M}$ contains a $\wt{\mf
b}^c(n)$-highest weight vector $v_\la$ of highest weight $\la$ for
$n \gg 0$.  The vector $v_\la$ clearly lies in $T(\wt{M})$, and it
is a ${\mf b}$-singular vector since $\mf{b}=\G\cap\wt{\mf
b}^c(n)$. Now $T(\wt{M})$ is completely reducible over ${\mf l}$
with all highest weights of its irreducible summands lying in
$P^+$. Thus to show that $T(\wt{M})$ is a highest weight
$\G$-module it remains to show that any vector in $T(\wt{M})$ of
weight in $P^+$ is contained in ${U}(\G)v_\la$. This follows by
the same argument in \cite[Lemma 3.5]{CL}, which only relies on
the $A$-type tail diagram of $\DG$.

Let us write $\Delta(\la) =U(\mf u_-) \otimes_\C L({\mf{l}},\la)$ and
$\wt{\Delta}(\la^\theta) =U(\wt{\mf u}_-) \otimes_\C L(\wt{\mf{l}},\la^\theta)$. We
observe that all the weights in $U(\mf u_-)$, $L({\mf{l}},\la)$, $U(\wt{\mf u}_-)$, and
$L(\wt{\mf{l}},\la^\theta)$ are of the form $\sum_{j<0}a_j\ep_j+\sum_{r>0}b_r\ep_r$ with
$b_r\in\Z_+$. Since also $T(U(\wt{\mf u}_-)) =U(\mf u_-)$, it follows by
\lemref{lmod2lmod} that  $\text{ch}T\big{(}\wt{\Delta}(\la^\theta)\big{)}
=\text{ch}\Delta(\la)$. Since $T\big{(}\wt{\Delta}(\la^\theta)\big{)}$ is a highest
weight module of highest weight $\la$, we have $T\big{(}\wt{\Delta}(\la^\theta)\big{)}
=\Delta(\la)$.

To show that $T$ sends irreducibles to irreducibles we show that
$T(\wt{L}(\la^\theta))$  has no singular vector apart from the
scalar multiples of a highest weight vector.  We argue by assuming
otherwise and derive a contradiction.  If we have another singular
vector of weight different from $\la$, then we can show, following
the second part of the proof of \cite[Theorem 3.6]{CL}, that we
also have a singular vector in $\wt{L}(\la^\theta)$ of weight
different from $\la^\theta$. The argument there is applicable here,
since it again only depends on the tail diagram, which is
of type $A$.
\end{proof}

\begin{rem}
It can be shown that tilting modules exist in categories $\mc O, \ov{\mc O}, \wt{\mc O}$
(cf. \cite{CWZ, CW2} for type $A$) and that the functors $T$ and $\ov T$ respect the
tilting modules. We choose not to develop the details in order to keep the paper to a
reasonable length.
\end{rem}

By standard arguments \thmref{matching:modules} implies the
following character formula.

\begin{thm}\label{character}
Let $\la\in P^+$, and write ${\rm ch}L(\la)=\sum_{\mu\in
P^+}a_{\mu\la}{\rm ch}\Delta(\mu)$, $a_{\mu\la}\in\Z$.  Then
\begin{itemize}
\item[(i)] ${\rm ch}\ov{L}(\la^\natural)=\sum_{\mu\in
P^+}a_{\mu\la}{\rm ch}\ov{\Delta}(\mu^\natural)$,

\item[(ii)] ${\rm ch}\wt{L}(\la^\theta)=\sum_{\mu\in
P^+}a_{\mu\la}{\rm ch}\wt{\Delta}(\mu^\theta)$.
\end{itemize}
\end{thm}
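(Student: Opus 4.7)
The plan is to deduce the character formulas in $\ov{\mc O}$ and $\wt{\mc O}$ from the expansion in $\mc O$ by transporting coefficients through the exact functors $T$ and $\ov{T}$ provided by \corref{functor}, using \thmref{matching:modules} to identify how they act on irreducibles and parabolic Verma modules.

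First I would establish a formal character expansion inside $\wt{\mc O}$: I claim that one can write
\begin{equation*}
\mathrm{ch}\,\wt{L}(\la^\theta)=\sum_{\mu\in P^+}b_{\mu\la}\,\mathrm{ch}\,\wt{\Delta}(\mu^\theta)
\end{equation*}
for uniquely determined integers $b_{\mu\la}$, with $b_{\la\la}=1$ and $b_{\mu\la}\neq 0$ only if $\mu^\theta\leq\la^\theta$ in the usual partial order. The argument is formal: each $\mathrm{ch}\,\wt{\Delta}(\mu^\theta)$ is upper-triangular with leading term $e^{\mu^\theta}$ (and all lower terms lie in $\mu^\theta-\sum_{\alpha\in\wt{\Pi}}\Z_{+}\alpha$), so within the suitable completion of the $\Z$-span of the $e^\gamma$ the $\mathrm{ch}\,\wt{\Delta}(\mu^\theta)$ form a topological $\Z$-basis. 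Since $\wt{L}(\la^\theta)$ has finite-dimensional weight spaces and weights bounded above by $\la^\theta$, the expansion above exists and has only finitely many nonzero terms below any given weight.

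Second I would apply $T$ to this identity. By \corref{functor}, $T$ is exact, and on characters it acts as the $\Z$-linear projection that retains only monomials $e^\gamma$ with $\gamma\in\h^*$. Combined with \thmref{matching:modules}, which gives $T(\wt{L}(\la^\theta))=L(\la)$ and $T(\wt{\Delta}(\mu^\theta))=\Delta(\mu)$, I obtain
\begin{equation*}
\mathrm{ch}\,L(\la)=\sum_{\mu\in P^+}b_{\mu\la}\,\mathrm{ch}\,\Delta(\mu).
\end{equation*}
The same triangularity argument in $\mc O$ shows that such an expansion is unique, so comparing with the given expansion $\mathrm{ch}\,L(\la)=\sum_\mu a_{\mu\la}\,\mathrm{ch}\,\Delta(\mu)$ forces $b_{\mu\la}=a_{\mu\la}$. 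Applying $\ov{T}$ to the same formal identity and invoking the remaining parts of \thmref{matching:modules}, namely $\ov{T}(\wt{L}(\la^\theta))=\ov{L}(\la^\natural)$ and $\ov{T}(\wt{\Delta}(\mu^\theta))=\ov{\Delta}(\mu^\natural)$, yields statement (i); the identity for $\wt{L}(\la^\theta)$ itself is what was shown en route and gives (ii).

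The main technical hurdle is justifying the formal expansion in $\wt{\mc O}$ rigorously, since parabolic Verma modules there need not possess finite composition series. The point is that one does not need actual composition series: it suffices to invert the upper-triangular system of parabolic Verma characters formally, and the finiteness statement that only finitely many $b_{\mu\la}$ are nonzero at or above any fixed weight follows from \lemref{lem:paraverma} together with the fact that weights of $\wt{L}(\la^\theta)$ lie in $\la^\theta-\sum_{\alpha\in\wt{\Pi}}\Z_{+}\alpha$ and have finite-dimensional weight spaces. Once this formal framework is in place, everything else is a direct application of the exactness of $T,\ov{T}$ and the matching of irreducibles with parabolic Vermas supplied by \thmref{matching:modules}.
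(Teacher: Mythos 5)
Your proof is correct and matches the approach the paper intends: the paper itself only says ``by standard arguments \thmref{matching:modules} implies the following character formula,'' and the standard argument is exactly what you wrote — express $\mathrm{ch}\,\wt{L}(\la^\theta)$ in the topological basis $\{\mathrm{ch}\,\wt{\Delta}(\mu^\theta)\}$ using upper-triangularity and finite-dimensional weight spaces, then transport via $T$ and $\ov{T}$ using \thmref{matching:modules} and uniqueness of such expansions.
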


\begin{rem} \label{rem:sol}
The transition matrix $(a_{\mu\la})$ in \thmref{character} is known according to the
Kazhdan-Lusztig theory. This is because the Kazhdan-Lusztig polynomials in the BGG
category ${O}$ also determine the composition factors of generalized Verma modules in the
corresponding parabolic subcategory (see e.g.~\cite[p.~445 and Proposition 7.5]{So}). Hence
\thmref{character} and \lemref{lem:trunc} provide a complete solution to the irreducible
character problem in the category $\ov{\mc O}_n$ for the ortho-symplectic Lie
superalgebras.
\end{rem}

\subsection{Kostant type homology formula}\label{sec:homology}

For a precise definition of homology groups of Lie superalgebras
with coefficients in a module and a precise formula for the
boundary operator we refer the reader to \cite[Section 4]{CL} or
\cite{T}.

For $\wt{M}\in\wt{\mc{O}}$ we denote by $M=T(\wt{M})\in{\mc{O}}$
and $\ov{M}=\ov{T}(\wt{M})\in\ov{\mc{O}}$. Furthermore let
$\wt{d}:\Lambda(\wt{\mf{u}}_-)\otimes
{\wt{M}}\rightarrow\Lambda(\wt{\mf{u}}_-)\otimes {\wt{M}}$ be the
boundary operator of the complex of $\wt{\mf{u}}_-$-homology
groups with coefficients in $\wt{M}$,
regarded as a $\wt{\mf{u}}_-$-module. The map $\wt{d}$ is an
$\wt{\mf{l}}$-module homomorphism and hence the homology groups
$H_n(\wt{\mf{u}}_-,\wt{M})$ are $\wt{\mf{l}}$-modules, for
$n\in\Z_+$.
Accordingly we let $d:\Lambda({\mf{u}}_-)\otimes {M}\rightarrow
\Lambda({\mf{u}}_-)\otimes {M}$ and
$\ov{d}:\Lambda({\ov{\mf{u}}}_-)\otimes \ov{M}\rightarrow
\Lambda({\ov{\mf{u}}}_-)\otimes \ov{M}$ stand for the boundary
operator of the complex of $\mf{u}_-$-homology with coefficients
in $M$ and the boundary operator of the complex of
${\ov{\mf{u}}}_-$-homology with coefficients in $\ov{M}$,
respectively. Similarly, ${d}$ and $\ov{d}$ are ${\mf{l}}$- and
${\ov{\mf{l}}}$-homomorphisms, respectively.

\begin{lem}\label{boundary}
For $\wt{M}\in{\wt{\mc{O}}}$ and $\la\in P^+$, we have
\begin{itemize}\label{lem:aux3}
\item[(i)] $T\big{(}\La(\wt{\mf{u}}_-)
\otimes\wt{M}\big{)}=\La({\mf{u}}_-)\otimes{M}$,
 and thus
 $T\big{(}\La(\wt{\mf{u}}_-)\otimes\wt{L}(\la^\theta)\big{)}
 = \La({\mf{u}}_-)\otimes L(\la).$
 Moreover, $T[\wt{d}]=d$.

\item[(ii)] $\ov{T}\big{(}\La(\wt{\mf{u}}_-)\otimes\wt{M}\big{)}
=\La({\ov{\mf{u}}}_-)\otimes{\ov{M}}$,
 and thus
 $\ov{T}\big{(}\La(\wt{\mf{u}}_-)\otimes
\wt{L}(\la^\theta)\big{)} =
\La({\ov{\mf{u}}}_-)\otimes\ov{L}(\la^\natural).$ Moreover,
$\ov{T}[\wt{d}]=\ov{d}$.
\end{itemize}
\end{lem}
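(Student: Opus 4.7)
The plan is to handle both parts in parallel by carefully bookkeeping weight supports, so that the whole lemma reduces to the observation that on the relevant modules the ``unwanted'' weight components (those indexed by $\hf+\Z_+$ for $T$, and those indexed by $\N$ for $\ov T$) are uniformly non-negative, and can therefore cancel in a sum only by vanishing separately.

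First I would check that $T(\wt{\mf u}_-)=\mf u_-$ and $\ov T(\wt{\mf u}_-)=\ov{\mf u}_-$. A direct inspection of the positive root systems $\wt\Phi^\xx_+$ listed in Sections~\ref{sec:skewsym} and~\ref{sec:symm}, together with the fact that $\wt{\mf l}$ corresponds to removing only the leftmost simple root of $\wt{\mf T}$, shows that every root of $\wt{\mf u}_-$ has non-negative coefficient on each $\epsilon_r$ with $r\in\hf+\Z_+$ and also non-negative coefficient on each $\epsilon_j$ with $j\in\N$. Hence the roots of $\wt{\mf u}_-$ that lie in $\h^*$ (respectively $\ov\h^*$) are exactly the roots of $\mf u_-$ (respectively $\ov{\mf u}_-$), and the equality $T(\wt{\mf u}_-)=\mf u_-$ (respectively $\ov T(\wt{\mf u}_-)=\ov{\mf u}_-$) follows on each weight subspace. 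Next I would verify that every $\wt M\in\wt{\mc O}$ satisfies the same non-negativity. By \lemref{lem:paraverma} it suffices to look at the irreducible $\wt{\mf l}$-constituents $L(\wt{\mf l},\mu^\theta)$, $\mu\in P^+$. The head part of $\wt{\mf l}$ acts only on the coordinates $\epsilon_i$ with $i<0$, while the tail part is a type $A_\infty$ Levi on the indices $r\in\hf\N$; the highest weight $\mu^\theta$ has tail coordinates $\theta(\mu^+)_r\ge0$ forming the dominant highest weight of a polynomial representation, so standard type $A$ representation theory (via semistandard tableaux) forces all weights of $L(\wt{\mf l},\mu^\theta)$ to have tail coordinates $\ge0$.

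Part (i) then follows from the identity
\begin{equation*}
T\bigl(\La(\wt{\mf u}_-)\otimes \wt M\bigr)
 =\La\bigl(T(\wt{\mf u}_-)\bigr)\otimes T(\wt M)
 =\La(\mf u_-)\otimes M,
\end{equation*}
and its $\ov T$-analogue. The inclusion $\supseteq$ is immediate. For $\subseteq$, a weight vector of weight $\gamma\in\h^*$ on the left lies in the direct sum
$\bigoplus_{\mu_1+\mu_2=\gamma}\La(\wt{\mf u}_-)_{\mu_1}\otimes\wt M_{\mu_2}$;
the non-negativity of the $\hf+\Z_+$ components on each $\mu_1$ and $\mu_2$ forces these components to vanish separately, so $\mu_1,\mu_2\in\h^*$, i.e.\ $\La(\wt{\mf u}_-)_{\mu_1}\subset\La(\mf u_-)$ and $\wt M_{\mu_2}\subset M$. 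Specialising $\wt M=\wt L(\la^\theta)$ and invoking \thmref{matching:modules} to get $T(\wt L(\la^\theta))=L(\la)$ and $\ov T(\wt L(\la^\theta))=\ov L(\la^\natural)$ yields the asserted ``thus'' clauses.

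Finally, $T[\wt d]=d$ and $\ov T[\wt d]=\ov d$ are essentially automatic once the underlying spaces are identified. Since $\wt d$ is an $\wt{\mf l}$-map it preserves $\wt{\h}$-weights, and so restricts to $T\bigl(\La(\wt{\mf u}_-)\otimes\wt M\bigr)=\La(\mf u_-)\otimes M$. On a pure tensor $x_1\wedge\cdots\wedge x_p\otimes m$ with $x_i\in\mf u_-$ and $m\in M$, each term $x_1\wedge\cdots\widehat{x_i}\cdots\wedge x_p\otimes x_i\cdot m$ of the Chevalley--Eilenberg formula for $\wt d$ still lives in $\La(\mf u_-)\otimes M$ because $\mf u_-\subset\G$ and $M$ is a $\G$-submodule of $\wt M$, and each term $[x_i,x_j]\wedge\cdots\otimes m$ does so because $\mf u_-$ is a subalgebra of $\wt{\mf u}_-$; the restricted formula is visibly the formula for $d$, and the argument for $\ov d$ is identical. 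The main obstacle in the whole lemma is therefore the non-negativity/weight bookkeeping needed to identify the spaces in (i) and (ii); once it is settled, the compatibility of the differentials is a free consequence.
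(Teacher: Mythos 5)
Your proof is correct and follows essentially the same route as the paper's: the crucial point in both cases is that all weight components on the indices $\hf+\Z_+$ (respectively $\N$) occurring in $\La(\wt{\mf u}_-)$ and in $\wt M$ are non-negative, so that a weight in $\h^*$ (respectively $\ov\h^*$) can only arise from a pair of weights each already lying in $\h^*$ (respectively $\ov\h^*$); the paper phrases this as an equality of characters followed by complete $\mf l$-reducibility, whereas you argue directly on weight spaces, which amounts to the same thing. Both proofs then invoke Theorem~\ref{matching:modules} for the ``thus'' clauses and observe that the Chevalley--Eilenberg differential restricts compatibly.
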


\begin{proof}
We will prove (i) only. It follows by definition of $T$ and $\wt{\mf{u}}_-$ that
$T\big{(}\Lambda(\wt{\mf{u}}_-)\big{)}=\Lambda({\mf{u}}_-)$. Now, since all modules
involved have weights of the form $\sum_{i<0}a_i\ep_i+\sum_{r>0}b_r\ep_r$ with
$b_r\in\Z_+$, it follows that $T\big{(}\La(\wt{\mf{u}}_-)\otimes\wt{M}\big{)}$ and
$\La({\mf{u}}_-)\otimes{M}$ have the same character. Complete reducibility of the
$\mf{l}$-modules $T\big{(}\La(\wt{\mf{u}}_-)\otimes\wt{M}\big{)}$ and
$\La({\mf{u}}_-)\otimes{M}$ implies that
$T\big{(}\La(\wt{\mf{u}}_-)\otimes\wt{M}\big{)}=\La({\mf{u}}_-)\otimes{M}$ as
$\mf{l}$-modules. \thmref{matching:modules} completes the proof of the first part of (i).

The second part of (i) follows from the definitions of $\wt{d}$ and $d$ (see
e.g.~\cite[(4.1)]{CL}).
\end{proof}

By \lemref{boundary} we have the following commutative diagram.
\begin{eqnarray}\label{compare-complexes}
\CD
\cdots @>\wt{d}>> \La^{n+1}(\wt{\mf{u}}_-)
 \otimes\wt{M} @>\wt{d}>>\La^{n}(\wt{\mf{u}}_-)
 \otimes\wt{M} @>\wt{d}>>\La^{n-1}(\wt{\mf{u}}_-)\otimes\wt{M}\cdots \\
@.  @VVT_{\La^{n+1}(\wt{\mf{u}}_-)
 \otimes\wt{M}}V @VVT_{\La^{n}(\wt{\mf{u}}_-)
 \otimes\wt{M}}V @VVT_{\La^{n-1}(\wt{\mf{u}}_-)\otimes\wt{M}}V\\
\cdots @>d>> \La^{n+1}({\mf{u}}_-)
 \otimes {M} @>{d}>>\La^{n}({\mf{u}}_-)
 \otimes{M} @>{d}>>\La^{n-1}({\mf{u}}_-)\otimes{M}\cdots \\
 \endCD
\end{eqnarray}
Thus $T$ induces an $\mf{l}$-homomorphism from
$H_n(\wt{\mf{u}}_-;\wt{M})$ to $H_n({\mf{u}}_-;{M})$. Similarly,
$\ov{T}$ induces an ${\ov{\mf{l}}}$-homomorphism from
$H_n(\wt{\mf{u}}_-;\wt{M})$ to $H_n({\ov{\mf{u}}}_-;\ov{M})$.

As an ${\wt{\mf{l}}}$-module, $\La(\wt{\mf{u}}_-)$ is a direct sum
of $L({\wt{\mf{l}}},\mu^\theta)$, $\mu\in P^+$, each appearing
with finite multiplicity (\cite[Section 3.2.3]{CK}). By
\cite[Theorem~3.2]{CK}, $\Lambda(\wt{\mf{u}}_-)\otimes\wt{M}$ as
an ${\wt{\mf{l}}}$-module is completely reducible. Write
$\Lambda(\wt{\mf{u}}_-)\otimes\wt{M}\cong\bigoplus_{\mu\in
P^+}L({\wt{\mf{l}}},\mu^\theta)^{m(\mu)}$ as
${\wt{\mf{l}}}$-modules.  It follows by Lemmas \ref{lmod2lmod} and
\ref{lem:aux3} that $\Lambda({\mf{u}}_-)\otimes{M}\cong
\bigoplus_{\mu\in P^+}L({\mf{l}},\mu)^{m(\mu)}$, as
${\mf{l}}$-modules. Similarly,
$\Lambda({\ov{\mf{u}}}_-)\otimes\ov{M}\cong\bigoplus_{\mu\in
P^+}L({\ov{\mf{l}}},\mu^\natural)^{m(\mu)}$, as
${\ov{\mf{l}}}$-modules. The commutativity of
\eqnref{compare-complexes} and \lemref{boundary} now allow us to
adapt the proof of \cite[Theorem ~4.4]{CL} to prove the following.

\begin{thm}\label{matching:KL} We have for $n\ge 0$
\begin{itemize}
\item[(i)] $T(H_n(\wt{\mf{u}}_-;\wt{M}))\cong
H_n({\mf{u}}_-;{M})$, as $\mf{l}$-modules.

\item[(ii)] $\ov{T}(H_n(\wt{\mf{u}}_-;\wt{M}))\cong
H_n({\ov{\mf{u}}}_-;\ov{M})$, as ${\ov{\mf{l}}}$-modules.
\end{itemize}
\end{thm}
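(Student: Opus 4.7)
The plan is to mirror the strategy of \cite[Theorem 4.4]{CL}, using the commutative diagram \eqref{compare-complexes} together with the isotypic decompositions of the chain complexes over the various Levi subalgebras that are already set up in the paragraph preceding the statement. I will only address (i), since (ii) follows by the same reasoning with $\theta$ replaced by $\natural$ and $\mf{l}$ replaced by $\ov{\mf{l}}$.

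First I would fix a homological degree $n$ and record the $\wt{\mf{l}}$-isotypic decomposition
\begin{align*}
\Lambda^n(\wt{\mf{u}}_-)\otimes\wt{M}\;\cong\;\bigoplus_{\mu\in P^+} L(\wt{\mf{l}},\mu^\theta)\otimes W_\mu^n,
\end{align*}
where the $W_\mu^n$ are finite-dimensional multiplicity spaces. Applying $T$ and invoking \lemref{lmod2lmod} together with \lemref{boundary}(i) yields
\begin{align*}
\Lambda^n(\mf{u}_-)\otimes M\;\cong\;\bigoplus_{\mu\in P^+}L(\mf{l},\mu)\otimes W_\mu^n
\end{align*}
with the \emph{same} multiplicity spaces. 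The canonical identification is obtained by observing that any $\wt{\mf{l}}$-highest weight vector of weight $\mu^\theta$ in $\Lambda^n(\wt{\mf{u}}_-)\otimes\wt{M}$ lies in $T\big(\Lambda^n(\wt{\mf{u}}_-)\otimes\wt{M}\big)=\Lambda^n(\mf{u}_-)\otimes M$ (all relevant weights being of the form $\sum_{j<0}a_j\epsilon_j+\sum_{r>0}b_r\epsilon_r$ with $b_r\in\Z_+$), and is automatically an $\mf{l}$-highest weight vector of weight $\mu$; character matching forces this injection of multiplicity spaces to be an isomorphism.

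Next, since $\wt{d}$ is an $\wt{\mf{l}}$-homomorphism, Schur's lemma guarantees that it acts on the $\mu^\theta$-isotypic component as $\mathrm{id}\otimes d_\mu^n$ for some linear map $d_\mu^n:W_\mu^n\to W_\mu^{n-1}$. By \lemref{boundary}(i) we have $T[\wt{d}]=d$, so $d$ acts on the $L(\mf{l},\mu)$-isotypic component of $\Lambda^n(\mf{u}_-)\otimes M$ via the identical linear maps $d_\mu^n$. Passing to homology gives
\begin{align*}
H_n(\wt{\mf{u}}_-;\wt{M})&\cong\bigoplus_{\mu\in P^+}L(\wt{\mf{l}},\mu^\theta)\otimes\big(\ker d_\mu^n/\mathrm{im}\,d_\mu^{n+1}\big),\\
H_n(\mf{u}_-;M)&\cong\bigoplus_{\mu\in P^+}L(\mf{l},\mu)\otimes\big(\ker d_\mu^n/\mathrm{im}\,d_\mu^{n+1}\big),
\end{align*}
and the induced map $T[H_n]$ is precisely the identity on the shared multiplicity data. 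This produces the desired $\mf{l}$-module isomorphism.

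The main obstacle, as usual with this kind of argument, is the bookkeeping that ensures $T$ is genuinely compatible with the isotypic decomposition, i.e.~that the Schur-type identifications on the two sides of \eqref{compare-complexes} can be chosen to make everything strictly commute rather than merely commute up to a noncanonical automorphism of each multiplicity space. This is handled by the highest-weight observation above: $T$ restricts to a \emph{bijection} between the $\mu^\theta$-highest weight vectors of $\Lambda^n(\wt{\mf{u}}_-)\otimes\wt{M}$ and the $\mu$-highest weight vectors of $\Lambda^n(\mf{u}_-)\otimes M$, which pins down the multiplicity spaces canonically and makes the linear maps $d_\mu^n$ on both sides literally equal. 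The $\ov{T}$-case proceeds identically using \lemref{boundary}(ii).
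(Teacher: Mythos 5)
Your overall strategy is exactly the paper's: decompose the chain complex into $\wt{\mf{l}}$-isotypic pieces, use Schur's lemma to see that $\wt{d}$ acts only on the multiplicity spaces, and use the commutative diagram \eqref{compare-complexes} together with $T[\wt{d}]=d$ to transport everything to the $\mf{l}$-side. That is precisely how the authors adapt \cite[Theorem 4.4]{CL}, so the architecture is right.

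However, the "bookkeeping" paragraph contains a concrete error. You assert that any $\wt{\mf{l}}$-highest weight vector of weight $\mu^\theta$ in $\Lambda^n(\wt{\mf{u}}_-)\otimes\wt{M}$ lies in $T\big(\Lambda^n(\wt{\mf{u}}_-)\otimes\wt{M}\big)$ and "is automatically an $\mf{l}$-highest weight vector of weight $\mu$." This is false, and for a structural reason: by \eqnref{weight:wtIm}, $\mu^\theta$ has nonzero coefficients $\theta(\mu^+)_r$ on $\epsilon_r$ for half-integral $r$, so $\mu^\theta\notin\h^*$; since $T$ is by definition the projection onto the $\h^*$-weight spaces, the standard-Borel highest weight vector of weight $\mu^\theta$ is \emph{annihilated} by $T$, not preserved. (Also, a nonzero vector cannot simultaneously carry the two distinct weights $\mu^\theta$ and $\mu$.) The parenthetical about weights of the form $\sum_{j<0}a_j\epsilon_j+\sum_{r>0}b_r\epsilon_r$ does not rescue this, since that class includes the half-integral $\epsilon_r$'s.

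There are two clean ways to repair the identification, both already available in the paper. One is via \propref{prop:change}: after the odd reflections, the Borel $\wt{\mf{b}}^{c}_{\wt{\mf{l}}}(n)$ (for $n\gg 0$) makes each summand $L(\wt{\mf{l}},\mu^\theta)$ a highest weight module of highest weight $\mu\in\h^*$, and that highest weight vector \emph{does} lie in the image of $T$; one should phrase the "bijection of highest weight vectors" with respect to this Borel, not the standard one. The other, more structural, route avoids highest weight vectors entirely: since $T$ is exact and sends each irreducible copy $L(\wt{\mf{l}},\mu^\theta)$ to $L(\mf{l},\mu)$ by \lemref{lmod2lmod}, the restriction of $T$ to the $\mu^\theta$-isotypic component $L(\wt{\mf{l}},\mu^\theta)\otimes W_\mu^n$ factors as $T_{L(\wt{\mf{l}},\mu^\theta)}\otimes\mathrm{id}_{W_\mu^n}$, which is exactly the canonical identification of multiplicity spaces you need for the rest of the argument (the Schur's-lemma step and the conclusion are then fine as you wrote them).
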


Setting $\wt{M}=\wt{L}(\la^\theta)$ in \thmref{matching:KL} and using
\thmref{matching:modules} we obtain the following.

\begin{cor}\label{matching:KL1}
For $\la\in P^+$ and $n\ge 0$, we have
\begin{itemize}
\item[(i)]
$T\big{(}H_n(\wt{\mf{u}}_-;\wt{L}(\la^\theta))\big{)}\cong
H_n({\mf{u}}_-;L(\la))$, as $\mf{l}$-modules.

\item[(ii)]
$\ov{T}\big{(}H_n(\wt{\mf{u}}_-;\wt{L}(\la^\theta))\big{)}\cong
H_n({\ov{\mf{u}}}_-;\ov{L}(\la^\natural))$, as
${\ov{\mf{l}}}$-modules.
\end{itemize}
\end{cor}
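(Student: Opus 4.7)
The plan is to obtain this corollary as a direct specialization of \thmref{matching:KL}. Concretely, I would take the object $\wt{M}\in\wt{\mc O}$ appearing in \thmref{matching:KL} to be the irreducible $\wt{L}(\la^\theta)$, and then identify the ``downstairs'' modules $M=T(\wt{M})$ and $\ov{M}=\ov{T}(\wt{M})$ using \thmref{matching:modules}.

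First, I would invoke \thmref{matching:modules} to conclude
\[
T\bigl(\wt{L}(\la^\theta)\bigr)=L(\la),\qquad
\ov{T}\bigl(\wt{L}(\la^\theta)\bigr)=\ov{L}(\la^\natural).
\]
These are precisely the modules that should appear in the right-hand sides of (i) and (ii). Second, I would apply \thmref{matching:KL}(i) with $\wt{M}=\wt{L}(\la^\theta)$, which gives an isomorphism of $\mf{l}$-modules
\[
T\bigl(H_n(\wt{\mf u}_-;\wt{L}(\la^\theta))\bigr)\cong H_n(\mf{u}_-;T(\wt{L}(\la^\theta)))=H_n(\mf{u}_-;L(\la)),
\]
establishing (i). Part (ii) follows symmetrically from \thmref{matching:KL}(ii), using $\ov{T}(\wt{L}(\la^\theta))=\ov{L}(\la^\natural)$.

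There is essentially no obstacle: the corollary is a direct substitution, with the only subtlety being that the identifications of $T(\wt L(\la^\theta))$ and $\ov T(\wt L(\la^\theta))$ with $L(\la)$ and $\ov L(\la^\natural)$ are not formal but genuinely require the work done in \thmref{matching:modules} (namely the odd reflection analysis of \propref{prop:change} to guarantee that $\wt L(\la^\theta)$ is a highest weight $\DG$-module of highest weight $\la$ with respect to $\wt{\mf b}^c(n)$, and of highest weight $\la^\natural$ with respect to $\wt{\mf b}^s(n)$, for $n\gg 0$). Once those identifications are in hand, nothing further is needed.
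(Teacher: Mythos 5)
Your proof is correct and is exactly the paper's own derivation: the authors state that Corollary~\ref{matching:KL1} follows by setting $\wt{M}=\wt{L}(\la^\theta)$ in \thmref{matching:KL} and using \thmref{matching:modules} to identify $T(\wt{L}(\la^\theta))=L(\la)$ and $\ov{T}(\wt{L}(\la^\theta))=\ov{L}(\la^\natural)$. Nothing more is needed.
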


We define parabolic Kazhdan-Lusztig polynomials in the categories
$\mc O$, $\ov{\mc O}$ and $\wt{\mc O}$ for $\mu,\la\in P^+$ by
letting
\begin{align*}
{\ell}_{\mu\la}(q) :=\sum_{n=0}^\infty\text{dim}_\C
\Big{(}\text{Hom}_{{\mf{l}}}\big{[} L({\mf{l}},\mu),
H_n\big{(}{{\mf{u}}}_-;{L}(\la)\big{)} \big{]}\Big{)} (-q)^{-n},\\
\ov{\ell}_{\mu^\natural\la^\natural}(q)
 :=\sum_{n=0}^\infty\text{dim}_\C\Big{(}\text{Hom}_{\ov{\mf{l}}}\big{[}
 L(\ov{\mf{l}},\mu^\natural),
 H_n\big{(}{\ov{\mf{u}}}_-;{L}(\la^\natural)\big{)} \big{]}\Big{)} (-q)^{-n},\\
\wt{\ell}_{\mu^\theta\la^\theta}(q)
 :=\sum_{n=0}^\infty\text{dim}_\C\Big{(}\text{Hom}_{\wt{\mf{l}}}\big{[}
 L(\wt{\mf{l}},\mu^\theta),
 H_n\big{(}{\wt{\mf{u}}}_-;{L}(\la^\theta)\big{)} \big{]}\Big{)} (-q)^{-n}.
\end{align*}

By Vogan's homological interpretation of the
Kazhdan-Lusztig polynomials \cite[Conjecture 3.4]{V} and the
Kazhdan-Lusztig conjectures \cite{KL}, proved in \cite{BB, BK},
$\ell_{\mu\la}(q)$ coincides with the original definition
and moreover $\ell_{\mu\la} (1) =a_{\mu\la}$ (cf. \thmref{character}).
The following reformulation of \corref{matching:KL1}
 is a generalization of \thmref{character}.

\begin{thm}   \label{matching:KLpol}
For $\la,\mu\in P^+$ we have $ \ell_{\mu\la}(q)
=\wt{\ell}_{\mu^\theta\la^\theta}(q) =
\ov{\ell}_{\mu^\natural\la^\natural}(q). $
\end{thm}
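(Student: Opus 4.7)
The plan is to obtain \thmref{matching:KLpol} as a direct reformulation of \corref{matching:KL1} combined with \lemref{lmod2lmod}. Since the three polynomials are defined as generating series of homomorphism dimensions, I only need to convert the isomorphisms of $\mf{l}$-, $\ov{\mf{l}}$-, and $\wt{\mf{l}}$-modules from \corref{matching:KL1} into a matching of multiplicities of the relevant irreducible Levi modules. I will argue that $T$ and $\ov T$ induce bijections between the irreducible constituents of $H_n(\wt{\mf u}_-;\wt L(\la^\theta))$ and those of $H_n(\mf u_-;L(\la))$ and $H_n(\ov{\mf u}_-;\ov L(\la^\natural))$.

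First I would record that $\La(\wt{\mf{u}}_-)\otimes\wt{L}(\la^\theta)$ is completely reducible as an $\wt{\mf{l}}$-module with all composition factors of the form $L(\wt{\mf{l}},\nu^\theta)$ for $\nu\in P^+$ (this is the semisimple tensor category property of \cite[Theorem 3.2]{CK} used already in the proof of \thmref{matching:KL}). Since $H_n(\wt{\mf u}_-;\wt L(\la^\theta))$ is a subquotient of this $\wt{\mf{l}}$-module, it inherits the same complete reducibility, so one can write
\begin{equation*}
H_n\big(\wt{\mf{u}}_-;\wt{L}(\la^\theta)\big)\cong\bigoplus_{\mu\in P^+} L(\wt{\mf{l}},\mu^\theta)^{\oplus m_n(\mu)}
\end{equation*}
as $\wt{\mf{l}}$-modules, where $m_n(\mu):=\dim_\C\text{Hom}_{\wt{\mf{l}}}[L(\wt{\mf{l}},\mu^\theta),H_n(\wt{\mf{u}}_-;\wt{L}(\la^\theta))]$ is finite.

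Next I would apply $T$ and $\ov T$ termwise. Since $T$ is the functor of selecting the direct sum of weight spaces with weights in ${\h}^*$, it commutes with arbitrary direct sums; by \lemref{lmod2lmod}, $T(L(\wt{\mf{l}},\mu^\theta))=L(\mf{l},\mu)$ and $\ov{T}(L(\wt{\mf{l}},\mu^\theta))=L(\ov{\mf{l}},\mu^\natural)$. Combined with \corref{matching:KL1}, which identifies $T(H_n(\wt{\mf u}_-;\wt L(\la^\theta)))\cong H_n(\mf u_-;L(\la))$ and $\ov T(H_n(\wt{\mf u}_-;\wt L(\la^\theta)))\cong H_n(\ov{\mf u}_-;\ov L(\la^\natural))$, this yields
\begin{equation*}
H_n\big(\mf{u}_-;L(\la)\big)\cong\bigoplus_{\mu\in P^+}L(\mf{l},\mu)^{\oplus m_n(\mu)},\qquad H_n\big(\ov{\mf{u}}_-;\ov{L}(\la^\natural)\big)\cong\bigoplus_{\mu\in P^+}L(\ov{\mf{l}},\mu^\natural)^{\oplus m_n(\mu)},
\end{equation*}
so that $\dim_\C\text{Hom}_{\mf{l}}[L(\mf{l},\mu),H_n(\mf{u}_-;L(\la))]=m_n(\mu)=\dim_\C\text{Hom}_{\ov{\mf{l}}}[L(\ov{\mf{l}},\mu^\natural),H_n(\ov{\mf{u}}_-;\ov{L}(\la^\natural))]$. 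Substituting into the defining sums gives $\ell_{\mu\la}(q)=\wt{\ell}_{\mu^\theta\la^\theta}(q)=\ov{\ell}_{\mu^\natural\la^\natural}(q)$, as required.

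Because the deep work (exactness of $T,\ov T$, compatibility with $\wt d$, and the homology isomorphism) has already been done in \thmref{matching:KL} and \corref{matching:KL1}, I do not foresee a substantive obstacle in this step; the only point requiring care is ensuring that the irreducible decomposition of $H_n(\wt{\mf u}_-;\wt L(\la^\theta))$ is transported by $T$ and $\ov T$ with matching multiplicities. This is guaranteed by complete reducibility together with the fact that distinct $L(\wt{\mf{l}},\mu^\theta)$ (for distinct $\mu\in P^+$) are sent to pairwise non-isomorphic $L(\mf{l},\mu)$ (respectively $L(\ov{\mf{l}},\mu^\natural)$), which is immediate since the highest weights $\mu$ and $\mu^\natural$ determine $\mu$ uniquely.
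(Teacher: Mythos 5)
Your proposal is correct and takes essentially the same route as the paper: the authors present \thmref{matching:KLpol} simply as a ``reformulation of \corref{matching:KL1}'' and give no separate proof, while you have spelled out precisely why that reformulation is valid (complete reducibility of the homology as an $\wt{\mf l}$-module, $T$ and $\ov T$ commuting with direct sums, and \lemref{lmod2lmod} matching simples with matching multiplicities).
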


\section{Equivalences of categories}\label{sec:category}
\subsection{}

The following is standard (see, for example, \cite[Lemma 2.1.10]{Ku}).

\begin{prop}\label{Ku}
Let ${M}\in {\mc{O}}$. Then there exists a (possibly infinite) increasing filtration
$0={M}_0\subset {M}_1\subset{M}_2\subset\cdots$ of ${\G}$-modules such that
\begin{itemize}
\item[(i)] $\bigcup_{i\ge 0}{M}_i={M}$,

\item[(ii)] ${M}_i/{M}_{i-1}$ is a highest weight module of highest weight $\nu_i$
    with $\nu_i\in{P^+}$, for $i\ge 1$,

\item[(iii)] the condition $\nu_i-\nu_j\in\sum_{\alpha\in{\Pi}}\Z_+\alpha$ implies
    that $i<j$,

\item[(iv)] for any weight $\mu$ of ${M}$, there exists an $r\in \N$ such that
    $({M}/{M}_{r})_\mu=0$.
\end{itemize}

Similar statements hold for $\ov{M}\in {\ov{\mc{O}}}$ and $\wt{M}\in \wt{{\mc{O}}}$.
\end{prop}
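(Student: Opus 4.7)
The plan is to construct the filtration inductively by enumerating the $\mf l$-highest weight vectors of $M$ in an order compatible with the root partial order, taking higher weights first, and letting $M_i$ be the $\G$-submodule generated by the first $i$ such vectors. The key preliminary observation is that axiom (ii) of $\mc O$, combined with finite-dimensionality of weight spaces, forces a strong finiteness property: for any $\mf l$-highest weight $\mu$ of $M$, the set of $\mf l$-highest weights $\nu$ of $M$ with $\nu - \mu \in \sum_{\alpha \in \Pi} \Z_+\alpha$ is finite, because such $\nu$ lies in the finite lattice between $\mu$ and a bounding $\la_i$, and each isotypic component $L(\mf l, \nu)$ occurs with finite multiplicity in $M$.

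First I fix a basis of $\mf l$-highest weight vectors inside each $L(\mf l,\mu)$-isotypic component of $M$ and enumerate these basis vectors as $v_1, v_2, \ldots$ with weights $\nu_1, \nu_2, \ldots$ so that $i \neq j$ and $\nu_i - \nu_j \in \sum_{\alpha \in \Pi} \Z_+\alpha$ force $i < j$. Existence of such an enumeration follows from the finite-upper-set property: given any initial listing, process its entries one by one, and upon each not-yet-enumerated $v$ append the finite set of still-unlisted vectors of weight above that of $v$, ordered highest-first by a linear extension of the induced partial order. I then set $M_i := U(\G)\cdot\mathrm{span}\{v_1, \ldots, v_i\}$. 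This gives an increasing chain of $\G$-submodules whose union contains every $\mf l$-highest weight vector of $M$ and hence all of $M$ by complete reducibility over $\mf l$; conditions (i) and (iii) are immediate.

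The crux is to verify that each quotient $M_i/M_{i-1}$ is a highest weight $\G$-module of highest weight $\nu_i$. I show that the image $\bar v_i$ of $v_i$ in $M/M_{i-1}$ is $\mf b$-singular: it is already an $\mf l$-highest weight vector of weight $\nu_i$, so it suffices to verify $\mf u \cdot \bar v_i = 0$. Otherwise some nonzero weight $\nu_i + \alpha$ with $\alpha \in \Phi(\mf u)$ would appear in $M/M_{i-1}$, whence $M/M_{i-1}$ would contain a summand $L(\mf l, \nu')$ with $\nu' \geq \nu_i + \alpha > \nu_i$ in the root order. By the enumeration every $v_j$ with $\nu_j = \nu'$ satisfies $j < i$, and by semisimplicity of $M$ over $\mf l$ each $L(\mf l, \nu')$-summand of $M$ is spanned over $U(\mf l)$ by such a $v_j$, hence is already contained in $M_{i-1}$. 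This forces the $L(\mf l, \nu')$-isotypic component of $M/M_{i-1}$ to vanish, a contradiction. Consequently $M_i/M_{i-1} = U(\G)\bar v_i$ is a highest weight module of highest weight $\nu_i \in P^+$.

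Property (iv) is a direct consequence of the same finiteness: for any weight $\mu$, the weight space $M_\mu$ is finite-dimensional and is contained in the sum of the finitely many $L(\mf l, \nu)$-summands of $M$ with $\nu \geq \mu$; the finitely many associated vectors $v_j$ carry indices bounded by some $r$, so $M_\mu \subseteq M_r$ and $(M/M_r)_\mu = 0$. The main technical obstacle I anticipate is the combinatorial bookkeeping in the enumeration step, ensuring that it is simultaneously exhaustive and order-compatible, together with the careful use of complete reducibility of the $\mf l$-action in the analysis of $M_i/M_{i-1}$. The identical arguments apply verbatim to $\ov{M} \in \ov{\mc O}$ and $\wt M \in \wt{\mc O}$ using parts (ii) and (iii) of Lemma~\ref{lem:paraverma}.
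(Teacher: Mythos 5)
Your proof is essentially the standard argument behind this proposition, which the paper does not prove itself (it cites Kumar, Lemma~2.1.10). The key steps — extracting the finite-upper-set property from axiom~(ii) of $\mc O$ together with finite weight multiplicities, enumerating $\mf{l}$-highest weight vectors with dominance-higher weights first, and checking that $\bar v_i$ is $\mf b$-singular in $M/M_{i-1}$ by pushing any would-be higher weight into $M_{i-1}$ via the enumeration and $\mf{l}$-semisimplicity — are all correctly identified and correctly executed.

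There is one small gap you should close. Setting $M_i := U(\G)\cdot\mathrm{span}\{v_1,\dots,v_i\}$ does not guarantee $M_{i-1}\subsetneq M_i$: it can happen that $v_i\in M_{i-1}$, in which case $M_i=M_{i-1}$ and $M_i/M_{i-1}=0$ is not a highest weight module, so condition~(ii) fails for that index. (A concrete instance: if $M=\Delta(\la)$ is itself a parabolic Verma module, then $v_1$ generates all of $M$, so $M_1=M$ and every later quotient vanishes.) Your singularity argument actually shows more, namely that \emph{if} $v_i\notin M_{i-1}$ then $M_i/M_{i-1}=U(\G)\bar v_i$ is a highest weight module of weight $\nu_i$. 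So the fix is simply to discard the indices $i$ with $M_i=M_{i-1}$ and re-index the surviving strictly increasing chain; conditions~(i), (iii) and~(iv) pass to this subfiltration unchanged, since it is a subsequence of the original enumeration and $\bigcup_i M_i$ is unaffected. With that adjustment your proof is complete.
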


Let $\wt{\mc{O}}^f$ denote the full subcategory of $\wt{\mc{O}}$ consisting of finitely
generated ${U}(\wt{\G})$-modules. The categories $\ov{\mc{O}}^f$ and $\mc{O}^f$ are
defined in a similar fashion.

\propref{Ku} implies the following.

\begin{prop}\label{filtration}
Let ${M}\in {{\mc{O}}}$. Then ${M}\in {{\mc{O}^f}}$ if and only if there exists a finite
increasing filtration $0={M}_0\subset {M}_1\subset{M}_2\subset\cdots\subset{M}_k={M}$ of
${\G}$-modules such that ${M}_i/{M}_{i-1}$ is a highest weight module of highest weight
$\nu_i$ with $\nu_i\in{{P^+}}$, for $1\le i\le k$. Similar statements hold for $\ov{M}\in
{\ov{\mc{O}}}$ and $\wt{M}\in \wt{{\mc{O}}}$.
\end{prop}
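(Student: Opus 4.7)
The plan is to prove both implications, treating the $\mathfrak{g}$ case in detail since the arguments for $\bar{\mathfrak{g}}$ and $\widetilde{\mathfrak{g}}$ are formally identical.

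For the easy direction, suppose $M$ admits a finite filtration $0 = M_0 \subset M_1 \subset \cdots \subset M_k = M$ whose successive quotients $M_i/M_{i-1}$ are highest weight modules of highest weights $\nu_i \in P^+$. Then each $M_i/M_{i-1}$ is cyclic, generated by a highest weight vector. Lifting each such highest weight vector to an element $\bar{m}_i \in M_i$, I would argue by induction on $i$ that $M_i$ is generated as a $U(\mathfrak{g})$-module by $\bar{m}_1, \ldots, \bar{m}_i$. Hence $M = M_k$ is generated by $\bar{m}_1, \ldots, \bar{m}_k$, so $M \in \mathcal{O}^f$.

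For the converse, suppose $M \in \mathcal{O}^f$, so $M$ is generated as a $U(\mathfrak{g})$-module by finitely many elements $m_1, \ldots, m_r$. By Proposition~\ref{Ku}, there exists an increasing filtration $0 = M_0 \subset M_1 \subset M_2 \subset \cdots$ of $\mathfrak{g}$-submodules of $M$ satisfying conditions (i)--(iv) of that proposition. By condition (i), $\bigcup_{i\ge 0} M_i = M$, so for each $j = 1, \ldots, r$ there exists an index $i_j$ with $m_j \in M_{i_j}$. Setting $N := \max\{i_1, \ldots, i_r\}$, all generators lie in the submodule $M_N$. Since $M_N$ is a $\mathfrak{g}$-submodule of $M$ containing a generating set, we conclude $M = M_N$, and the filtration $0 = M_0 \subset M_1 \subset \cdots \subset M_N = M$ has the required form.

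There is essentially no obstacle here: the key input, the existence of the possibly infinite filtration together with the union property $\bigcup M_i = M$, is already packaged in Proposition~\ref{Ku}, and the finite-generation hypothesis then immediately forces only finitely many stages to be needed. The arguments for $\ov{M} \in \ov{\mc{O}}$ and $\wt{M} \in \wt{\mc{O}}$ are verbatim substitutions, using the corresponding versions of Proposition~\ref{Ku} stated in its last sentence.
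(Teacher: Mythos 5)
Your proof is correct, and it takes exactly the route the paper intends: the paper dismisses this proposition with the single remark that it follows from Proposition~\ref{Ku}, and your argument supplies precisely the details behind that remark (lifting highest weight vectors inductively to get finite generation in the easy direction, and using the union property (i) of Proposition~\ref{Ku} together with finite generation to truncate the filtration at a finite stage in the converse).
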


The following proposition is the converse to \thmref{matching:modules}.
\begin{prop}\label{hw-onto}
\begin{itemize}
\item[(i)] If ${V}(\la)$ is a highest weight ${\G}$-module of highest weight
    $\la\in{P^+}$, then there is a highest weight ${\wt{\G}}$-module
    $\wt{V}(\la^\theta)$ of highest weight $\la^\theta$ such that
    $T(\wt{V}(\la^\theta))={V}(\la)$.

\item[(ii)] If $\ov{V}(\la^\natural)$ is a highest weight $\ov{\G}$-module of highest
    weight $\la^\natural$ with $\la\in P^+$, then there is a ${\wt{\G}}$-module
    $\wt{V}(\la^\theta)$ of highest weight $\la^\theta$ such that
    $\ov{T}(\wt{V}(\la^\theta))=\ov{V}(\la^\natural)$.
\end{itemize}
\end{prop}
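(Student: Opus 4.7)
We focus on part (i); part (ii) follows by the same argument with $\ov{T}$ in place of $T$ throughout (and \propref{prop:change}(ii) in place of \propref{prop:change}(i) at the relevant point).

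The plan is to lift the presentation of $V(\la)$ as a quotient of the parabolic Verma module to a corresponding quotient of $\wt{\Delta}(\la^\theta)$. Write $V(\la)=\Delta(\la)/N$ for a $\G$-submodule $N$. Since $\Delta(\la)=T(\wt{\Delta}(\la^\theta))$ by \thmref{matching:modules}, $N$ naturally embeds as a subspace of $\wt{\Delta}(\la^\theta)$. Let
\[
\wt{N} := U(\wt{\G})\cdot N \;\subseteq\; \wt{\Delta}(\la^\theta)
\]
be the $\wt{\G}$-submodule generated by $N$, and define $\wt{V}(\la^\theta):=\wt{\Delta}(\la^\theta)/\wt{N}$. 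By exactness of $T$ (\corref{functor}) one has $T(\wt{V}(\la^\theta))=\Delta(\la)/T(\wt{N})$, so the proposition reduces to the identity $T(\wt{N})=N$. The inclusion $N\subseteq T(\wt{N})$ is immediate since $N \subseteq \wt{N}$ and all weights of $N$ lie in $\h^*$; once the reverse inclusion is established, nontriviality of $\wt{V}(\la^\theta)$ is automatic, for otherwise $\wt{N}=\wt{\Delta}(\la^\theta)$ would force $T(\wt{N})=\Delta(\la)\supsetneq N$.

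The main obstacle is the reverse inclusion $T(\wt{N})\subseteq N$. The tool is the grading of $\wt{\G}$ by the $(\h')^*$-component of weights, where $\h'\subseteq \wt{\h}$ is spanned by $\{\widehat{E}_r:r\in \hf+\Z_+\}$. Since every root of $\G$ lies in $\h^*$, we have $[\h',\G]=0$, and therefore the weight-zero piece is the Lie subalgebra $\wt{\G}_0=\G\oplus \h'$, giving an induced grading $U(\wt{\G})=\bigoplus_\mu U(\wt{\G})_\mu$. Because the weights of $N$ lie in $\h^*$, one checks $T(\wt{N})=U(\wt{\G})_0\cdot N$, so the task becomes showing $U(\wt{\G})_0\cdot N\subseteq N$. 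The elements of $\h'$ annihilate every $\h^*$-weight vector, so $U(\wt{\G}_0)$ acts on $\Delta(\la)$ through $U(\G)$ and thus preserves $N$. What remains is to handle the elements of $U(\wt{\G})_0$ lying outside $U(\wt{\G}_0)$: by PBW (ordering so that $\wt{\G}_0$-vectors come on the right), each such element is a finite sum of monomials $y\cdot g$ with $g\in U(\G\oplus \h')$ and $y$ a product of root vectors $x_\gamma$ for $\gamma\in\wt{\Phi}\setminus\Phi$ whose $(\h')^*$-weights sum to zero. An induction on the length of $y$ reduces the problem: each adjacent pair $x_\alpha x_\beta$ with $\alpha+\beta\in\h^*$ can be rewritten via the (super)commutation $[x_\alpha,x_\beta]\in \G$ (which preserves $N$) plus $\pm\,x_\beta x_\alpha$, decreasing the length of $y$ by at least two.

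The hardest aspect is the combinatorial bookkeeping in this induction, where one must track super signs and verify termination; this is facilitated by the tail diagram $\wt{\mf T}$ being of type $A$, so the argument parallels (and can be imported from) the type-$A$ treatment in \cite{CL}. An alternative route is to first apply the odd reflections of Section \ref{oddreflection} to pass to the Borel $\wt{\mf{b}}^{c}(n)$ for $n$ sufficiently large, with respect to which \propref{prop:change}(i) identifies $v_\la$ itself as the highest weight vector of $\wt{\Delta}(\la^\theta)=U(\wt{\G})v_\la$, and the subdiagram of simple roots to the left of the distinguished $\bigotimes$ is that of $\G_n$; this makes the compatibility of the $U(\wt{\G})$-action with $N$ visible at the level of defining relations for $v_\la$ and provides a cleaner (though essentially equivalent) path to the same conclusion.
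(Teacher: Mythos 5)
The first reduction in your proposal is sound and, in fact, captures the same object the paper ultimately produces: you set $\wt{N}:=U(\wt{\G})\cdot N$, reduce via exactness of $T$ to showing $T(\wt{N})=N$, and observe correctly that this amounts to $U(\wt{\G})_0\cdot N\subseteq N$ (with $U(\wt{\G})_0$ the subalgebra of $U(\wt\G)$ of $(\h')^*$-weight zero). The gap is in the PBW/straightening induction you give for this containment. Rewriting an adjacent pair as
\[
x_\alpha x_\beta \;=\; [x_\alpha,x_\beta] \;\pm\; x_\beta x_\alpha
\]
produces one term in which the length of $y$ drops by two, but the other term $x_\beta x_\alpha$ has \emph{exactly the same} length as $x_\alpha x_\beta$; the monomial has merely been reordered, not shortened. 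So the claim ``decreasing the length of $y$ by at least two'' is false, and the induction on length does not terminate. Concretely, if $\alpha\in\wt\Phi\setminus\Phi$ with $-\alpha\in\wt\Phi\setminus\Phi$, then for $n\in N$ of $\h^*$-weight $\mu$ one gets $x_\alpha x_{-\alpha}n \pm x_{-\alpha}x_\alpha n = \mu(h_\alpha)n\in N$, which tells you only that the two terms are in $N$ \emph{simultaneously or not at all}; the rewriting cannot decide which. The ``alternative route'' via $\wt{\mf b}^c(n)$ is stated too loosely to fill this hole: $N$ is in general not finitely generated as stated, so there is no single $n$ that controls it, and it is unclear how the change of Borel yields the required compatibility without an argument of the type the paper actually gives.

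The paper's proof takes a genuinely different route around exactly this difficulty. Rather than showing directly that $U(\wt{\G})_0$ preserves $N$, it invokes Proposition~\ref{Ku} to filter $W$ (your $N$) by $\G$-submodules $W_i$ with highest-weight subquotients, picks in each $W_i$ a weight vector $v_i$ whose image generates $W_i/W_{i-1}$, and then --- using the $\wt{\mf l}$-complete reducibility of $\wt\Delta(\la^\theta)$ --- extracts an $\wt{\mf b}\cap\wt{\mf l}$-highest weight vector $\wt v_i$ of the $\wt{\mf l}$-module $U(\wt{\mf l})v_i$. The lift $\wt W_i$ is then \emph{defined} to be generated by $\wt v_1,\dots,\wt v_i$, and one checks inductively that $T(\wt W_i)=W_i$ by matching highest-weight subquotients via Theorem~\ref{matching:modules}. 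This circumvents the commutator bookkeeping entirely: the compatibility of $U(\wt\G)$ with $N$ is established one highest-weight layer at a time rather than monomial by monomial. If you want to salvage your own framing, the cleanest repair is to graft in precisely this filtration-and-lift argument in place of the PBW step.
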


\begin{proof}
We shall only prove (i), as (ii) is similar.  We let $W$ be the kernel of the natural
projection from the $\Delta(\la)$ to ${V}(\la)$. Now \thmref{matching:modules} says that
$T(\wt{\Delta}(\la^\theta))=\Delta(\la)$. Thus, by the exactness of functor $T$, it
suffices to prove that $W$ lifts to a submodule $\wt{W}$ of $\wt{\Delta}(\la^\theta)$
such that $T(\wt{W})=W$.

There is an increasing filtration $0=W_0\subset W_1\subset W_2\subset\cdots$ of
$\G$-modules for $W$ satisfying the properties of \propref{Ku}. For each $i>0$, let $v_i$
be a weight vector in $W_i$ such that $v_i+W_{i-1}$ is a non-zero highest weight vector
of $W_i/W_{i-1}$. Observe that $\wt{\Delta}(\la^\theta)=\bigoplus_{\mu\in
P^+}L({\wt{\mf{l}}},\mu^\theta)^{m(\mu)}$ and ${\Delta}(\la)=\bigoplus_{\mu\in
P^+}L({{\mf{l}}},\mu)^{m(\mu)}$ are completely reducible $\wt{\mf{l}}$-  and
$\mf{l}$-modules, respectively. Then, for each $i>0$, there is a highest weight vector
$\wt{v}_i$ of the $\wt{\mf{l}}$-module $U(\wt{\mf{l}})v_i$ with respect to the Borel
subalgebra $\wt{\mf{b}}\cap\wt{\mf{l}}$. Let $\wt{W}_i$ be the submodule of
$\wt{\Delta}(\la^\theta)$ generated by $\wt{v}_1, \wt{v}_2,\ldots, \wt{v}_i$ and set
$\wt{W}_0=0$. It is easy to see $\wt{v}_i$ is a highest weight vector of the
$\wt{\G}$-module $\wt{W}_i/\wt{W}_{i-1}$. Let $\wt{W}=\bigcup_{i\ge 1}\wt{W}_i$. It is
clear that $T(\wt{W}_i/\wt{W}_{i-1})\cong {W}_i/{W}_{i-1}$ for all $i$. This implies
$T(\wt{W}_i)=W_i$ for all $i$ and hence $T(\wt{W})=W$.
\end{proof}
\subsection{The categories $\wt{\mc{O}}^{f,\bar{0}}$ and
$\ov{\mc{O}}^{f,\bar{0}}$} \label{sec:51}

Define an equivalence relation $\sim$ on $\wt{\h}^*$ by letting $\mu \sim \nu$ if and
only if $\mu -\nu$ lies in the root lattice $\Z \wt \Phi$ of $\DG$. For each such
equivalence class $[\mu]$, fix a representative $[\mu]^o \in \wt{\h}^*$ and declare
$[\mu]^o$ to have $\Z_2$-grading $\bar 0$. For $\epsilon=\bar{0},\bar{1}$, set
(cf.~\cite[\S4-e]{B} and \cite[Section~2.5]{CL} for type $A$)
\begin{eqnarray*}
 {\wt{\h}^*}_\epsilon &=&
 \Big \{\mu\in
\wt{\h}^*\mid \sum_{r\in {1/ 2}+\Z_+}(\mu-[\mu]^o)
(\widehat{E}_{r})\equiv {\epsilon} \,\,(\text{mod }2) \Big \},
\text{ for } \xx =\mf{b, c, d},
 \\
 {\wt{\h}^*}_\epsilon &=&
 \Big \{\mu\in
\wt{\h}^*\mid \sum_{i=1}^m (\mu-[\mu]^o)(\widehat{E}_{-i}) +
\sum_{r\in \N}(\mu-[\mu]^o)(\widehat{E}_{r})\equiv {\epsilon}
\,\,(\text{mod }2) \Big \}, \text{ for } \xx =\mf b^\bullet.
\end{eqnarray*}
Recall that $\wt{V} \in \wt{\mc O}$ is a semisimple $\wt{\h}$-module with
$\wt{V}=\bigoplus_{\gamma\in\wt{\h}^*}\wt{V}_\gamma$. Then $\wt{V}$ acquires a natural
$\Z_2$-grading $\wt{V}=\wt{V}_{\bar{0}}\bigoplus \wt{V}_{\bar{1}}$ given by
 \begin{equation}\label{wt-Z2-gradation}
 \wt{V}_{\ep}
  :=\bigoplus_{\mu\in{\wt{\h}^*}_{\ep}}\wt{V}_{\mu}, \qquad \ep
  =\bar{0},\bar{1},
% \wt{V}_{\bar{1}}
%  :=\bigoplus_{\mu\in{\wt{\h}^*}_{\bar{1}}}\wt{V}_{\mu},
 \end{equation}
 which is compatible with the $\Z_2$-grading on $\DG$.

We define $\wt{\mc{O}}^{\bar{0}}$ and $\wt{\mc{O}}^{f,\bar{0}}$ to be the full
subcategories of $\wt{\mc{O}}$ and $\wt{\mc{O}}^f$, respectively, consisting of objects
with $\Z_2$-gradation given by \eqnref{wt-Z2-gradation}. Note that the morphisms in
$\wt{\mc{O}}^{\bar{0}}$ and $\wt{\mc{O}}^{f,\bar{0}}$ are of degree $\bar{0}$. For
$\wt{M}\in\wt{\mc{O}}$, let $\widehat{\wt{M}}\in\wt{\mc{O}}^{\bar{0}}$ denote the
$\wt{\G}$-module $\wt{M}$ equipped with the $\Z_2$-gradation given by
\eqnref{wt-Z2-gradation}. It is clear that $\widehat{\wt{M}}$ is isomorphic to $\wt{M}$
in $\wt{\mc{O}}$. Thus $\wt{\mc{O}}$ and $\wt{\mc{O}}^{\bar{0}}$ have isomorphic
skeletons and hence they are equivalent categories.  Similarly, $\wt{\mc{O}}^f$ and
$\wt{\mc{O}}^{f,\bar{0}}$ are equivalent categories.

Analogously define ${\mc{O}}^{\bar{0}}$, ${\mc{O}}^{f,\bar{0}}$, $\ov{\mc{O}}^{\bar{0}}$
and $\ov{\mc{O}}^{f,\bar{0}}$ to be the respective full subcategories of ${\mc{O}}$,
${\mc{O}}^f$, $\ov{\mc{O}}$ and $\ov{\mc{O}}^f$ consisting of objects with
$\Z_2$-gradation given by \eqnref{wt-Z2-gradation}. Similarly,
${\mc{O}}^{\bar{0}}\cong{\mc{O}}$, $\ov{\mc{O}}^{\bar{0}}\cong\ov{\mc{O}}$, and also
${\mc{O}}^{f,\bar{0}}\cong{\mc{O}}^f$, $\ov{\mc{O}}^{f,\bar{0}}\cong\ov{\mc{O}}^f$. (In
case of $\mc O$ and $\mc O^f$, these remarks are trivial except for the type $\mf
b^\bullet$ which corresponds to a Lie superalgebra).

\subsection{Equivalence of the categories}

Recall the functors $T$ and $\ov{T}$ from \secref{Tfunctors}. The following is the main
result of this section.

\begin{thm}\label{thm:equivalence}
\begin{itemize}
\item[(i)] $T:\wt{\mc{O}}\rightarrow{\mc{O}}$ is an equivalence of categories.

\item[(ii)] $\ov{T}:\wt{\mc{O}}\rightarrow\ov{\mc{O}}$ is an equivalence of
    categories.
\end{itemize}
Hence, the categories $\mc{O}$ and $\ov{\mc{O}}$ are equivalent.
\end{thm}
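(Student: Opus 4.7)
The plan is to verify that $T$ and $\ov T$ are each fully faithful and essentially surjective, whence $\mc{O}\simeq\wt{\mc{O}}\simeq\ov{\mc{O}}$. First I would replace the three categories by the equivalent $\Z_2$-graded subcategories $\mc{O}^{\bar{0}}$, $\ov{\mc{O}}^{\bar{0}}$, $\wt{\mc{O}}^{\bar{0}}$ of \secref{sec:51}, so that $T$ and $\ov T$ make sense on the morphism level with even morphisms throughout.

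For full faithfulness of $T$, the starting point is to show that for $\mu\in P^+$ and any $\wt{N}\in\wt{\mc{O}}$, the map
$$T:\text{Hom}_{\wt{\G}}\bigl(\wt{\Delta}(\mu^\theta),\wt{N}\bigr)\longrightarrow\text{Hom}_{\G}\bigl(\Delta(\mu),T(\wt{N})\bigr)$$
is a bijection. By Frobenius reciprocity both sides identify with $\wt{\mf l}$- (resp.~$\mf{l}$-) module maps from the Levi irreducibles $L(\wt{\mf{l}},\mu^\theta)$ (resp.~$L(\mf{l},\mu)$) into the $\wt{\mf{u}}$-invariants of $\wt{N}$ (resp.~$\mf{u}$-invariants of $T(\wt{N})$), and \lemref{lmod2lmod} together with \lemref{lem:paraverma} identifies these two Hom spaces directly. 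One then bootstraps to all finitely generated $\wt{M}\in\wt{\mc{O}}^f$ by five-lemma induction along the highest weight filtration of \propref{filtration}, using exactness of $T$ (\corref{functor}), and extends to arbitrary $\wt{M}\in\wt{\mc{O}}$ via the exhaustive filtration of \propref{Ku} together with the fact that weight spaces are eventually stable in each filtration piece.

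For essential surjectivity on $\mc{O}^f$, given $M\in\mc{O}^f$ with a finite highest weight filtration $0=M_0\subset\cdots\subset M_k=M$, I would lift each subquotient via \propref{hw-onto} and lift the extension data step by step using the full faithfulness just established, producing $\wt{M}\in\wt{\mc{O}}^f$ with $T(\wt{M})\cong M$. For a general $M\in\mc{O}$, I would apply \propref{Ku} to obtain an exhaustive filtration by finitely generated submodules, lift it inductively to a nested chain in $\wt{\mc{O}}$, and take the union.

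The main obstacle, as the authors emphasize in the introduction, arises for $\ov T$: parabolic Verma modules $\ov{\Delta}(\mu^\natural)$ in $\ov{\mc{O}}$ need not have finite composition series, so arguments that rely on Jordan--H\"older bookkeeping fail. The strategy above is designed precisely to sidestep this, since it is phrased entirely in terms of highest weight filtrations (\propref{filtration} and \propref{Ku}) and relies on \propref{hw-onto}, which lifts arbitrary highest weight modules and not merely Vermas or irreducibles. Substituting $\wt{\mf{b}}^{s}(n)$ for $\wt{\mf{b}}^{c}(n)$ (via \propref{prop:change}(ii)) and $\la^\natural$ for $\la$ throughout, the same argument shows that $\ov T$ is an equivalence; composing with a quasi-inverse of $T$ then yields the promised super duality $\mc{O}\simeq\ov{\mc{O}}$.
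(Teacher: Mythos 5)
Your outline matches the paper's overall strategy very closely: reduce to the $\bar 0$-graded subcategories of \secref{sec:51}, prove the finitely generated version first via a $\mathrm{Hom}$-space comparison and induction along the highest weight filtrations of \propref{filtration} (precisely to circumvent the failure of finite composition series in $\ov{\mc{O}}$), use \propref{hw-onto} for essential surjectivity, and pass to arbitrary objects via the exhaustive filtrations of \propref{Ku} and an inverse-limit argument. That part of your proposal is sound and is essentially what the paper does (the paper itself points to \cite{CL} for the technical core, just as you are sketching it).

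However, your base case is misaligned with the induction that follows it, and this is a genuine gap. You establish the $\mathrm{Hom}$ isomorphism only for $\wt{M} = \wt{\Delta}(\mu^\theta)$ via Frobenius reciprocity, but the subquotients $\wt{M}_i/\wt{M}_{i-1}$ in the filtration of \propref{filtration} are \emph{arbitrary highest weight modules}, not parabolic Verma modules. A five-lemma argument along that filtration therefore requires the $\mathrm{Hom}$ isomorphism for arbitrary highest weight modules $\wt{V}(\mu^\theta)$ as the base case, and Frobenius reciprocity does not give this directly: $\text{Hom}_{\wt{\G}}(\wt{V}(\mu^\theta),\wt{N})$ is a \emph{subspace} of $\text{Hom}_{\wt{\G}}(\wt{\Delta}(\mu^\theta),\wt{N})$ cut out by vanishing on the kernel $\wt{K}$ of $\wt{\Delta}(\mu^\theta)\twoheadrightarrow\wt{V}(\mu^\theta)$, and one must check that the $T$-image subspace matches $\text{Hom}_{\G}(V(\mu),T(\wt{N}))$ inside $\text{Hom}_{\G}(\Delta(\mu),T(\wt{N}))$. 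This is exactly the content the paper defers to the analogue of \cite[Lemma~5.8]{CL}, and it has real content: for instance, one way to close the gap is to first observe that $T$ is faithful on morphisms (because every $\wt{M}\in\wt{\mc O}$ is generated over $U(\wt{\G})$ by $T(\wt{M})$, which follows from \propref{prop:change}(i)), and then deduce injectivity of $\text{Hom}(\wt K,\wt N)\to\text{Hom}(T(\wt K),T(\wt N))$ in the long exact sequence. You should also note that even the Verma case is not \emph{formally} immediate: the fullness direction of your Frobenius reciprocity step requires showing that a $\mf{b}$-singular vector in $T(\wt{N})^{\mf{u}}$ is in fact $\wt{\mf{u}}$-annihilated, which again rests on \propref{prop:change}(i), \lemref{lem:uinvar}, and the particular shape of the simple roots of $\wt{\mf{b}}^{c}(n)$ not lying in $\wt{\mf{l}}$.

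A second, smaller omission: the five-lemma induction does not close using only $\mathrm{Hom}$ comparisons; applying $\text{Hom}(-,\wt{N})$ to $0\to\wt{M}_{k-1}\to\wt{M}\to\wt{M}/\wt{M}_{k-1}\to 0$ produces $\text{Ext}^1$ terms, and you need the induced map on $\text{Ext}^1(\wt{M}/\wt{M}_{k-1},\wt{N})$ to be (at least) injective. The paper handles this by invoking the fact that the $\mathrm{Hom}$ isomorphism implies the $\text{Ext}^1$ isomorphism (the analogue of \cite[Lemma~5.12]{CL}); your write-up should state this explicitly, since otherwise the inductive step does not go through.
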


Since  $\ov{\mc{O}}^{\bar{0}}\cong \ov{\mc{O}}$ and $\wt{\mc{O}}^{\bar{0}}\cong
\wt{\mc{O}}$ it is enough to prove \thmref{thm:equivalence} for $\ov{\mc{O}}^{\bar{0}}$
and $\wt{\mc{O}}^{\bar{0}}$. In order to keep notation simple we will from now on drop
the superscript $\bar{0}$ and use $\ov{\mc{O}}$, $\wt{\mc{O}}$, $\ov{\mc{O}}^f$ and
$\wt{\mc{O}}^f$ to denote the respective categories $\ov{\mc{O}}^{\bar{0}}$,
$\wt{\mc{O}}^{\bar{0}}$, $\ov{\mc{O}}^{f,\bar{0}}$ and $\wt{\mc{O}}^{f,\bar{0}}$ for the
remainder of Section \ref{sec:category}.  Henceforth, when we write
$\wt{\Delta}(\la^\theta), \wt{L}(\la^\theta)\in\wt{\mc{O}}^f$, $\la\in P^+$, we will mean
the corresponding modules equipped with the $\Z_2$-gradation \eqnref{wt-Z2-gradation}.
Similar convention applies to $\ov{\Delta}(\la^\natural)$ and $\ov{L}(\la^\natural)$.

For ${M},{N}\in{\mc{O}}$ and $i\in \N$ the $i$th extension group ${\rm
Ext}_{{\mc{O}}}^i({M},{N})$ can be understood in the sense of Baer-Yoneda (see
e.g.~\cite[Chapter VII]{M}) and ${\rm Ext}_{{\mc{O}}}^0({M},{N}):={\rm
Hom}_{{\mc{O}}}({M},{N})$. In a similar way extensions in $\ov{\mc{O}}$ and $\wt{\mc{O}}$
are interpreted. From this viewpoint the exact functors $T$ and $\ov{T}$ induce natural
maps on extensions by taking the projection of the corresponding exact sequences.

\begin{thm}\label{thm:equivalence-f.g.} We have the following.
\begin{itemize}
\item[(i)] $T:\wt{\mc{O}}^f\rightarrow{\mc{O}}^f$ is an equivalence of categories.
\item[(ii)] $\ov{T}:\wt{\mc{O}}^f\rightarrow\ov{\mc{O}}^f$ is an equivalence of
    categories.
\item[(iii)] The categories ${\mc{O}}^f$ and $\ov{\mc{O}}^f$ are equivalent.
\end{itemize}
\end{thm}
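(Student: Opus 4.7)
The plan is to establish (i) and (ii) in parallel, after which (iii) follows by composing $\ov T$ with a quasi-inverse of $T$. I will describe the argument for (i); part (ii) is identical up to obvious notational changes. To show that $T : \wt{\mc O}^f \to \mc O^f$ is an equivalence, I will verify that it is essentially surjective and fully faithful. By \thmref{matching:modules}, $T$ already matches $\wt\Delta(\la^\theta) \leftrightarrow \Delta(\la)$ and $\wt L(\la^\theta) \leftrightarrow L(\la)$, which provides the base-case data.

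For essential surjectivity, given $M \in \mc O^f$, \propref{filtration} produces a finite filtration $0 = M_0 \subset M_1 \subset \cdots \subset M_k = M$ whose subquotients $M_i/M_{i-1}$ are highest weight $\G$-modules of weights $\nu_i \in P^+$. \propref{hw-onto}(i) lifts each $M_i/M_{i-1}$ to a highest weight $\wt\G$-module $\wt V_i$ with $T(\wt V_i) = M_i/M_{i-1}$. To reconstruct a preimage $\wt M$ of $M$, I would lift the filtration inductively: assuming a lift $\wt M_{i-1}$ of $M_{i-1}$ has been constructed, the extension defining $M_i$ must be pulled back along $T$, which reduces to showing that the induced map
\[
T_{*} : \mathrm{Ext}^{1}_{\wt{\mc O}^{f}}(\wt V_i,\,\wt M_{i-1}) \longrightarrow \mathrm{Ext}^{1}_{\mc O^{f}}(M_i/M_{i-1},\,M_{i-1})
\]
is surjective.

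For fully faithfulness, I would prove by double induction on the filtration lengths of $\wt M$ and $\wt N$ in $\wt{\mc O}^f$ that $T$ induces isomorphisms $\mathrm{Ext}^{i}_{\wt{\mc O}}(\wt M,\wt N) \to \mathrm{Ext}^{i}_{\mc O}(T\wt M, T\wt N)$ for $i = 0, 1$, splicing short exact sequences from the filtrations into long exact Ext sequences and invoking the five lemma. The base case reduces to pairs of highest weight modules. Bijectivity on $\mathrm{Hom}$ follows from \thmref{matching:modules} together with the observation that a $\wt\G$-homomorphism between highest weight modules is determined by the image of a highest weight generator, which lies in the subspace $T(\wt M) \subset \wt M$. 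Bijectivity on $\mathrm{Ext}^1$ between irreducibles follows from the standard identification of $\mathrm{Ext}^1(\wt L(\la^\theta), \wt L(\mu^\theta))$ with a subspace of $\mathrm{Hom}_{\wt{\mf l}}\bigl(L(\wt{\mf l},\mu^\theta), H_1(\wt{\mf u}_-; \wt L(\la^\theta))\bigr)$, together with \corref{matching:KL1} and \thmref{matching:KLpol}, which tell us precisely that $T$ identifies the corresponding components of Kostant homology.

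The main obstacle will be the $\mathrm{Ext}^1$-surjectivity for arbitrary pairs of highest weight modules, since this is what feeds back into the essential-surjectivity argument for the filtered reconstruction. In the type $A$ setting of \cite{CL}, this was accomplished using only the type $A$ structure of the auxiliary tail; since our tail diagram $\wt{\mf T}$ is still of type $A$, the same mechanism should apply, with the head diagram $\mf k$ contributing only through the identification $\wt{\mf l} \cap \mf k = \mf l \cap \mf k$ that was already exploited in \lemref{lmod2lmod}. A secondary technical point is to verify that the $\Z_2$-grading convention of \secref{sec:51} is preserved by each of the lifting steps, so that the equivalence descends to the refined categories $\wt{\mc O}^{f,\bar 0}$ and $\mc O^{f,\bar 0}$ on which \thmref{thm:equivalence-f.g.} is actually proved.
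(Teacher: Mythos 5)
Your overall strategy — reduce to finite filtrations by highest weight modules via \propref{filtration}, lift each step via \propref{hw-onto}, and prove $\mathrm{Hom}$/$\mathrm{Ext}^1$ bijections by induction on filtration length — matches the paper's. The paper explicitly stresses that it follows \cite[Theorem 5.1]{CL} with one modification: because parabolic Verma modules in $\ov{\mc O}$ need not have finite composition series, the induction runs over the length of a finite highest-weight filtration (from \propref{filtration}) rather than over composition length, and consequently the base case that must be settled directly is a pair of \emph{highest weight modules}, not a pair of irreducibles. You identify this point correctly in the $\mathrm{Hom}$ part of your argument.

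Where your proposal diverges, and where it has a genuine gap, is the $\mathrm{Ext}^1$ step. You propose to get bijectivity of $T$ on $\mathrm{Ext}^1$ for the base case directly from Kostant-homology matching (\corref{matching:KL1} and \thmref{matching:KLpol}). There are two problems. First, that identification relates $H_1(\wt{\mf u}_-;\cdot)$ to $\mathrm{Ext}^1_{\wt{\mc O}}(\wt\Delta(\mu^\theta),\cdot)$, not to $\mathrm{Ext}^1$ between two simple modules; $\mathrm{Ext}^1(\wt L(\mu^\theta),\wt L(\la^\theta))$ sits only as a subspace, and matching the ambient spaces under $T$ does not by itself match those subspaces — one would have to show the inclusions are intertwined by $T$. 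Second, and more seriously, your base case is a pair of irreducibles, but the induction you set up (and the one the paper uses) needs the base case for arbitrary pairs of \emph{highest weight} modules, precisely because composition series may be infinite here; the Kostant-homology route does not obviously extend to that setting. The paper sidesteps both difficulties by invoking \cite[Lemma 5.12]{CL}, which shows that $\mathrm{Hom}$ isomorphisms between (lifts of) objects with a highest-weight filtration already force the $\mathrm{Ext}^1$ isomorphisms, so the entire proof is reduced to the $\mathrm{Hom}$ bijection for highest weight modules — which is exactly the part your argument handles correctly. If you replace your Kostant-homology step by that reduction, your proposal becomes the paper's proof.
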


\thmref{thm:equivalence-f.g.} can be proved following a similar strategy as the one used
to prove \cite[Theorem 5.1]{CL}. To avoid repeating similar arguments at great length we
will just point out the main differences between their proofs. In \cite[Section 5]{CL}
the main point is to prove that the functor $T$ induces isomorphisms $\text{Hom}_{\wt{\mc
O}}(\wt{M},\wt{N})\cong \text{Hom}_{{\mc O}}({M},{N})$ and $\text{Ext}^1_{\wt{\mc
O}}(\wt{L},\wt{N})\cong \text{Ext}^1_{{\mc O}}({L},{N})$, for $\wt{L}$ irreducible, and
$\wt{M},\wt{N}$ having finite composition series. From this \cite[Theorem 5.1]{CL} is
derived easily. As the isomorphism of the $\text{Hom}$ spaces imply the isomorphism of
the $\text{Ext}^1$ spaces \cite[Lemma 5.12]{CL}, we are reduced to establish the
isomorphism of the $\text{Hom}$ spaces. To prove the isomorphism of the $\text{Hom}$
spaces therein, the idea is to prove this isomorphism first for $\wt{M}$ irreducible, and
then to use induction on the length of the composition series of $\wt{M}$ to establish
the general case.

Now, thanks to \propref{hw-onto}, we can proceed similarly as in \cite[Section 5]{CL} to
prove \thmref{thm:equivalence-f.g.}. For this purpose we replace $\wt{L}$ by a highest
weight module, and $\wt{M}$ and $\wt{N}$ by finitely generated modules. As finitely
generated modules possess finite filtrations whose subquotients are highest weight
modules (cf. \propref{filtration}), we can now borrow the same type of induction
arguments from \cite{CL}, now inducting on the length of such a filtration instead of the
length of a composition series. Therefore, the proof of the isomorphisms is again reduced
to a special case, namely when $\wt{M}$ is a highest weight module. This case can then be
proved using similar arguments as the ones given in the proof of \cite[Lemmas~5.8]{CL}.
The case of $\ov{T}$ is completely analogous.

Having \thmref{thm:equivalence-f.g.} at our disposal we can now prove
\thmref{thm:equivalence}.

\begin{proof}[Proof of \thmref{thm:equivalence}]
Since the proofs of  (i) and (ii) are similar, we shall only prove (i). (iii) follows
from (i) and (ii). For  every $M\in\mc{O}$, there is an increasing filtration
$0=M_0\subset M_1\subset M_2\subset\cdots$ of $\G$-modules for $M$ with
$M_i\in{\mc{O}}^f$ satisfying the properties of \propref{Ku}. The filtration $\{M_i\}$ of
$M$ lifts to a filtration $\{\wt{M}_i\}$ with $\wt{M}_i \in\wt{\mc{O}}^f$ such that
$T(\wt{M}_i)\cong M_i$ by \thmref{thm:equivalence-f.g.}. It is clear that we have
$\wt{M}:=\bigcup_{i\ge 0}\wt{M}_i\in\wt{\mc{O}}$ and $T(\wt{M})\cong M$.

It is well known that a full and faithful functor $F: \mc{C}\mapsto \mc{C'}$, satisfying
the property that for every $M'\in \mc{C}'$ there exists $M\in \mc{C}$ with $F(M)\cong
M'$, is an equivalence of categories (see e.g.~\cite[Proposition 1.5.2]{P}).

Therefore it remains to show that $T$ is full and faithful. By
\propref{Ku}, for $\wt{M} \in\wt{\mc{O}}$, we may choose an
increasing filtration of $\DG$-modules $0=\wt{M}_0\subset
\wt{M}_1\subset\wt{M}_2\subset\cdots$ such that $\bigcup_{i\ge
0}\wt{M}_i=\wt{M}$ and $\wt{M}_i/\wt{M}_{i-1}$ is a highest weight
module of highest weight $\nu^\theta_i$ with $\nu_i\in{P^+}$, for
$i\ge 1$. Then the direct limit of $\wt{M}_i$ is
$\underrightarrow{\lim}\,\wt{M}_i \cong \wt{M}$ and ${\rm
Hom}_{\wt{\mc{O}}}(\wt{M},\wt{N})\cong\underleftarrow{\lim}\, {\rm
Hom}_{\wt{\mc{O}}}(\wt{M}_i,\wt{N})$ for every $\wt{N}
\in\wt{\mc{O}}$. Similarly we have $\underrightarrow{\lim}\,{M}_i
\cong {M}$ and ${\rm
Hom}_{{\mc{O}}}({M},{N})\cong\underleftarrow{\lim}\,{\rm
Hom}_{{\mc{O}}}({M}_i,{N})$ for $N=T(\wt{N})$. Furthermore, we
have the following commutative diagram (where $\varphi
=\underleftarrow{\lim} T_{\wt{M}_i,\wt{N}}$):
\begin{eqnarray*}
\CD   {\rm
Hom}_{\wt{\mc{O}}}\big{(}\wt{M},\wt{N}\big{)}
@>\cong>>\underleftarrow{\lim}\,
{\rm Hom}_{\wt{\mc{O}}}\big{(}\wt{M}_i,\wt{N}\big{)} \\
 @VVT_{\wt{M},\wt{N}}V @VV\varphi V \\
 {\rm Hom}_{{\mc{O}}}\big{(}{M},N\big{)}
  @>\cong>>\underleftarrow{\lim}\,{\rm Hom}_{{\mc{O}}}\big{(}{M}_i,N\big{)}\\
\endCD
\end{eqnarray*}
Using a similar argument as the one given in \cite[Lemma
5.10]{CL}, where we replace the induction on the length of
composition series therein by induction on the length of finite
increasing filtration $0=\wt{M}_0\subset
\wt{M}_1\subset\wt{M}_2\subset\cdots\subset\wt{M}_i$, we show that
$T_{\wt{M}_i,\wt{N}}:{\rm
Hom}_{\wt{\mc{O}}}(\wt{M}_i,\wt{N})\rightarrow {\rm
Hom}_{{\mc{O}}}({M}_i,N)$ are isomorphisms for each $i$. Therefore
$\varphi$ is an isomorphism and hence $T_{\wt{M},\wt{N}}$ is an
isomorphism. This completes the proof.
\end{proof}

\section{Finite dimensional representations}\label{finite:dim:repn}

The main purpose of this section is to determine the extremal weights of finite
dimensional irreducible modules over the ortho-symplectic Lie superalgebras with integral
highest weights. It follows that all such finite dimensional irreducible modules for the
ortho-symplectic Lie superalgebras are in the category $\ov{\mc O}_n$. We note that the
finite dimensional irreducible modules of non-integral highest weights are typical and so
their characters are known \cite[Theorem 1]{K2}.

\subsection{Extremal weights for $\osp(2m+1|2n)$}
\label{sec:type B}

Let us denote the weights of the natural $\osp(2m+1|2n)$-module $\C^{2n|2m+1}$ by $\pm
\delta_i, 0, \pm \vep_j$ for $1\le i\le n, 1\le j \le m$. We call a weight {\em
integral}, if it lies in $\Z$-span of the $\delta_i$'s and $\ov{\ep}_j$'s. The {\em
standard} Borel subalgebra $\mc B^{\text{st}}$ of $\osp(2m+1|2n)$ is the one associated
to the following set of simple roots
\begin{center}
\hskip -3cm \setlength{\unitlength}{0.16in}
\begin{picture}(24,4)
\put(6,2){\makebox(0,0)[c]{$\bigcirc$}}
\put(8.4,2){\makebox(0,0)[c]{$\bigcirc$}}
\put(10.5,1.95){\makebox(0,0)[c]{$\cdots$}}
\put(12.85,2){\makebox(0,0)[c]{$\bigcirc$}}
\put(15.25,2){\makebox(0,0)[c]{$\bigotimes$}}
\put(17.4,2){\makebox(0,0)[c]{$\bigcirc$}}
\put(19.6,1.95){\makebox(0,0)[c]{$\cdots$}}
\put(21.9,2){\makebox(0,0)[c]{$\bigcirc$}}
\put(24.2,2){\makebox(0,0)[c]{$\bigcirc$}}
\put(6.4,2){\line(1,0){1.55}} \put(8.82,2){\line(1,0){0.8}}
\put(11.2,2){\line(1,0){1.2}} \put(13.28,2){\line(1,0){1.45}}
\put(15.7,2){\line(1,0){1.25}} \put(17.8,2){\line(1,0){1.0}}
\put(20.3,2){\line(1,0){1.2}} \put(22.3,1.8){$\Longrightarrow$}
\put(5.6,1){\makebox(0,0)[c]{\tiny $\delta_1-\delta_2$}}
\put(8.4,1){\makebox(0,0)[c]{\tiny $\delta_2-\delta_3$}}
\put(12.8,1){\makebox(0,0)[c]{\tiny $\delta_{n-1}-\delta_n$}}
\put(15.15,3){\makebox(0,0)[c]{\tiny $\delta_n-\vep_{1}$}}
\put(17.4,1){\makebox(0,0)[c]{\tiny $\vep_{1}-\vep_{2}$}}
\put(21.8,1){\makebox(0,0)[c]{\tiny $\vep_{m-1} -\vep_{m}$}}
\put(24.5,1){\makebox(0,0)[c]{\tiny $\vep_{m}$}}
\end{picture}
\end{center}
An arbitrary Dynkin diagram for $\osp(2m+1|2n)$ always has a type
$A$ end while the other end is a short (even or odd) root.
Starting from the type $A$ end, the simple roots for a Borel
subalgebra $\mc B$ of $\osp(2m+1|2n)$ give rise to a sequence of
$d_1$ $\delta$'s, $e_1$ $\vep$'s, $d_2$ $\delta$'s, $e_2$
$\vep$'s, $\ldots, d_r$ $\delta$'s, $e_r$ $\vep$'s and sequences
of $\pm 1$'s: $(\xi_i)_{1\le i \le n} \cup (\eta_j)_{1\le j \le m}$
(all the $d_i$ and $e_j$ are positive except possibly $d_1 =0$ or
$e_r =0$). Note that a Dynkin diagram contains a short {\em odd}
root exactly when $e_r =0$.
Let
$$
\texttt d_u =\sum_{a=1}^u d_a, \quad
\texttt e_u
=\sum_{a=1}^u e_a
$$ for $u=1, \ldots, r$, and let $\texttt d_0
=\texttt e_0 =0$. Note $\texttt d_r=n, \texttt e_r=m$. More
precisely, there exist a permutation $s$ of $\{1, \ldots, n\}$
and a permutation $t$ of $\{1, \ldots, m\}$, so that the simple
roots for $\mc B$ are given by
\begin{align*} \xi_i \delta_{s(i)} -\xi_{i+1} \delta_{s(i+1)},&
\quad 1\le i \le n,\; i \not \in \{\texttt d_u | u=1,\ldots, r\};
  \\
\eta_j \vep_{t(j)} -\eta_{j+1} \vep_{t(j+1)},& \quad 1 \le j \le
m, \; j \not \in \{\texttt e_u | u=1,\ldots, r\};
  \\
\xi_{\texttt d_u} \delta_{s({\texttt d_u})} - \eta_{1+\texttt
e_{u-1}} \vep_{t({1+\texttt e_{u-1}})}, &\quad \text{for } 1\le u
\le r \text{ if } e_r>0\;\; (\text{or } 1\le u< r \text{ if }
e_r=0);
  \\
\eta_{\texttt e_u} \vep_{t({\texttt e_u})} -\xi_{1+\texttt d_u}
\delta_{s({1+\texttt d_u})}, & \quad u=1,\ldots, r-1;
  \\
\eta_{\texttt e_r} \vep_{t({\texttt e_r})}, &\quad \text{ if } e_r>0  \quad
\quad (\text{or }  \xi_{\texttt d_r} \delta_{s({\texttt d_r})}
\text{ if } e_r=0).
\end{align*}

Recall a partition $\la=(\la_1,\la_2,\ldots)$ is called an $(n|m)$-{\em hook partition},
if $\la_{n+1}\le m$ (cf. \cite{BR, S}). For such a $\la$, we define
$$
\la^\# =(\la_1,\ldots, \la_n, \nu_1, \ldots, \nu_m),
$$
where $(\nu_1, \ldots, \nu_m)$ is the conjugated partition
of $(\la_{n+1}, \la_{n+2}, \ldots)$.

\begin{lem} \cite{K2} \label{kac hwt}
The irreducible $\osp(2m+1|2n)$-module of integral highest weight
$\sum_{i=1}^n\la_i\delta_i + \sum_{j=1}^m\ov{\la}_j\vep_j$ with respect to the standard
Borel subalgebra is finite dimensional if and only if
$(\la_1,\ldots,\la_n,\ov{\la}_1,\ldots,\ov{\la}_m)=\la^\#$ for some $(n|m)$-hook
partition $\la$.
\end{lem}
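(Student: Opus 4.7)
The plan is to derive the lemma as a translation of Kac's classification \cite{K2} of finite dimensional integral-weight irreducible $\osp(2m+1|2n)$-modules into the language of hook partitions, via the map $\la \mapsto \la^\#$. Kac's theorem, specialized to the standard Borel $\mc{B}^{\mathrm{st}}$, states that an integral highest weight $\sum_{i=1}^n\la_i\delta_i+\sum_{j=1}^m\ov{\la}_j\vep_j$ yields a finite dimensional simple module precisely when (a) $\la_1 \ge \cdots \ge \la_n \ge 0$ and (b) $\ov{\la}_1 \ge \cdots \ge \ov{\la}_m \ge 0$ are weakly decreasing sequences in $\Z_+$ (the dominance conditions for the even subalgebra $\mf{sp}(2n) \oplus \mf{so}(2m+1)$), together with (c) the Kac condition at the odd simple root $\delta_n - \vep_1$, namely that whenever $\la_n < m$ one has $\ov{\la}_{\la_n+1} = 0$, equivalently $\la_n \ge \#\{j : \ov{\la}_j > 0\}$.

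Taking (a)--(c) as input, I would then set up a bijection with $(n|m)$-hook partitions directly. From a tuple satisfying (a)--(c), form the conjugate partition $\mu'$ of $\mu := (\ov{\la}_1, \ldots, \ov{\la}_m)$ and define $\la := (\la_1, \ldots, \la_n, \mu'_1, \mu'_2, \ldots)$. Because $\mu'_1 = \#\{j : \ov{\la}_j > 0\}$, condition (c) becomes exactly the inequality $\la_n \ge \la_{n+1}$ needed to glue the two weakly decreasing blocks into a single partition, while $\la_{n+1} = \mu'_1 \le \ell(\mu) \le m$ ensures that $\la$ is an $(n|m)$-hook partition. Conversely, for any $(n|m)$-hook partition $\la$ with tail $\la^+ := (\la_{n+1}, \la_{n+2}, \ldots)$ (a partition with $\ell(\la^+) \le m$), the tuple $\la^\# = (\la_1, \ldots, \la_n, (\la^+)'_1, \ldots, (\la^+)'_m)$ satisfies (a)--(c), since the inequality $\la_n \ge \la_{n+1} = (\la^+)'_1$ conjugates back into (c). These constructions are mutually inverse by involutivity of partition conjugation, completing the lemma.

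The main obstacle is pinning down the precise form of the Kac condition (c) at the odd simple root $\delta_n - \vep_1$. One approach is to invoke Kac's explicit finite-dimensionality criterion from \cite{K2} directly; alternatively, one could apply \lemref{hwt odd} for a single odd reflection at $\delta_n - \vep_1$ to transfer the problem to a nearby Borel, where finite dimensionality over the even part yields (a) and (b) and the shift in highest weight induced by whether $\langle\La, h_\alpha\rangle$ vanishes or not produces exactly the compatibility (c). Once (c) is in place, the remainder is the elementary combinatorial verification sketched above.
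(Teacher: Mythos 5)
The paper itself offers no proof of this lemma: it is stated with a citation to \cite{K2}, and the remark immediately following notes that Kac's criterion in \cite{K2} is phrased in Dynkin labels, with the equivalence to the $(n|m)$-hook-partition labeling delegated to \cite{SW}. Your proposal fills in exactly that translation, and its substance is correct. Your conditions (a)--(c) are indeed the content of Kac's finite-dimensionality criterion for the standard Borel of $\osp(2m+1|2n)$ in the integral case: (a) and (b) are dominance for $\mf{sp}(2n)\oplus\mf{so}(2m+1)$, and (c), $\la_n\geq\#\{j:\ov{\la}_j>0\}$, is the supplementary ``hook'' condition at the odd simple root. The combinatorial bijection is also right: given (a)--(c), appending the conjugate of $(\ov{\la}_1,\ldots,\ov{\la}_m)$ to $(\la_1,\ldots,\la_n)$ produces an $(n|m)$-hook partition precisely because (c) guarantees $\la_n\geq\mu'_1$ and the bound $\mu'_1\leq m$ gives the hook condition for free; involutivity of conjugation then shows $\la\mapsto\la^\#$ inverts this.

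One caution on your fallback argument: the claim that a \emph{single} odd reflection at $\delta_n-\vep_1$ transfers the question ``to a nearby Borel where finite dimensionality over the even part yields (a) and (b)'' is not accurate as stated. Finite dimensionality is a property of the module, not the Borel, and after one odd reflection the even dominance is still only a necessary condition; the supplementary constraint does not vanish. Deriving (c) by odd reflections is indeed possible (this is essentially the route in \cite{SW}), but it requires an iterated sequence of odd reflections tracking how the highest weight shifts, not a one-step reduction. Since you also offer the alternative of quoting Kac's criterion verbatim and then doing the combinatorics, which is exactly what the paper intends by its citation, the proposal as a whole is sound.
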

We denote by $L'(\osp(2m+1|2n),\la^\#)$ these irreducible $\osp(2m+1|2n)$-modules with
respect to the standard Borel subalgebra, to distinguish from earlier notation used for
irreducible modules with respect to different Borel subalgebra. Actually the finite
dimensionality criterion was given in \cite{K2} in terms of Dynkin labels, which is known
to be equivalent to the more natural labeling above in terms of $(n|m)$-hook partitions
(cf.  \cite{SW}). Same remark applies to the finite dimensionality criterion for
$\osp(2m|2n)$ in \lemref{hwt:standard:spo} below.

\begin{example}  \label{diag 9|10}
Suppose that the corresponding Dynkin diagram of a Borel
subalgebra of $\osp(9|10)$ is as follows:
\begin{center}
\hskip -3cm \setlength{\unitlength}{0.16in}
\begin{picture}(24,4)
\put(6,2){\makebox(0,0)[c]{$\bigcirc$}}
\put(8.4,2){\makebox(0,0)[c]{$\bigotimes$}}
\put(10.5,1.95){\makebox(0,0)[c]{$\bigcirc$}}
\put(12.85,2){\makebox(0,0)[c]{$\bigotimes$}}
\put(15.25,2){\makebox(0,0)[c]{$\bigcirc$}}
\put(17.4,2){\makebox(0,0)[c]{$\bigotimes$}}
\put(19.6,1.95){\makebox(0,0)[c]{$\bigcirc$}}
\put(21.9,2){\makebox(0,0)[c]{$\bigotimes$}}
\put(24.2,2){\circle*{0.9}}
%\put(24,.3){\makebox(0,0)[c]{$\bigcirc$}}
%
\put(6.4,2){\line(1,0){1.55}} \put(8.82,2){\line(1,0){1.3}}
\put(10.8,2){\line(1,0){1.6}} \put(13.28,2){\line(1,0){1.45}}
\put(15.7,2){\line(1,0){1.25}} \put(17.8,2){\line(1,0){1.4}}
\put(19.9,2){\line(1,0){1.5}} \put(22.3,1.8){$\Longrightarrow$}
\put(5.8,3){\makebox(0,0)[c]{\tiny $\delta_2+\delta_1$}}
\put(8.4,1){\makebox(0,0)[c]{\tiny $-\delta_1+\vep_1$}}
\put(10.4,3){\makebox(0,0)[c]{\tiny $-\vep_1-\vep_2$}}
\put(12.8,1){\makebox(0,0)[c]{\tiny $\vep_2-\delta_3$}}
\put(15.15,3){\makebox(0,0)[c]{\tiny $\delta_3-\delta_4$}}
\put(17.4,1){\makebox(0,0)[c]{\tiny $\delta_4-\vep_4$}}
\put(19.4,3){\makebox(0,0)[c]{\tiny $\vep_4-\vep_3$}}
\put(21.8,1){\makebox(0,0)[c]{\tiny $\vep_3+\delta_5$}}
\put(24,1){\makebox(0,0)[c]{\tiny $-\delta_5$}}
\end{picture}
\end{center}
We read off from the above a signed sequence
with indices
$\delta_2(-\delta_1)(-\vep_1)\vep_2\delta_3\delta_4\vep_4\vep_3(-\delta_5)$.
In particular, we obtain a sequence
$\delta\delta\vep\vep\delta\delta\vep\vep\delta$ by ignoring the
signs and indices. In this case, $d_1=d_2=2,d_3=1$, and
$e_1=e_2=2$.  Furthermore, the sequences $(\xi_i)_{1\le i \le 5}$
and $(\eta_j)_{1\le j\le 4}$ are $(1,-1,1,1,-1)$ and $(-1,1,1,1)$,
respectively.
\end{example}

Define the {\em block Frobenius coordinates} $(p_i|q_j)$ of an
$(n|m)$-hook partition $\la$ associated to $\mc B$ as follows. For
$1\le i \le n, 1\le j \le m$, let
\begin{eqnarray*}
p_i &= \max \{\la_i - \texttt e_u, 0\}, &\text{ if }\texttt d_u < i
\leq \texttt d_{u+1}  \text{ for some } 0\le u\le r-1
  \\
q_j  &= \max \{\la_j' - \texttt d_{u+1}, 0\},  & \text{ if }   \texttt
e_u+1 < j \leq \texttt e_{u+1}  \text{ for some } 0\le u\le r-1.
\end{eqnarray*}
It is elementary to read off the block Frobenius coordinates of
$\la$ from the Young diagram of $\la$ in general, as illustrated
by the next example.

\begin{example}\label{frob:block}
Consider the $(5,4)$-hook diagram $\la =(14,11,8,8,7,4,3,2).$ The
block Frobenius coordinates associated with $\mc B$ from Example
\ref{diag 9|10} for $\la$ is:
$$p_1=14, p_2=11, p_3=p_4=6, p_5=3;\quad q_1=q_2=6, q_3=3, q_4=2.
$$
These are read off from the Young diagram of $\la$ by following
the $\vep\delta$ sequence $\delta\delta\vep\vep\delta\delta\vep\vep\delta$ as follows:
\begin{center}
\hskip 1cm \setlength{\unitlength}{0.25in}
\begin{picture}(15,9)
\put(0,6){\line(0,1){2}}
\put(0,8){\line(1,0){14}}
\put(14,8){\line(0,-1){1}}
\put(14,7){\line(-1,0){3}}
\put(11,7){\line(0,-1){1}}
\put(11,6){\line(-1,0){11}}
\put(0,6){\line(0,-1){6}}
\put(0,0){\line(1,0){2}}
\put(2,0){\line(0,1){6}}
\put(2,4){\line(1,0){6}}
\put(8,4){\line(0,1){2}}
\put(2,1){\line(1,0){1}}
\put(3,1){\line(0,1){1}}
\put(3,2){\line(1,0){1}}
\put(4,2){\line(0,1){2}}
\put(4,3){\line(1,0){3}} \put(7,3){\line(0,1){1}}
\put(5.5,7.5){\makebox(0,0)[c]{$\leftarrow p_1\rightarrow$}}
\put(5.5,6.5){\makebox(0,0)[c]{$\leftarrow p_2\rightarrow$}}
\put(5.5,5.5){\makebox(0,0)[c]{$\leftarrow p_3\rightarrow$}}
\put(5.5,4.5){\makebox(0,0)[c]{$\leftarrow p_4\rightarrow$}}
\put(5.5,3.5){\makebox(0,0)[c]{$\leftarrow p_5\rightarrow$}}
\put(0.5,3.4){\makebox(0,0)[c]{$\uparrow$}}
\put(0.5,2){\makebox(0,0)[c]{$\downarrow$}}
\put(0.5,2.7){\makebox(0,0)[c]{$q_1$}}
\put(1.5,3.4){\makebox(0,0)[c]{$\uparrow$}}
\put(1.5,2){\makebox(0,0)[c]{$\downarrow$}}
\put(2.5,3.4){\makebox(0,0)[c]{$\uparrow$}}
\put(2.5,2){\makebox(0,0)[c]{$\downarrow$}}
\put(3.5,3.4){\makebox(0,0)[c]{$\uparrow$}}
\put(3.5,2.3){\makebox(0,0)[c]{$\downarrow$}}
\put(1.5,2.7){\makebox(0,0)[c]{$q_2$}}
\put(2.5,2.7){\makebox(0,0)[c]{$q_3$}}
\put(3.5,2.8){\makebox(0,0)[c]{$q_4$}}
\put(4,3){\linethickness{1pt}\line(0,-1){3}}
\put(4,3){\linethickness{1pt}\line(1,0){10}}
\put(-0.1,3){\linethickness{1pt}\line(1,0){0.2}}
\put(4,7.9){\linethickness{1pt}\line(0,1){0.2}}
\put(-1.7,2.8){$n=5$} \put(3.2,8.3){$m=4$}
\end{picture}
\vskip 0.5cm
\end{center}
\end{example}

\begin{thm}    \label{hwt change}
Let $\la$ be an $(n|m)$-hook partition. Let $\mc B$ be a Borel
subalgebra of $\osp(2m+1|2n)$ and retain the above notation. Then,
the $\mc B$-highest weight of the simple $\osp(2m+1|2n)$-module
$L'(\osp(2m+1|2n),\la^\#)$ is $$\la^{\mc B} :=\sum_{i=1}^n \xi_i p_i
\delta_{s(i)} +\sum_{j=1}^m \eta_j q_j \vep_{t(j)}.$$
\end{thm}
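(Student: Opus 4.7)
The plan is to transport the known highest weight at the standard Borel $\mc{B}^{\text{st}}$ (given by Lemma \ref{kac hwt}) through a sequence of odd reflections connecting $\mc{B}^{\text{st}}$ to $\mc{B}$, tracking the weight at each step via Lemma \ref{hwt odd}, and matching the result against the block Frobenius coordinate formula.

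First I would reduce to the ``positive identity'' case where $s=t=\mathrm{id}$ and all $\xi_i=\eta_j=+1$, by conjugating with an appropriate element of the even Weyl group $W_{\bar 0}=(\Z_2^n\rtimes S_n)\times(\Z_2^m\rtimes S_m)$. This group acts by independent signed permutations on the $\delta$- and $\vep$-indices and carries Borels to Borels compatibly with the action on highest weights, so the general formula follows from its specialization to the reduced case. In this setup, the signed sequence of $\mc{B}$ consists of positively-signed $\delta_i$'s and $\vep_j$'s in the order determined by the block pattern $(d_1,e_1,\ldots,d_r,e_r)$, and $\mc{B}$ is connected to $\mc{B}^{\text{st}}$ by a sequence of odd reflections at simple isotropic roots of the form $\delta_i-\vep_j$ or $\vep_j-\delta_i$, each swapping one adjacent $\delta$-$\vep$ pair.

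I would induct on the number of such swaps, with base case $\mc{B}^{\text{st}}$: direct inspection of the definitions gives $(p_i,q_j)=(\la_i,\ov{\la}_j)$ there, matching Lemma \ref{kac hwt}. For the inductive step, let $\mu=\sum p_i''\delta_i+\sum q_j''\vep_j$ denote the highest weight at a current intermediate Borel. Normalizing the invariant form so that $(\delta_i|\delta_k)=-\delta_{ik}$ and $(\vep_j|\vep_\ell)=\delta_{j\ell}$, a direct computation gives $\langle \mu,h_{\delta_i-\vep_j}\rangle\propto -(p_i''+q_j'')$, whence Lemma \ref{hwt odd} yields the post-reflection highest weight as $\mu-\delta_i+\vep_j$ if $p_i''+q_j''>0$ and as $\mu$ otherwise. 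The heart of the argument is verifying that the block Frobenius coordinates associated to the new Borel (where $\texttt{e}_u$ goes up by one and $\texttt{d}_{u+1}$ goes down by one, for the appropriate block index $u$) evolve accordingly: $(p_i''',q_j''')=(\max\{p_i''-1,0\},\,q_j''+1)$ when $p_i''\ge 1$, and $(p_i''',q_j''')=(p_i'',q_j'')$ when $p_i''=0$.

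The main obstacle is the case-by-case combinatorial bookkeeping at the inductive step: one must check that the update rules $p_i=\max\{\la_i-\texttt{e}_u,0\}$ and $q_j=\max\{\la'_j-\texttt{d}_{u+1},0\}$, read off from the Young diagram of $\la$, reproduce the Lemma \ref{hwt odd} dichotomy under a single $\delta\vep$-transposition. The hook partition condition $\la_{n+1}\le m$ is essential here: it is precisely what guarantees that whenever $p_i''=0$ the corresponding $q_j''$ also vanishes for the indices involved in an actual swap, so that the truncation $\max\{\cdot,0\}$ in the definition of block Frobenius coordinates aligns exactly with the vanishing condition $\langle\mu,h_\alpha\rangle=0$ that halts the weight update. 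Once this combinatorial verification is completed, assembling the inductive steps and undoing the initial $W_{\bar 0}$-conjugation produces the asserted formula for $\la^{\mc{B}}$.
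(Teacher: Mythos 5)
Your proposal is correct and follows essentially the same approach as the paper: induct through odd reflections starting from the standard Borel using Lemma~\ref{hwt odd}, with real reflections handling the signs and permutations. One minor correction to your attribution: the implication $p_i''=0\Rightarrow q_j''=0$ for adjacent $\delta_i,\vep_j$ in the Borel sequence follows from partition monotonicity alone rather than the hook condition, since $\la_{\texttt d_{u+1}}\le \texttt e_u$ forces every row from $\texttt d_{u+1}$ onward to have at most $\texttt e_u$ boxes, hence $\la'_{\texttt e_u+1}\le \texttt d_{u+1}-1$.
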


\begin{proof}
Let us consider an odd reflection that changes a Borel subalgebra
$\mc B_1$ to $\mc B_2$. Assume the theorem holds for $\mc B_1$. We
observe by Lemma~\ref{hwt odd} that the statement of the theorem for
$\mc B_2$ follows from the validity of the theorem for $\mc B_1$.
The statement of the theorem is apparently consistent with a
change of Borel subalgebras induced from a real reflection, and
all Borel subalgebras are linked by a sequence of real and odd
reflections. Hence, once we know the theorem holds for one
particular Borel subalgebra, it holds for all. We finally note
that the theorem holds for the standard Borel subalgebra $\mc
B^{\text{st}}$, which corresponds to the sequence of $n$
$\delta$'s followed by $m$ $\vep$'s with all signs $\xi_i$ and
$\eta_j$ being positive, i.e., $\la^{\mc B^{\text{st}}} =\la^\#$.
\end{proof}

\begin{example}
With respect to the Borel $\mc B$ of $\osp(9|10)$ as in
Example~\ref{diag 9|10}, the $\mc B$-extremal weight of $L'(\osp(9|10),\la^\#)$
for $\la$ as in Example~\ref{frob:block} equals to
\begin{align*}
-11\delta_1+14\delta_2+6\delta_3+6\delta_4-3\delta_5
-6\vep_1+6\vep_2+2\vep_3+3\vep_4.
\end{align*}
\end{example}

\begin{cor}
Every finite dimensional irreducible $\osp(2m+1|2n)$-module of integral highest weight is
self-contragradient.
\end{cor}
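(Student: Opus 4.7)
The plan is to apply Theorem \ref{hwt change} to the Borel subalgebra $\mc B^{-}$ opposite to the standard Borel $\mc B^{\text{st}}$ and to show that the resulting extremal weight of $L'(\osp(2m+1|2n),\la^\#)$ is precisely $-\la^\#$. This is the right thing to compute: a finite dimensional simple module is self-contragredient if and only if its lowest weight equals the negative of its highest weight, and the $\mc B^{\text{st}}$-lowest weight of a finite dimensional simple module coincides with its $\mc B^{-}$-highest weight.

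The key step is to realize $\mc B^{-}$ in the combinatorial language of Section \ref{sec:type B}. Taking the negatives of the simple roots of $\mc B^{\text{st}}$ produces the simple roots $\delta_{i+1}-\delta_i$ $(1 \le i \le n-1)$, the odd isotropic root $\vep_1-\delta_n$, the roots $\vep_{j+1}-\vep_j$ $(1 \le j \le m-1)$, and the short even root $-\vep_m$ at the opposite end from the type $A$ chain. Reading from the type $A$ end, this Dynkin diagram fits the required format with token sequence $(-\delta_1,\ldots,-\delta_n,-\vep_1,\ldots,-\vep_m)$. Hence $r=1$, $\texttt d_1=n$, $\texttt e_1=m$, the permutations $s$ and $t$ are the identity, and all signs $\xi_i=\eta_j=-1$.

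Because the block structure $(r,\texttt d_u,\texttt e_u)$ of $\mc B^{-}$ coincides with that of $\mc B^{\text{st}}$, the block Frobenius coordinates of $\la$ attached to $\mc B^{-}$ agree with those for $\mc B^{\text{st}}$, namely $p_i=\la_i$ and $q_j=\nu_j$, where $\nu=(\nu_1,\ldots,\nu_m)$ is the conjugate of the tail $(\la_{n+1},\la_{n+2},\ldots)$. Theorem \ref{hwt change} then delivers
\[
\la^{\mc B^{-}} \;=\; \sum_{i=1}^{n}(-1)\la_i\delta_i + \sum_{j=1}^{m}(-1)\nu_j\vep_j \;=\; -\la^{\#},
\]
from which self-contragredience follows at once. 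The only nontrivial point is the verification that $\mc B^{-}$ admits the canonical description of Section \ref{sec:type B}; once this combinatorial identification is confirmed, the sign flip propagates directly through the formula of Theorem \ref{hwt change} without any further computation.
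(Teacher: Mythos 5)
Your argument is essentially identical to the paper's proof of this corollary: the paper likewise applies Theorem~\ref{hwt change} to the Borel opposite to $\mc B^{\text{st}}$ and observes that the resulting extremal weight is $-\la^\#$. You have simply spelled out the combinatorial verification (identity permutations, all signs $-1$, unchanged block structure) that the paper leaves implicit, and that verification is correct.
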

\begin{proof}
Denote by $\mc B^{\text{op}}$ the opposite Borel to the standard
one $\mc B^{\text{st}}$. It follows by Theorem~\ref{hwt change}
that the $\mc B^{\text{op}}$-extremal weight of the module
$L'(\osp(2m+1|2n) ,\la^\#)$ is $-\la^\#$.
\end{proof}

Recall that the following Dynkin diagram of $\osp(2m+1|2n)$ and of its
(trivial) central extension $\SG^\mf{b}_n$ has been in use from
the point of view of super duality and it is opposite to the one
associated to the standard Borel $\mc B^{\text{st}}$.
 \begin{center}
\hskip -3cm \setlength{\unitlength}{0.16in}
\begin{picture}(24,4)
\put(5.6,2){\makebox(0,0)[c]{$\bigcirc$}}
\put(8,2){\makebox(0,0)[c]{$\bigcirc$}}
\put(10.4,2){\makebox(0,0)[c]{$\bigcirc$}}
\put(14.85,2){\makebox(0,0)[c]{$\bigotimes$}}
\put(17.25,2){\makebox(0,0)[c]{$\bigcirc$}}
\put(19.4,2){\makebox(0,0)[c]{$\bigcirc$}}
\put(23.5,2){\makebox(0,0)[c]{$\bigcirc$}}
\put(8.35,2){\line(1,0){1.5}} \put(10.82,2){\line(1,0){0.8}}
\put(13.2,2){\line(1,0){1.2}} \put(15.28,2){\line(1,0){1.45}}
\put(17.7,2){\line(1,0){1.25}} \put(19.81,2){\line(1,0){0.9}}
\put(22,2){\line(1,0){1}}
\put(6.8,2){\makebox(0,0)[c]{$\Longleftarrow$}}
\put(12.5,1.95){\makebox(0,0)[c]{$\cdots$}}
\put(21.5,1.95){\makebox(0,0)[c]{$\cdots$}}
\put(5.4,1){\makebox(0,0)[c]{\tiny $-\epsilon_{-m}$}}
\put(7.8,1){\makebox(0,0)[c]{\tiny $\alpha_{-m}$}}
\put(10.4,1){\makebox(0,0)[c]{\tiny $\alpha_{-m+1}$}}
\put(14.7,1){\makebox(0,0)[c]{\tiny $\alpha_{-1}$}}
\put(17.15,1){\makebox(0,0)[c]{\tiny $\beta_{1/2}$}}
\put(19.5,1){\makebox(0,0)[c]{\tiny $\beta_{3/2}$}}
\put(23.5,1){\makebox(0,0)[c]{\tiny $\beta_{n-3/2}$}}
\put(0,1.2){{\ovalBox(1.8,1.4){${\SG}^\mf{b}_n$}}}
\end{picture}
\end{center}
Setting $\vep_{j} =\ep_{-m+j-1}$ and $\delta_i
=\ep_{n-i+1/2}$ to match the notation in this section with the one
used earlier, we have the following immediate corollary of
\lemref{kac hwt} and \thmref{hwt change}.

\begin{cor}\label{aux:finite1}
An irreducible integral highest weight $\osp(2m+1|2n)$-module with respect to the Borel
subalgebra corresponding to
\makebox(22,0){$\oval(22,15)$}\makebox(-22,8){${\SG}_n^\mf{b}$} is finite dimensional if
and only if the highest weight is of the form
\begin{equation}\label{B-type:finite:hw2}
-\sum_{j=1}^{m}\max\{\la'_{j}-n, 0\} \, \epsilon_{-j}
-\sum_{i=1}^n\la_{n-i+1}\epsilon_{i-1/2},
\end{equation}
where $\la=(\la_1,\la_2,\ldots)$ is an $(n|m)$-hook partition.
\end{cor}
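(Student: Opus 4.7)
The plan is to combine the finite-dimensionality criterion of \lemref{kac hwt} with the change-of-Borel formula in \thmref{hwt change}. Under the identifications $\vep_j=\ep_{-m+j-1}$ and $\delta_i=\ep_{n-i+1/2}$, \lemref{kac hwt} says the irreducible integral highest weight module $L'(\osp(2m+1|2n),\la^\#)$ is finite dimensional precisely when $\la$ is an $(n|m)$-hook partition. Since changing Borel subalgebras is a bijective operation on extremal weights, it suffices to compute the extremal weight of this module with respect to the Borel subalgebra prescribed by the diagram $\SG_n^\mf{b}$, and verify that it equals the expression in \eqnref{B-type:finite:hw2}.

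The main concrete step is to read off the signed sequence attached to this Borel. Starting from the type-$A$ end of $\SG_n^\mf{b}$ (the $\beta_{n-3/2}$ vertex) and walking toward the short-root end, the simple roots encountered in order are
$$\beta_{n-3/2}=\delta_2-\delta_1,\ \beta_{n-5/2}=\delta_3-\delta_2,\ \ldots,\ \beta_{1/2}=\delta_n-\delta_{n-1},\ \alpha_\times=\vep_m-\delta_n,\ \alpha_{-2}=\vep_{m-1}-\vep_m,\ \ldots,\ \alpha_{-m}=\vep_1-\vep_2,$$
followed by the short simple root $-\ep_{-m}=-\vep_1$. Solving the recursion $a_i-a_{i+1}$ = the $i$-th simple root, with $a_{n+m}=-\vep_1$ fixed by the terminal rank-one root, forces the signed sequence to be $-\delta_1,-\delta_2,\ldots,-\delta_n,-\vep_m,-\vep_{m-1},\ldots,-\vep_1$. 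In the notation of \thmref{hwt change} this means $r=1$, $d_1=n$, $e_1=m$, all signs $\xi_i=\eta_j=-1$, $s(i)=i$, and $t(j)=m-j+1$.

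With this data the block Frobenius coordinates collapse: for $1\le i\le n$ one has $p_i=\max\{\la_i-0,\,0\}=\la_i$ (using $u=0$ with $\texttt{e}_0=0$), and for $1\le j\le m$ one has $q_j=\max\{\la'_j-n,\,0\}$ (using $u=0$ with $\texttt{d}_1=n$). \thmref{hwt change} then gives the extremal weight
$$\la^{\SG_n^\mf{b}} \;=\; -\sum_{i=1}^{n}\la_i\,\delta_i \;-\; \sum_{j=1}^{m}\max\{\la'_j-n,\,0\}\,\vep_{m-j+1}.$$
Converting back via $\delta_i=\ep_{n-i+1/2}$ (setting $k=n-i+1$, so that $\la_i=\la_{n-k+1}$ and $\delta_i=\ep_{k-1/2}$) and $\vep_{m-j+1}=\ep_{-j}$ turns this into exactly the expression \eqnref{B-type:finite:hw2}.

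The argument is essentially bookkeeping and has no conceptual obstacle, as all the heavy lifting has already been done in \thmref{hwt change}. The one place requiring care is the determination of the signed sequence: the global minus sign ultimately arises because the short simple root at the non-type-$A$ end of the diagram is $-\vep_1$ rather than $+\vep_1$, while the reversal of the $\delta$-order reflects the structure of the tail diagram $\ov{\mf{T}}_n$ relative to the standard Borel. Once these signs are tracked correctly, the equivalence asserted by the corollary is immediate.
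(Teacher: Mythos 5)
Your proposal is correct and follows exactly the route the paper intends: the paper labels \corref{aux:finite1} an ``immediate corollary'' of \lemref{kac hwt} and \thmref{hwt change} under the stated change of coordinates, without supplying details. Your computation of the signed $\delta\vep$-sequence for the $\SG_n^{\mf b}$-Borel (uniformly negative signs, $r=1$, $d_1=n$, $e_1=m$, $s=\mathrm{id}$, $t(j)=m-j+1$), the resulting block Frobenius coordinates $p_i=\la_i$ and $q_j=\max\{\la'_j-n,0\}$, and the translation back to $\ep$-coordinates all check out, so the bookkeeping you supply is precisely the content the paper suppresses.
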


\subsection{Extremal weights for $\osp(2m|2n)$}

Let us denote the weights of the natural $\osp(2m|2n)$-module
$\C^{2n|2m}$ by $\pm \delta_i, \pm \vep_j$ for $1\le i\le n, 1\le
j \le m$. The {\em standard} Borel subalgebra $\mc B^{\text{st}}$
of $\osp(2m|2n)$ is the one associated to the following set of
simple roots
\begin{center}
\hskip -3cm \setlength{\unitlength}{0.16in}
\begin{picture}(24,4.5)
\put(6,2){\makebox(0,0)[c]{$\bigcirc$}}
\put(8.4,2){\makebox(0,0)[c]{$\bigcirc$}}
\put(12.85,2){\makebox(0,0)[c]{$\bigcirc$}}
\put(15.25,2){\makebox(0,0)[c]{$\bigotimes$}}
\put(17.4,2){\makebox(0,0)[c]{$\bigcirc$}}
\put(22,2){\makebox(0,0)[c]{$\bigcirc$}}
\put(24,3.8){\makebox(0,0)[c]{$\bigcirc$}}
\put(24,.3){\makebox(0,0)[c]{$\bigcirc$}}
\put(6.4,2){\line(1,0){1.55}} \put(8.82,2){\line(1,0){0.8}}
\put(11.2,2){\line(1,0){1.2}} \put(13.28,2){\line(1,0){1.45}}
\put(15.7,2){\line(1,0){1.25}} \put(17.8,2){\line(1,0){0.9}}
\put(20.1,2){\line(1,0){1.4}} \put(22.4,2){\line(1,1){1.4}}
\put(22.4,2){\line(1,-1){1.4}}
\put(10.5,1.95){\makebox(0,0)[c]{$\cdots$}}
\put(19.6,1.95){\makebox(0,0)[c]{$\cdots$}}
\put(6.1,1){\makebox(0,0)[c]{\tiny $\delta_1-\delta_2$}}
\put(8.9,1){\makebox(0,0)[c]{\tiny $\delta_2-\delta_3$}}
\put(12.8,1){\makebox(0,0)[c]{\tiny $\delta_{n-1}-\delta_{n}$}}
\put(15.15,3){\makebox(0,0)[c]{\tiny $\delta_n-\vep_1$}}
\put(17.4,1){\makebox(0,0)[c]{\tiny $\vep_1-\vep_2$}}
\put(21.3,1){\makebox(0,0)[c]{\tiny $\vep_{m-2}-\vep_{m-1}$}}
\put(26.5,3.8){\makebox(0,0)[c]{\tiny $\vep_{m-1}-\vep_{m}$}}
\put(26.5,0.3){\makebox(0,0)[c]{\tiny $\vep_{m-1}+\vep_{m}$}}
\end{picture}
\end{center}

There are two kinds of Dynkin diagrams and corresponding Borel subalgebras
for $\osp(2m|2n)$:
\begin{enumerate}
\item[(i)] Diagrams of $|$-shape, i.e., Dynkin diagrams with a
long simple root $\pm 2\delta_i$.

\item[(ii)] Diagrams of {\Large $\Ydown$}-shape, i.e., Dynkin
diagrams with no long simple root.
\end{enumerate}

We will follow the notation for $\osp(2m+1|2n)$ in
Subsection~\ref{sec:type B} for sets of simple roots in terms of
signed $\vep\delta$ sequences, so we have permutations $s, t$, and
signs $\xi_i, \eta_j$. We fix an ambiguity on the choice of the
sign $\eta_m$ associated to a Borel $\mc B$ of {\Large
$\Ydown$}-shape, by demanding the total number of negative signs
among $\eta_j (1\le j \le m)$ to be always even.

Let $\la$ be an $(n|m)$-hook partition, and let the block
Frobenius coordinates $(p_i|q_j)$ be as defined in
Subsection~\ref{sec:type B}. Introduce the following weights:
\begin{eqnarray*}
\la^{\mc B} &:=& \sum_{i=1}^n \xi_i p_i \delta_{s(i)}
+\sum_{j=1}^m \eta_j q_j \vep_{t(j)},
 \\
\la^{\mc B}_- &:=& \sum_{i=1}^n \xi_i p_i \delta_{s(i)}
+\sum_{j=1}^{m-1} \eta_j q_j \vep_{t(j)} -\eta_m q_m \vep_{t(m)}.
\end{eqnarray*}
The weight $\la^{\mc B}_-$ will only be used for Borel $\mc B$ of
{\Large $\Ydown$}-shape. Note that $\la^{\mc B^{\text{st}}}
=\la^\#$ and we shall denote $\la^{\#}_- :=\la^{\mc
B^{\text{st}}}_-$.

Given a Borel $\mc B$ of $|$-shape, we define $\text{s}(\mc B)$ to
be the sign of $\prod_{j=1}^m \eta_j$.

\begin{lem}\label{hwt:standard:spo} \cite{K2}
The irreducible $\osp(2m|2n)$-module of integral highest weight of the form
$\sum_{i=1}^n\mu_i\delta_i + \sum_{j=1}^m\ov{\mu}_j\vep_j$ with respect to the standard
Borel subalgebra is finite dimensional if and only if
$(\mu_1,\ldots,\mu_n,\ov{\mu}_1,\ldots,\ov{\mu}_m)$ is either $\la^\#$ or $\la^{\#}_-$
for some $(n|m)$-hook partition $\la$.
\end{lem}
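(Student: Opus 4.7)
The plan is to reduce the statement to Kac's finite-dimensionality criterion \cite{K2} for basic classical Lie superalgebras and then translate the resulting Dynkin-label constraints into the hook-partition language, in direct analogy with \lemref{kac hwt} for the $B$-type case. The only genuinely new feature compared with $\osp(2m+1|2n)$ is the $D$-type fork at the $\vep$-end of the standard Dynkin diagram, and this is what produces the doubling of families $\la^\#$ versus $\la^\#_-$.

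First, I would extract the even-dominance conditions by reading off the Dynkin labels of $\la = \sum_i \mu_i \delta_i + \sum_j \ov{\mu}_j \vep_j$ at the even simple roots of the standard Borel. The type-$A$ sub-diagram on the $\delta$-side yields $\mu_1 \geq \mu_2 \geq \cdots \geq \mu_n$ with all consecutive differences in $\Z_+$. The type-$D$ fork on the $\vep$-side yields both $\ov{\mu}_{m-1} - \ov{\mu}_m \in \Z_+$ and $\ov{\mu}_{m-1} + \ov{\mu}_m \in \Z_+$, hence $\ov{\mu}_1 \geq \ov{\mu}_2 \geq \cdots \geq \ov{\mu}_{m-1} \geq |\ov{\mu}_m|$ with integer differences. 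The ambiguity in the sign of $\ov{\mu}_m$ is the source of the two parametrizations: $\ov{\mu}_m \geq 0$ will correspond to $\la^\#$, while $\ov{\mu}_m \leq 0$ corresponds to $\la^\#_-$.

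Second, I would invoke Kac's criterion at the unique odd isotropic simple root $\delta_n - \vep_1$. For typical weights, even-dominance already suffices for finite-dimensionality; for atypical integral weights, the additional integrability condition along the odd root translates into a bound tying $\mu_n$ to the length of the $\vep$-column. A direct Young-diagram manipulation, of the type recorded in \cite{SW} and paralleling the proof of \lemref{kac hwt}, then repackages the joint conditions on $(\mu_1,\ldots,\mu_n)$ and $(|\ov{\mu}_1|,\ldots,|\ov{\mu}_m|)$ as the requirement that $(\mu_1,\ldots,\mu_n,|\ov{\mu}_1|,\ldots,|\ov{\mu}_m|)$ equal $\la^\#$ for some $(n|m)$-hook partition $\la$. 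Combining with the sign analysis above yields both $\la^\#$ and $\la^\#_-$ as the complete list.

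The main obstacle, and the only substantive difference from the $B$-type argument, is verifying that the two sign choices at the fork really do produce distinct finite-dimensional irreducibles when $\ov{\mu}_m \neq 0$, rather than isomorphic modules under some hidden identification, and that no compatibility between the fork ambiguity and the odd-root atypicality condition is accidentally lost. This is most cleanly settled by composing with the outer automorphism of $\osp(2m|2n)$ that exchanges the two tail nodes of the $D$-fork, which is seen to implement precisely the swap $\la^\# \leftrightarrow \la^\#_-$, thereby both establishing non-isomorphism and confirming consistency with Kac's odd-root condition.
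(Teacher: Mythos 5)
Your plan matches what the paper actually does here: the lemma is stated as a citation to \cite{K2} (Kac's finite-dimensionality criterion in Dynkin-label form) together with a pointer to \cite{SW} for the equivalence with the $(n|m)$-hook-partition parametrization, exactly the two steps you outline, and the $D$-fork sign analysis and the outer automorphism of $\osp(2m|2n)$ that swaps $\la^\# \leftrightarrow \la^\#_-$ are the right way to organize the extra freedom in $\ov{\mu}_m$. One factual caution in your middle paragraph: for type-II superalgebras such as $\osp$, even-dominance does \emph{not} by itself yield finite-dimensionality of $L(\la)$ for typical $\la$; Kac's supplementary condition at the odd node is imposed unconditionally, not only for atypical weights (it is precisely this condition, in all cases, that forces the $(n|m)$-hook shape), so when you flesh this out you should apply the odd-root condition uniformly rather than splitting by typicality.
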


We shall denote these irreducible $\osp(2m|2n)$-modules
with respect to the standard Borel by
$L'(\osp(2m|2n),\la^\#)$ and
$L'(\osp(2m|2n),\la^\#_-)$. By a similar argument as for
Theorem~\ref{hwt change}, we establish the following.
\begin{thm}\label{hwt change 2}
Let $\la$ be an $(n|m)$-hook partition.
\begin{enumerate}
\item Assume $\mc B$ is of {\Large $\Ydown$}-shape. Then,
\begin{itemize}
\item[(i)] $\la^{\mc B}$ is the $\mc B$-extremal weight for the
module $L'(\osp(2m|2n), \la^\#)$.

\item[(ii)] $\la^{\mc B}_-$ is of the $\mc B$-extremal weight for the
module $L'(\osp(2m|2n), \la^{\#}_-)$.
\end{itemize}

\item Assume $\mc B$ is of $|$-shape. Then,
\begin{itemize}
\item[(i)] $\la^{\mc B}$ is the $\mc B$-extremal weight for
$L'(\osp(2m|2n), \la^\#)$ if $\text{s}(\mc B) =+$.

\item[(ii)] $\la^{\mc B}$ is the $\mc B$-extremal weight for
$L'(\osp(2m|2n), \la^{\#}_-)$ if $\text{s}(\mc B) =-$.
\end{itemize}
\end{enumerate}
\end{thm}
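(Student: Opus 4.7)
The plan is to mirror the proof of Theorem~\ref{hwt change}: verify the claim on the standard Borel $\mc B^{\text{st}}$ and then propagate it through sequences of real and odd reflections, which together connect all Borel subalgebras of $\osp(2m|2n)$. The added subtlety is that there are now two inequivalent finite-dimensional irreducible modules $L'(\osp(2m|2n),\la^\#)$ and $L'(\osp(2m|2n),\la^\#_-)$, and two classes of Borel subalgebras ($|$-shape and fork-shape), so the bookkeeping is richer and one has to track several mutually supporting statements at once.

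For the base case, I observe that $\mc B^{\text{st}}$ is of fork-shape with all $\xi_i,\eta_j$ equal to $+$ and with $s,t$ the identity permutations, so $\la^{\mc B^{\text{st}}}=\la^\#$ and $\la^{\mc B^{\text{st}}}_-=\la^\#_-$; part~(1) is then immediate from Lemma~\ref{hwt:standard:spo}. For the inductive steps, I would first show invariance under real reflections in the even Weyl group $W(D_m)\times W(C_n)$: the $C_n$ factor flips individual $\xi_i$'s and permutes the $\delta$-indices, while the $D_m$ factor permutes the $\vep$-indices and flips $\eta_j$'s only in pairs. Both actions preserve the parity convention on $\eta_m$ for fork-shape Borels, preserve $\text{s}(\mc B)$ for $|$-shape Borels, and relabel the block Frobenius coordinates $(p_i|q_j)$ consistently via $s,t$, so the statement is stable under real reflections.

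For odd reflections (corresponding to swapping an adjacent $\delta\vep$-pair in the signed sequence), Lemma~\ref{hwt odd} tells us that the new highest weight equals the old one or the old one shifted by $-\alpha$, according to whether $\langle\la^{\mc B},h_\alpha\rangle$ vanishes. When the swap is internal, the block structure $(d_u,e_u)$ changes only locally in two adjacent blocks, and I would verify by direct inspection of the Young diagram of $\la$ that the resulting modification of $(p_i|q_j)$ matches exactly the shift prescribed by Lemma~\ref{hwt odd}, in parallel with the argument for Theorem~\ref{hwt change}. When the swap lies at the tail, the shape of the Dynkin diagram toggles between fork-shape and $|$-shape, and an extra consistency check is needed: the assignment of $\la^{\mc B}$ or $\la^{\mc B}_-$ to $L'(\la^\#)$ or $L'(\la^\#_-)$, recorded by the parity of $\eta_m$ on the fork side and by $\text{s}(\mc B)$ on the $|$ side, must toggle consistently.

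The hard part will be this shape-changing tail reflection. My approach is to exploit the outer automorphism $\sigma$ of $\osp(2m|2n)$ that sends $\vep_m\mapsto -\vep_m$: $\sigma$ interchanges $L'(\la^\#)$ and $L'(\la^\#_-)$; it preserves $\mc B^{\text{st}}$ and the fork-shape class (flipping $\eta_m$ compatibly with the parity convention); and on $|$-shape Borels it toggles $\text{s}(\mc B)$. This equivariance, combined with the invariance already established for real reflections and internal odd reflections, reduces the shape-changing case to a single representative tail-swap, which can then be handled by a direct computation using Lemma~\ref{hwt odd} and the definitions of $\la^{\mc B}, \la^{\mc B}_-$. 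Putting these pieces together with the base case yields the theorem for every Borel subalgebra.
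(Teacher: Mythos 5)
Your proposal follows the same skeleton as the paper's: the paper states only that Theorem~\ref{hwt change 2} is proved ``by a similar argument as for Theorem~\ref{hwt change},'' i.e., verify the claim at the standard Borel (Lemma~\ref{hwt:standard:spo}), then propagate through real and odd reflections via Lemma~\ref{hwt odd}, and that is exactly your scaffolding. Where you diverge is in the organization of the odd-reflection step: you split it into ``internal'' swaps (which preserve the shape of the diagram and are handled word-for-word as in Theorem~\ref{hwt change}) and ``shape-changing'' tail swaps (fork-shape $\leftrightarrow$ $|$-shape), and for the latter you propose to exploit the diagram automorphism $\sigma:\vep_m\mapsto-\vep_m$ to cut the case analysis. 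The paper's (implicit) argument does not invoke $\sigma$; it just carries the direct Lemma~\ref{hwt odd} check through the shape-changing swap, together with the bookkeeping of the parity convention on $\eta_m$ (fork side) and $\text{s}(\mc B)$ ($|$-side). Your $\sigma$-equivariance observations are correct — $\sigma$ preserves $\mc B^{\text{st}}$, swaps $L'(\la^\#)\leftrightarrow L'(\la^\#_-)$, preserves the shape class, and toggles $\text{s}(\mc B)$ — and they do halve the verification. However, the phrase ``reduces the shape-changing case to a single representative tail-swap'' needs more justification: $\sigma$ together with real and internal odd reflections does not obviously act transitively on all shape-changing reflection instances (the fork can sit anywhere in the signed $\vep\delta$-sequence, not only at the $\vep_m$-slot), so you would still need the direct Lemma~\ref{hwt odd} computation for a family of tail configurations, not just one. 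That said, this is a question of efficiency, not correctness: if you instead simply carry out the direct check for every shape-changing swap, tracking the parity of negative $\eta_j$'s (which odd reflections preserve, since they swap entries without changing signs) and the sign $\text{s}(\mc B)$ (which $W(D_m)\times W(C_n)$ preserves), you reproduce what the paper intends. So the approach is sound; just be careful not to overclaim the scope of the $\sigma$-reduction.
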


\begin{cor}
For $m$ even, every finite dimensional irreducible $\osp(2m|2n)$-module of integral
highest weight is self-contragradient.
\end{cor}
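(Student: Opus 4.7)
The plan is to invoke the standard criterion that a finite dimensional simple module $L$ with $\mc{B}$-highest weight $\mu$ is self-contragredient precisely when its extremal weight with respect to the opposite Borel $\mc{B}^{\text{op}}$ equals $-\mu$. Applied to $\mc{B} = \mc{B}^{\text{st}}$, it suffices to show that the $\mc{B}^{\text{op}}$-extremal weights of $L'(\osp(2m|2n), \la^\#)$ and $L'(\osp(2m|2n), \la^\#_-)$ are $-\la^\#$ and $-\la^\#_-$, respectively; the result then follows from \lemref{hwt:standard:spo}, which lists the finite dimensional integral irreducibles.

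First I would translate $\mc{B}^{\text{op}}$ into the signed $\vep\delta$ data used by \thmref{hwt change 2}. Since $\mc{B}^{\text{st}}$ corresponds to the sequence $\delta_1\ldots\delta_n\vep_1\ldots\vep_m$ with trivial permutations $s = t = \mathrm{id}$ and all signs $\xi_i = \eta_j = +1$, its opposite has the same sequence and the same permutations, but with every sign flipped to $-1$. Flipping signs preserves the unsigned diagram shape, so $\mc{B}^{\text{op}}$ is again of {\Large $\Ydown$}-shape, and case (1) of \thmref{hwt change 2} applies. Substituting $\xi_i = \eta_j = -1$ into the formulas for $\la^{\mc{B}^{\text{op}}}$ and $\la^{\mc{B}^{\text{op}}}_-$ then produces $-\la^\#$ and $-\la^\#_-$ by inspection.

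The only subtle point, and the one where the hypothesis that $m$ is even genuinely enters, is the parity convention fixed above for {\Large $\Ydown$}-shape Borels: the total number of negative $\eta_j$'s must be even. With all $\eta_j = -1$ there are exactly $m$ negative signs, so the convention is automatically satisfied precisely when $m$ is even, allowing the signed data above to be fed directly into \thmref{hwt change 2}(1). For odd $m$ one would instead be forced to flip $\eta_m$ back to $+1$; the analogous computation then shows that $L'(\osp(2m|2n), \la^\#)$ and $L'(\osp(2m|2n), \la^\#_-)$ are swapped under contragredient duality rather than each being fixed, which is consistent with the exclusion of odd $m$ from the statement.
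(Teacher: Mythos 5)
Your proof is correct and follows the same strategy the paper uses for the analogous corollary in the $\osp(2m+1|2n)$ case: compute the $\mc B^{\text{op}}$-extremal weight via the extremal weight theorem and observe it equals $-\la^\#$ (resp. $-\la^\#_-$). You have also correctly located where the hypothesis that $m$ is even enters — it is exactly the parity convention on the $\eta_j$'s for a {\Large $\Ydown$}-shape Borel, which is satisfied by $\eta_j \equiv -1$ precisely when $m$ is even; for odd $m$ the two types of modules get swapped by duality, matching the exclusion of that case.
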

\begin{rem}
The remaining $\mc B$-extremal weights for the modules $L'(\osp(2m|2n),
\la^\#)$ when $\text{s}(\mc B) =-$ or for the modules
$L'(\osp(2m|2n), \la^\#_-)$ when $\text{s}(\mc B) =+$ are
rather complicated and do not seem to afford a uniform simple
answer.
\end{rem}

The following Dynkin diagram of $\osp(2m|2n)$ or
$\SG_n^\dd$ that has been in use for super duality
is opposite to the standard Borel $\mc B^{\text{st}}$.
\begin{center}
\hskip -3cm \setlength{\unitlength}{0.16in}
\begin{picture}(24,5)
\put(6,0.3){\makebox(0,0)[c]{$\bigcirc$}}
\put(6,3.8){\makebox(0,0)[c]{$\bigcirc$}}
\put(8.4,2){\makebox(0,0)[c]{$\bigcirc$}}
\put(12.85,2){\makebox(0,0)[c]{$\bigcirc$}}
\put(15.25,2){\makebox(0,0)[c]{$\bigotimes$}}
\put(17.4,2){\makebox(0,0)[c]{$\bigcirc$}}
\put(21.9,2){\makebox(0,0)[c]{$\bigcirc$}}
\put(6.4,0.4){\line(1,1){1.6}}
\put(6.4,3.7){\line(1,-1){1.6}}
\put(8.82,2){\line(1,0){0.8}}
\put(11.2,2){\line(1,0){1.2}} \put(13.28,2){\line(1,0){1.45}}
\put(15.7,2){\line(1,0){1.25}} \put(17.8,2){\line(1,0){0.9}}
\put(20.1,2){\line(1,0){1.4}}
%\put(22.4,2){\line(1,0){1.4}}
%
\put(10.5,1.95){\makebox(0,0)[c]{$\cdots$}}
\put(19.6,1.95){\makebox(0,0)[c]{$\cdots$}}
%\put(24.6,1.95){\makebox(0,0)[c]{$\cdots$}}
%
\put(3,0.3){\makebox(0,0)[c]{\tiny $-\epsilon_{-m}-\epsilon_{-m+1}$}}
\put(4.5,3.8){\makebox(0,0)[c]{\tiny $\alpha_{-m}$}}
\put(8.9,1){\makebox(0,0)[c]{\tiny $\alpha_{-m+1}$}}
\put(12.8,1){\makebox(0,0)[c]{\tiny $\alpha_{-2}$}}
\put(15.15,1){\makebox(0,0)[c]{\tiny $\alpha_{-1}$}}
\put(17.8,1){\makebox(0,0)[c]{\tiny $\beta_{1/2}$}}
\put(22,1){\makebox(0,0)[c]{\tiny $\beta_{n-3/2}$}}
\put(0,1.2){{\ovalBox(1.8,1.4){$\SG_n^\dd$}}}
\end{picture}
\end{center}

Setting
$\vep_{j} =\ep_{-m+j-1}$ and $\delta_i =\ep_{n-i+1/2}$ to match notations, we record the following
corollary of \lemref{hwt:standard:spo} and \thmref{hwt change 2}.

\begin{cor}\label{aux:finite2}
An irreducible integral highest weight $\osp(2m|2n)$-module with respect to the Borel
subalgebra corresponding to \makebox(24,0){$\oval(24,14)$}\makebox(-24,8){${\SG}_n^\dd$}
is finite dimensional if and only if the highest weight is of the form
\begin{equation}\label{B-type:finite:hw1}
\pm\max \{\la'_{m}-n, 0\} \,\epsilon_{-m}
-\sum_{j=1}^{m-1}\max\{\la'_{j}-n, 0\} \,\epsilon_{-j}
-\sum_{i=1}^n\la_{n-i+1}\epsilon_{i-1/2},
\end{equation}
where $\la=(\la_1,\la_2,\ldots)$ is an $(n|m)$-hook partition.
\end{cor}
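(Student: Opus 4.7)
The plan is to reduce this to a direct application of Theorem~\ref{hwt change 2} once the Borel subalgebra $\mc B$ of $\osp(2m|2n)$ associated to the displayed Dynkin diagram of type $\SG_n^\dd$ is expressed in the language of signed $\vep\delta$-sequences from the previous subsection. Under the identifications $\vep_j = \ep_{-m+j-1}$ and $\delta_i = \ep_{n-i+1/2}$, the simple roots of $\mc B$ become the $\dd$-fork $\{\vep_1 - \vep_2,\,-\vep_1 - \vep_2\}$ together with the chain $\vep_2 - \vep_3,\ldots,\vep_{m-1}-\vep_m,\;\vep_m - \delta_n,\;\delta_n - \delta_{n-1},\ldots,\delta_2 - \delta_1$, so $\mc B$ is of $\Ydown$-shape, with its type $A$ end at $\delta_1$.

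First I would read off the signed $\vep\delta$-sequence from the type $A$ end of $\mc B$, obtaining $-\delta_1,-\delta_2,\ldots,-\delta_n,\,-\vep_m,-\vep_{m-1},\ldots,-\vep_2,\,\pm\vep_1$, where the last entry is the fork position carrying the ambiguous sign $\eta_m\in\{\pm 1\}$ subject to the parity convention. In the notation of Section~\ref{sec:type B} this amounts to $r=1$, $d_1=n$, $e_1=m$, permutations $s(i)=i$ and $t(j)=m-j+1$, and signs $\xi_i=-1$ for all $i$ and $\eta_j=-1$ for $j<m$. The block Frobenius coordinates then collapse to $p_i=\la_i$ for $1\le i\le n$ and $q_j=\max\{\la'_j-n,\,0\}$ for $1\le j\le m$.

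Next, by Lemma~\ref{hwt:standard:spo}, the finite dimensional irreducible $\osp(2m|2n)$-modules of integral highest weight with respect to $\mc B^{\mathrm{st}}$ are precisely $L'(\osp(2m|2n),\la^\#)$ and $L'(\osp(2m|2n),\la^\#_-)$, with $\la$ an $(n|m)$-hook partition. Applying part~(1) of Theorem~\ref{hwt change 2} to $\mc B$ (which is of $\Ydown$-shape), their $\mc B$-highest weights are $\la^{\mc B}$ and $\la^{\mc B}_-$ respectively. Substituting the signs, permutations and block Frobenius coordinates computed above into the definitions of $\la^{\mc B}$ and $\la^{\mc B}_-$, and then translating back via $\vep_j=\ep_{-m+j-1}$, $\delta_i=\ep_{n-i+1/2}$, gives exactly the expression~\eqref{B-type:finite:hw1}, where the $\pm$ sign in front of $\max\{\la'_m-n,0\}\,\ep_{-m}$ corresponds to the two distinct modules (equivalently, the two admissible values of $\eta_m$ at the fork).

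The argument is essentially bookkeeping, with the substantive content already contained in Theorem~\ref{hwt change 2}; the only delicate point is to verify that the ambiguous fork sign $\eta_m$ pairs up correctly with the two families $L'(\osp(2m|2n),\la^\#)$ and $L'(\osp(2m|2n),\la^\#_-)$, so that the two choices of $\mc B$-extremal weight are uniformly recorded by the single "$\pm$" in~\eqref{B-type:finite:hw1}.
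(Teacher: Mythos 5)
Your proposal is correct and follows exactly the route the paper intends: the paper presents \corref{aux:finite2} as an immediate consequence of \lemref{hwt:standard:spo} and \thmref{hwt change 2}, and your argument is precisely the bookkeeping needed to deduce it from those results (translating the $\SG_n^\dd$ diagram into the signed $\vep\delta$-sequence, computing $r=1$, $d_1=n$, $e_1=m$, $s(i)=i$, $t(j)=m-j+1$, $\xi_i=-1$, $\eta_j=-1$ for $j<m$, and reading off $p_i=\la_i$, $q_j=\max\{\la'_j-n,0\}$). Your verification that $\la^{\mc B}$ and $\la^{\mc B}_-$ together account for both signs in the ``$\pm$'' of \eqref{B-type:finite:hw1}, independently of the parity-normalized value of $\eta_m$, closes the only point left implicit in the paper.
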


\begin{rem}\label{finite:dim:character}
From Corollaries \ref{aux:finite1} and \ref{aux:finite2} it follows that, after passing
to the central extension $\SG_n$ on which the center $K$ acts as a scalar multiplication
by $d\in\Z$, the weights in \eqnref{B-type:finite:hw2} and \eqnref{B-type:finite:hw1}
lie in $\bar{P}^+_n$ whenever $d\le -\la_1$. Hence, \thmref{character} and
\lemref{lem:trunc} provide a complete solution to the {\em finite dimensional}
irreducible character problem for the ortho-symplectic Lie superalgebras.
\end{rem}

\begin{rem}
Recall \cite{BR,S} that finite dimensional irreducible {\em polynomial}
$\gl(n|m)$-modules are exactly the highest weight modules $L'(\gl(n|m), \la^\#)$  with
respect to the standard Borel subalgebra parametrized by $(n|m)$-hook partitions $\la$.
One can assign to any Borel subalgebra $\mc{B}$ of $\gl(n|m)$ an $\vep\delta$ sequence as
in \secref{sec:type B}, but now with $\xi_i=\eta_j=1$, for all $i,j$. By the same
argument as for Theorem~\ref{hwt change},  we can show that the highest weights of the
polynomial representations of $\gl(n|m)$ with respect to $\mc{B}$ is given by $\la^{\mc
B} =\sum_{i=1}^n p_i \delta_{s(i)} +\sum_{j=1}^m q_j \vep_{t(j)}. $
\end{rem}
\subsection{}
%\begin{rem}
\label{opposite:borel}
By flipping from the left to right the Dynkin diagram of
\makebox(23,0){$\oval(20,12)$}\makebox(-20,8){$\mf{k}^\xx$} and changing all
the simple roots therein to their opposites, we obtain a Dynkin diagram\
\makebox(23,0){$\oval(20,12)$}\makebox(-20,8){$^\texttt{o}\mf{k}^\xx$}
corresponding to the opposite Borel subalgebras, where
$\mf{x=b,b^\bullet,c,d}$. Similarly, by flipping
the Dynkin diagrams
\makebox(23,0){$\oval(20,12)$}\makebox(-20,8){$\mf{T}_n$},
\makebox(23,0){$\oval(20,14)$}\makebox(-20,8){$\ov{\mf{T}}_n$} and
\makebox(23,0){$\oval(20,15)$}\makebox(-20,8){$\wt{\mf{T}}_n$} and changing
all signs of the simple roots for $n\in\N\cup\{\infty\}$, we obtain the Dynkin diagrams
\makebox(23,0){$\oval(20,12)$}\makebox(-20,8){$^\texttt{o}\mf{T}_n$},
\makebox(23,0){$\oval(20,14)$}\makebox(-20,8){$^\texttt{o}\ov{\mf{T}}_n$}
and
\makebox(23,0){$\oval(20,15)$}\makebox(-20,8){$^\texttt{o}\wt{\mf{T}}_n$},
respectively, of the opposite Borel subalgebras.
We form the diagrams corresponding to the Borel
subalgebras opposite to \eqnref{Dynkin:combined} as follows:
\begin{equation*}\label{Dynkin:combined:op}
\hskip -3cm \setlength{\unitlength}{0.16in}
\begin{picture}(24,1)
\put(5.0,0.5){\makebox(0,0)[c]{{\ovalBox(1.6,1.2){$^\texttt{o}\mf{T}_n$}}}}
\put(5.8,0.5){\line(1,0){1.85}}
\put(8.5,0.5){\makebox(0,0)[c]{{\ovalBox(1.6,1.2){$^\texttt{o}\mf{k}^\xx$}}}}
\put(15,0.5){\makebox(0,0)[c]{{\ovalBox(1.6,1.2){$^\texttt{o}\ov{\mf{T}}_n$}}}}
\put(15.8,0.5){\line(1,0){1.85}}
\put(18.5,0.5){\makebox(0,0)[c]{{\ovalBox(1.6,1.2){$^\texttt{o}\mf{k}^\xx$}}}}
\put(25,0.5){\makebox(0,0)[c]{{\ovalBox(1.6,1.2){$^\texttt{o}\wt{\mf{T}}_n$}}}}
\put(25.8,0.5){\line(1,0){1.85}}
\put(28.5,0.5){\makebox(0,0)[c]{{\ovalBox(1.6,1.2){$^\texttt{o}\mf{k}^\xx$}}}}
\end{picture}
\end{equation*}
The corresponding Lie superalgebras are again $\G$, $\SG$ and
$\DG$, respectively.

The arguments in Sections \ref{sec:O} and \ref{sec:character} can
be adapted easily to allow us to compare correspondingly defined
parabolic categories $^\texttt{o}\mc{O}$, $^\texttt{o}\ov{\mc{O}}$
and $^\texttt{o}\wt{\mc{O}}$ using these opposite Borel
subalgebras, whose precise definitions are evident. We note that
for the corresponding set of weights $^\texttt{o}P^+$ of the form
\begin{align*}
\sum_{i=1}^{m}\la_{i}\epsilon_{-i} -
\sum_{j\in\N}\la^+_{j}\epsilon_{j} + d\La_0,\quad d\in\C,
\end{align*}
to satisfy the corresponding {dominant condition} we require, besides the obvious
dominant condition on the standard Levi subalgebra of
\makebox(23,0){$\oval(20,12)$}\makebox(-20,8){$^\texttt{o}\mf{k}^\xx$}, also that
$\la^+=(\la^+_1,\la^+_2,\ldots)$ is a partition. This allows us to prove an analogous
version of \thmref{character} and thus to compute irreducible characters of Lie
superalgebras in terms of irreducible characters of Lie algebras. Also the results in
Section~ \ref{sec:homology} and section~ \ref{sec:category} have fairly straightforward
analogues in $^\texttt{o}\mc{O}$, $^\texttt{o}\ov{\mc{O}}$ and $^\texttt{o}\wt{\mc{O}}$
as well.  In particular, we can prove equivalences of the corresponding finitely
generated module subcategories following the strategy of \secref{sec:category}.

Besides of its own interest, another virtue of this opposite version of super duality
lies in the ease of calculation of finite dimensional irreducible characters of modules
over the finite dimensional ortho-symplectic Lie superalgebras.  As the highest weight
modules over $\SG$ in this setup already have highest weights over the standard Borel
subalgebras, the knowledge of extremal weights for finite dimensional irreducible modules
is no longer needed to imply that solution of the irreducible character problem in the
category $^\texttt{o}\ov{\mc{O}}$ and $^\texttt{o}\ov{\mc{O}}_n$ also solves the finite
dimensional irreducible character problem.
%\end{rem}

\bigskip
\frenchspacing

\end{document}